\documentclass[10pt,letterpaper,leqno,pdftex]{amsart}
\usepackage[all]{xy}
\usepackage[mathscr]{euscript}
\usepackage{amsmath, amsfonts, amssymb, mathrsfs, stmaryrd}
\usepackage{appendix}
\usepackage{amsthm}
\usepackage{comment}
\usepackage{bm}
\usepackage{lipsum}
\setlength{\oddsidemargin}{.1in}
\setlength{\evensidemargin}{.1in}
\setlength{\textwidth}{6.3in}
\setlength{\topmargin}{.1in}
\setlength{\headsep}{.2in}
\setlength{\textheight}{8.1in}

\newcommand{\sm}{\smallsetminus}

\newcommand{\Oo}{\mathcal{O}}

\newcommand{\AAa}{\mathscr{A}}

\newcommand{\CC}{\mathbb{C}}
\newcommand{\RR}{\mathbb{R}}

\newcommand{\ZZ}{\mathbb{Z}}

\newcommand{\ZZZ}{\mathscr{Z}}
\newcommand{\DDD}{\mathscr{D}}

\newcommand{\MMM}{\mathscr{M}}

\newcommand{\OOO}{\mathscr{O}}

\newcommand{\VVV}{\mathscr{V}}
\newcommand{\III}{\mathscr{I}}

\DeclareMathOperator{\id}{id}

\DeclareMathOperator{\Cl}{Cl}

\DeclareMathOperator{\Hom}{Hom}

\DeclareMathOperator{\im}{im}

\DeclareMathOperator{\Spec}{Spec}
\DeclareMathOperator{\Max}{Max}

\DeclareMathOperator{\PP}{P}

\newcommand{\frakB}{\mathfrak B}

\newcommand{\frakc}{\mathfrak c}

\newcommand{\frakb}{\mathfrak b}
\newcommand{\frakm}{\mathfrak m}
\newcommand{\frakM}{\mathfrak M}

\newcommand{\fraka}{\mathfrak a}

\newcommand{\frakQ}{\mathfrak Q}

\newcommand{\frakp}{\mathfrak p}
\newcommand{\frakq}{\mathfrak q}

\newcommand{\frakP}{\mathfrak P}

\newcommand{\frakd}{\mathfrak d}

\newcommand{\dlim}{\varinjlim}

\theoremstyle{plain}
\newtheorem{theorem}{Theorem}[section]
\newtheorem{corollary}[theorem]{Corollary}
\newtheorem{lemma}[theorem]{Lemma}

\newtheorem{proposition}[theorem]{Proposition}

\theoremstyle{definition}
\newtheorem{definition}[theorem]{Definition}

\newtheorem{example}[theorem]{Example}

\newtheorem{remark}[theorem]{Remark}
\numberwithin{equation}{section}

\theoremstyle{plain}
\newtheorem*{theorem*}{Theorem}

\theoremstyle{plain}
\newtheorem*{theorem 1}{Theorem 1}

\theoremstyle{plain}
\newtheorem*{theorem 2}{Theorem 2}

\theoremstyle{plain}
\newtheorem*{theorem 3}{Theorem 3}

\theoremstyle{plain}
\newtheorem*{theorem 4}{Theorem 4}

\begin{document}
\title{Arithmetic Functions on a Dedekind Domain}
\author[Andrew Phillips]{Andrew Phillips}

\address{Department of Mathematics and Physical Sciences, College of Idaho, Caldwell, ID 83605}
\email{aphillips1@collegeofidaho.edu}

\maketitle

\begin{abstract}
We study functions from a unique factorization monoid to a field. The set of all such functions is a commutative ring isomorphic to a ring of formal power series over the field, with indeterminates indexed by the prime elements of the monoid. The set of all totally multiplicative functions on the monoid of integral ideals in a Dedekind domain has a ringed space structure, which, after identifying functions with the same prime ideal zeros, determines the Dedekind domain up to isomorphism.
\end{abstract}

\section{Introduction}
An arithmetic function is any function $\ZZ^+ \to \CC$, but it usually contains some kind of number-theoretic information, like the Euler-$\varphi$ function $\varphi(n) = \#(\ZZ/n\ZZ)^\times$ or the divisor functions $\sigma_\alpha(n) = \sum_{d \mid n}d^\alpha$. Let $\AAa$ be the set of all arithmetic functions. If $f, g \in \AAa$ then the sum $f + g$ is defined pointwise and there is a product $f \ast g$, called the Dirichlet product, given by
$$
(f\ast g)(n) = \sum_{d \mid n}f(d)g(n/d),
$$
where the sum is over all positive divisors of $n$. Then $\AAa$ is a commutative ring with multiplicative identity $I$ defined by $I(1) = 1$ and $I(n) = 0$ for all $n > 1$ (see \cite[Chapter 2]{Apostol}).

The structure of the ring $\AAa$ was first described in \cite{CE}. Let $\CC\llbracket X_1, X_2, \ldots\rrbracket$ be the ring of formal power series in a countable number of indeterminates $X_i$. Elements of $\CC\llbracket X_1, X_2, \ldots\rrbracket$ are formal sums
$$
\sum_{k=1}^{\infty}\alpha_kX_1^{n_1(k)}X_2^{n_2(k)}\cdots,
$$
where $\alpha_k \in \CC$ and $n_i(k) \geq 0$ are integers, all but finitely many equal to zero for each index $k$. So the homogeneous part of a given degree of such a series may have infinitely many terms, but each monomial only contains finitely many variables. Now let
$p_k$ be the $k$-th prime number, in ascending order, and define a map $\Phi : \AAa \to\CC\llbracket X_1, X_2, \ldots\rrbracket$ by
$$
\Phi(f) = \sum_{k = p_1^{n_1}p_2^{n_2}\cdots \in \ZZ^+}f(k)X_1^{n_1}X_2^{n_2}\cdots.
$$
It is shown in \cite[Section 14]{CE} that $\Phi$ is an isomorphism of rings. It is also proved in \cite{CE} that $\AAa \cong \CC\llbracket X_1, X_2, \ldots\rrbracket$ is a UFD.

In the first part of this paper we consider the following generalization of this. Let $M$ be a unique factorization monoid, written multiplicatively, satisfying $M^\times = \{1\}$, and let $K$ be a field. An \textit{arithmetic function} on $M$ is any function $M \to K$. Let $\AAa(M)$ be the set of all arithmetic functions on $M$. Just like for $\AAa$, if $f, g \in \AAa(M)$, the sum $f + g$ is defined pointwise and $f \ast g$ is defined by
$$
(f \ast g)(a) = \sum_{d \mid a}f(d)g(d^{-1}a).
$$
As before, $\AAa(M)$ is a commutative ring with identity $e$ defined by $e(1) = 1$ and $e(a) = 0$ for all $a \neq 1$. Let $\{X_p\}$ be a collection of indeterminates, one for each prime element of $M$, and let $K\llbracket (X_p)_p\rrbracket$
be the power series ring in the $X_p$'s, defined as above in the countable case. Our first main result is the following.

\begin{theorem}
There is an isomorphism of $K$-algebras $\AAa(M) \to K\llbracket (X_p)_p\rrbracket$ which is functorial in $M$. In particular, $\AAa(M)$ is a {\upshape UFD}. Also, there is a discrete valuation on $\AAa(M)$ making it into a complete metric space.
\end{theorem}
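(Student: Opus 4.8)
The plan is to define the comparison map $\Phi$ by the same recipe used in the countable case, check it is a $K$-algebra isomorphism, and then read off the last two assertions from the structure of the target together with its degree filtration. Since $M$ has unique factorization and $M^\times=\{1\}$, every $a\in M$ equals $\prod_p p^{n_p(a)}$ with all but finitely many $n_p(a)=0$, and $a\mapsto\prod_p X_p^{n_p(a)}$ is a bijection of $M$ onto the set of monomials. So I would set $\Phi(f)=\sum_{a\in M}f(a)\prod_p X_p^{n_p(a)}$, which is at once a $K$-linear bijection sending $e$ to $1$; the only step with content is multiplicativity. The coefficient attached to $a$ in $\Phi(f)\Phi(g)$ is $\sum_{bc=a}f(b)g(c)$, which is a \emph{finite} sum because $a$ has only finitely many divisors in $M$ (so the product is a legitimate power series at all), and it equals $(f\ast g)(a)$ because the divisors of $a$ are exactly the $b$ with $b\mid a$. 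For functoriality one fixes the category of unique factorization monoids with trivial unit group whose morphisms $\phi\colon M\to N$ are the monoid maps carrying primes to primes; such a $\phi$ induces $\AAa(M)\to\AAa(N)$ (extend a function by $0$ off $\phi(M)$) and a $K$-algebra map $K\llbracket(X_p)_p\rrbracket\to K\llbracket(X_q)_q\rrbracket$ by $X_p\mapsto X_{\phi(p)}$, and $\Phi$ visibly intertwines these, so the naturality square commutes by inspection.

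For the valuation, let $\Omega(a)=\sum_p n_p(a)$ and put $v(f)=\min\{\Omega(a):f(a)\neq 0\}$, with $v(0)=\infty$; under $\Phi$ this is the least total degree occurring in $\Phi(f)$. Then $v(f+g)\geq\min(v(f),v(g))$ is trivial, and I would prove $v(fg)=v(f)+v(g)$ as follows: the inequality $\geq$ is clear, and the degree-$(v(f)+v(g))$ homogeneous part of $fg$ is exactly $FG$, where $F,G$ are the lowest-degree homogeneous parts of $f$ and $g$; and $FG\neq 0$ because, choosing finitely many indeterminates that support a monomial of $F$ and a monomial of $G$ and setting all other indeterminates to $0$ (a ring homomorphism onto a power series ring in finitely many variables), $F$ and $G$ have nonzero images in a polynomial ring over $K$, which is a domain. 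Hence $\AAa(M)$ is an integral domain, $v$ extends to a genuine discrete valuation on its fraction field (already surjective onto $\ZZ$ on the indicator functions of the $p^n$), and $d(f,g)=2^{-v(f-g)}$ is an ultrametric for which a sequence is Cauchy precisely when its values at each $a\in M$ are eventually constant; those eventual values define the limit, so $\AAa(M)$ is complete.

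Finally, by the isomorphism it suffices to show $R=K\llbracket(X_p)_p\rrbracket$ is a UFD. The valuation $v$ gives the ascending chain condition on principal ideals: along $(f_1)\subseteq(f_2)\subseteq\cdots$ the integers $v(f_i)$ decrease and stabilize, after which each successive quotient has $v=0$ hence is a unit (a power series with nonzero constant term is invertible), so $R$ is atomic and it remains to prove every irreducible is prime. When $M$ has finitely many primes, $R=K\llbracket X_1,\ldots,X_n\rrbracket$ is a regular local ring and so a UFD. In general I would reduce to this case: given $f$ with $v(f)=d$, a triangular change among the finitely many indeterminates occurring in a single monomial of its leading form makes $f$ regular of order $d$ in one variable $X_q$, after which a Weierstrass preparation theorem writes $f$ as a unit times a degree-$d$ polynomial in $X_q$ over $K\llbracket(X_p)_{p\neq q}\rrbracket$, allowing an induction. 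Setting up that preparation theorem is the step I expect to be the main obstacle: $K\llbracket(X_p)_{p\neq q}\rrbracket$ is not complete for the adic topology normally used to run the Weierstrass division algorithm once infinitely many indeterminates are present, so the division must instead be carried out and shown to converge using completeness with respect to $v$.
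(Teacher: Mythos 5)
Your construction of $\Phi$, the check of multiplicativity via the finitely many divisor pairs of each $a$, the total-degree valuation $v$ (the paper's $w\circ\Phi$), the product formula $v(fg)=v(f)+v(g)$, and the completeness argument (Cauchy iff eventually constant at each $a\in M$) are all correct and essentially identical to the paper's proof. One caveat on functoriality: your covariant pushforward along prime-preserving monoid maps, "extend by $0$ off $\phi(M)$," is not well defined unless $\phi$ is injective and is not a ring map in general; the paper's precise claim is only naturality of $\Phi$ with respect to \emph{isomorphisms} $M\to N$, via the contravariant $\varphi^\ast(g)=g\circ\varphi$ and the variable substitution $X_{\varphi^{-1}(q)}\leftrightarrow Y_q$, which your intertwining observation does cover, so this is a presentational slip rather than a fatal one.

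The genuine gap is the UFD claim when $\PP(M)$ is infinite. The paper settles the finite case by quoting that $K\llbracket X_1,\ldots,X_n\rrbracket$ is a UFD and the infinite case by citing Nishimura's theorem that the full Bourbaki-sense power series ring in arbitrarily many indeterminates over a field is a UFD (Cashwell--Everett in the countable case); this is a nontrivial theorem, not a routine extension of Weierstrass preparation. Your ACCP argument via $v$ does give atomicity, but "irreducible implies prime" is exactly the hard part, and your sketch does not close it: even granting a Weierstrass division adapted to $v$ (which you concede you have not established, and whose usual proof uses an adic completeness that fails here), preparing $f$ with respect to one variable $X_q$ leaves a Weierstrass polynomial over $K\llbracket(X_p)_{p\neq q}\rrbracket$, a coefficient ring that still has infinitely many indeterminates; the induction on the number of variables that drives the classical finite-variable proof therefore never terminates, and you cannot instead reduce to finitely many variables elementwise, because an element of this ring may involve infinitely many indeterminates (infinitely many monomials in each degree). (Your regularizing change of variables also tacitly needs $K$ infinite, a minor point by comparison.) So this step needs either a real proof along the lines of Cashwell--Everett/Nishimura or, as in the paper, a citation of that result.
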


We make precise below what the functorial condition means. Now let $A$ be a Dedekind domain and $I_A$ be the set of all nonzero integral ideals of $A$. We will write $\AAa(A)$ for $\AAa(I_A)$. The second part of this paper is concerned with totally multiplicative arithmetic functions on $A$, that is, any nonzero $f \in \AAa(A)$ satisfying $f(\fraka\fraka') = f(\fraka)f(\fraka')$ for all $\fraka, \fraka' \in I_A$. Let $X = \MMM(A)$ be the set of all totally multiplicative arithmetic functions on $A$. It is natural to ask: to what extent does $X$ determine $A$? To answer this, we will develop a theory similar to that of affine varieties or schemes for the space $X$. There is a Zariski-type topology on $X$ with closed sets $$\VVV(S) = \{f \in X : \text{$f((a)) = 0$ for all $a \in S$}\},$$ where $S \subset A \sm \{0\}$. Analogous to affine schemes, the open sets
$\DDD(a) = X \sm \VVV(\{a\})$ for $a \in A \sm\{0\}$ form a basis for this topology. 

\begin{theorem}
There is a sheaf of rings $\OOO_X$ on $X$ determined by $\OOO_X(\DDD(a)) = A[1/a]$.
\end{theorem}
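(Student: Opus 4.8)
The plan is to construct $\OOO_X$ first on the basic opens $\DDD(a)$ and then to extend it by the standard fact that a sheaf on a basis extends uniquely to a sheaf on the whole space. The first step is to describe the points and the basic opens explicitly. Any totally multiplicative $f \in X$ satisfies $f(1) = f(1)^2$, and $f(1) = 0$ would force $f \equiv 0$; so $f(1) = 1$, and unique factorization of ideals shows $f$ is determined by its values $f(\frakp)$ at the nonzero primes, these values being arbitrary elements of $K$. Thus, as sets, $X \cong \operatorname{Map}(\Max(A), K)$. Writing $(a) = \frakp_1^{e_1}\cdots\frakp_r^{e_r}$ we get $f((a)) = \prod_j f(\frakp_j)^{e_j}$, so $f \in \DDD(a)$ precisely when $f(\frakp) \neq 0$ for every prime $\frakp \supseteq (a)$. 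Hence $\DDD(a)$ depends only on the finite set $V(a) := \{\frakp : \frakp \supseteq (a)\}$ (equivalently, on $\rad((a))$), one has $\DDD(a) \cap \DDD(b) = \DDD(ab)$, and
\[
\DDD(a) \subseteq \DDD(b) \iff V(b) \subseteq V(a) \iff b \in A[1/a]^{\times},
\]
the last condition because $V(b) \subseteq V(a)$ is equivalent to $a^n \in (b)$ for some $n \geq 1$. (For "$\Rightarrow$" in the first equivalence: if $\frakp \in V(b) \sm V(a)$, the $f$ vanishing exactly at $\frakp$ lies in $\DDD(a) \sm \DDD(b)$.)

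Next I would realize $A[1/a]$ as the subring $\{c/a^n : c \in A,\ n \geq 0\}$ of $\operatorname{Frac}(A)$ and put $\OOO_X(\DDD(a)) := A[1/a]$. This is well defined on the basis: if $\DDD(a) = \DDD(b)$ then $a^n \in (b)$ and $b^m \in (a)$ for some $m, n$, so $A[1/a]$ and $A[1/b]$ coincide as subrings of $\operatorname{Frac}(A)$. If $\DDD(a) \subseteq \DDD(b)$ then $A[1/b] \subseteq A[1/a]$ inside $\operatorname{Frac}(A)$, and I take this inclusion as the restriction map; functoriality on the basis is automatic since all restrictions are inclusions of subrings of a fixed field. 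In particular $\DDD(1) = X$ and $\OOO_X(X) = A$.

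The crux — and the step I expect to be the main obstacle — is the following fact about covers: \emph{if $\DDD(a) = \bigcup_{i \in J}\DDD(a_i)$, then $\DDD(a_i) = \DDD(a)$ for some $i$.} The idea is to use the freedom that a totally multiplicative function takes arbitrary values on primes. Let $f^{\ast} \in X$ be the totally multiplicative function with $f^{\ast}(\frakp) = 1$ for $\frakp \supseteq (a)$ and $f^{\ast}(\frakp) = 0$ for all other primes. Then $f^{\ast}((a)) = 1 \neq 0$, so $f^{\ast} \in \DDD(a)$, hence $f^{\ast} \in \DDD(a_i)$ for some $i$; this forces $f^{\ast}(\frakp) \neq 0$ for every $\frakp \supseteq (a_i)$, i.e.\ $V(a_i) \subseteq V(a)$, and combined with $\DDD(a_i) \subseteq \DDD(a)$ (hence $V(a) \subseteq V(a_i)$) we get $V(a_i) = V(a)$, so $\DDD(a_i) = \DDD(a)$. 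Conceptually this says the topology on $X$ is coarse enough that no basic open decomposes into strictly smaller basic opens; it is this geometric fact that replaces the usual partition-of-unity computation in the affine case.

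Granting the cover fact, the sheaf axioms for $\OOO_X$ on the basis are immediate. In any basic cover $\DDD(a) = \bigcup_i \DDD(a_i)$ one term $\DDD(a_{i_0})$ equals $\DDD(a)$, with $\OOO_X(\DDD(a_{i_0})) = \OOO_X(\DDD(a))$ and identity restriction, so separation and gluing both collapse: a section restricting to $0$ on each $\DDD(a_i)$ already restricts to $0$ on $\DDD(a_{i_0}) = \DDD(a)$, and a compatible family $(s_i)$ is glued by $s := s_{i_0}$, whose restriction to each $\DDD(a_i) \subseteq \DDD(a_{i_0})$ equals $s_i$ by compatibility with the $i_0$-term. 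I then invoke the theorem that a sheaf on a basis extends uniquely to a sheaf of rings on $X$ (explicitly, $\OOO_X(U) := \varprojlim_{\DDD(a) \subseteq U} A[1/a]$), obtaining $\OOO_X$ with $\OOO_X(\DDD(a)) = A[1/a]$; here $\OOO_X(\emptyset) = 0$ automatically, since $\emptyset$ contains no $\DDD(a)$ (every $\DDD(a)$ contains $f^{\ast}$). The uniqueness in that theorem is precisely the sense in which $\OOO_X$ is "determined by" $\OOO_X(\DDD(a)) = A[1/a]$.
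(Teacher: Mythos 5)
Your construction is correct, but it takes a genuinely different route from the paper. The paper never builds $\OOO_X$ on the basis: it first exhibits a topological embedding $F : \Spec(A) \to \MMM(A)$, $\frakp \mapsto f_\frakp$ (where $f_\frakp$ vanishes exactly at $\frakp$), and then simply defines $\OOO_X := F_\ast\OOO_{\Spec(A)}$, so that $\OOO_X(\DDD(a)) = \OOO_{\Spec(A)}(F^{-1}(\DDD(a))) = \OOO_{\Spec(A)}(D(a)) = A_a$ is a one-line computation and the sheaf axioms are automatic, since a pushforward of a sheaf is a sheaf. You instead define the assignment $\DDD(a) \mapsto A[1/a]$ directly, check well-definedness and functoriality on the basis, and extend by the sheaf-on-a-basis theorem; the real content in your route is the covering fact that $\DDD(a) = \bigcup_i\DDD(a_i)$ forces $\DDD(a_i) = \DDD(a)$ for some $i$, proved via the function $f^\ast$ supported on the primes dividing $(a)$. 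That fact is essentially the paper's quasi-compactness proposition (if $\DDD(x) \subset \bigcup_i\DDD(y_i)$ then $\DDD(x) \subset \DDD(y_i)$ for some $i$), which the paper proves by a similar construction but never uses for the sheaf itself. Your approach is intrinsic (no reference to $\Spec(A)$) and makes transparent why the gluing axiom is nearly vacuous for this coarse topology, and it correctly pins down the meaning of ``determined by'' via uniqueness of the extension from the basis; the paper's pushforward description buys immediate sheaf axioms and is what it exploits later, e.g.\ in the stalk computation $\OOO_{X,f} = S^{-1}A$ and in defining the sheaf map $\varphi^{\#} = (F_A)_\ast(\varphi^{\natural})$ for a homomorphism of Dedekind domains. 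The two constructions agree on the basic opens, with the same localization restriction maps, hence yield canonically isomorphic sheaves.
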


One limitation of this ringed space is that the stalk $\OOO_{X, f}$ is a local ring only in the case where $f$ has at most one prime ideal zero. The space $X$ is too large to 
determine $A$, so we pass to a quotient defined in the following way. Define an equivalence relation on $X$ by $f \sim g$ if $f$ and $g$ have the same prime ideal zeros, that is, $f(\frakp) = 0$ if and only if $g(\frakp) = 0$ for all nonzero $\frakp \in \Spec(A)$. Let 
$Y = \MMM_1(A) = X/\hspace{-1.2mm}\sim$, give $Y$ the quotient topology, and let
$\OOO_Y = \pi_\ast\OOO_X$, where $\pi : X \to Y$ is the quotient map. We will show
$\OOO_{Y, \pi(f)} = \OOO_{X, f}$ for any $f \in X$, so again, in general, $(Y, \OOO_Y)$ is not locally ringed. If $\varphi : A \to B$ is a homomorphism of Dedekind domains, there is a corresponding continuous map $\varphi^\ast : \MMM(B) \to \MMM(A)$ given by composing 
$f \in \MMM(B)$ with the extension of ideals from $A$ to $B$. This map $\varphi^\ast$ passes to the quotient to give a continuous map $\MMM_1(B) \to \MMM_1(A)$, so $\MMM_1$ defines a functor from the category of Dedekind domains to the category of sets.

The ringed space $(\MMM_1(A), \OOO_{\MMM_1(A)})$ determines $A$ in the following sense. Let $\mathbf{A}$ be the category whose objects are Dedekind domains and whose morphisms $A \to B$ are injective, integral ring homomorphisms. Let $\mathbf{B}$ be the category whose objects are ringed spaces $(X, \OOO_X)$ and whose morphisms are morphisms of
ringed spaces $(\Phi, \Phi^{\#}) : (X, \OOO_X) \to (Y, \OOO_Y)$ such that
$\Phi^{\#}(Y) : \OOO_Y(Y) \to \OOO_X(X)$ is injective and integral, and such that for each 
$x \in X$, the stalk map $\Phi^{\#}_x : \OOO_{Y, \Phi(x)} \to \OOO_{X, x}$ satisfies the following substitute for the usual local ring condition: for each maximal ideal $\frakm \subset
\OOO_{Y, \Phi(x)}$, the ideal $\Phi_x^{\#}(\frakm)\OOO_{X, x}$ is proper.

\begin{theorem}
The functor $\MMM_1 : \mathbf{A} \to \mathbf{B}$ is fully faithful.
\end{theorem}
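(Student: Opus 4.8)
The plan is to prove faithfulness and fullness separately, in both cases reducing statements about the ringed space $(\MMM_1(A), \OOO_{\MMM_1(A)})$ to statements about the pair $(A, \Spec A)$ via Theorem 2 and the computation $\OOO_{\MMM_1(A)}(\DDD(a)) = A[1/a]$. First I would record the key structural facts: the global sections satisfy $\OOO_{\MMM_1(A)}(\MMM_1(A)) = \OOO_X(X) = A$ (take $a = 1$), and the underlying set $\MMM_1(A)$ is in canonical bijection with the set of subsets of $\Spec A \sm \{0\}$ that arise as the prime-ideal zero locus of some totally multiplicative $f$ — equivalently, as we should check, with \emph{all} subsets of the set of nonzero primes, since one can prescribe the values of a totally multiplicative function freely on primes. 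Under this identification the points of $\MMM_1(A)$ with exactly one prime ideal zero $\frakp$ correspond bijectively to nonzero primes of $A$, and the stalk at such a point is $\OOO_{\MMM_1(A), f} = \varinjlim_{a \notin \frakp} A[1/a] = A_\frakp$, a DVR; I would single these ``one-zero'' points out as a topologically and ring-theoretically distinguished subset $\Sigma_A \subset \MMM_1(A)$.

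For \textbf{faithfulness}: given $\varphi, \psi : A \to B$ in $\mathbf A$ inducing the same morphism of ringed spaces, the induced maps on global sections $\OOO_{\MMM_1(B)}(\MMM_1(B)) \to \OOO_{\MMM_1(A)}(\MMM_1(A))$ are $\varphi$ and $\psi$ themselves under the identification of global sections with $A$ and $B$, so $\varphi = \psi$. This is immediate once the global-sections computation is in hand.

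For \textbf{fullness}: let $(\Phi, \Phi^{\#}) : (\MMM_1(B), \OOO_{\MMM_1(B)}) \to (\MMM_1(A), \OOO_{\MMM_1(A)})$ be a $\mathbf B$-morphism. Set $\varphi := \Phi^{\#}(\MMM_1(A)) : A \to B$, which is injective and integral by hypothesis; I must show $\Phi = \varphi^\ast$ and $\Phi^\# = (\varphi^\ast)^\#$. The crux is to show that $\Phi$ carries the distinguished subset $\Sigma_B$ into $\Sigma_A$ and that on $\Sigma_B \cong \{\text{nonzero primes of }B\}$, $\Phi$ is the map $\frakq \mapsto \varphi^{-1}(\frakq)$. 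This is exactly where the nonstandard ``local'' condition does the work of the usual local-homomorphism hypothesis in the theory of schemes: for a one-zero point $g \in \Sigma_B$ with zero $\frakq$, the stalk map $\Phi^\#_g : \OOO_{\MMM_1(A), \Phi(g)} \to \OOO_{\MMM_1(B), g} = B_\frakq$ sends every maximal ideal $\frakm$ of the source into a proper ideal of $B_\frakq$; since $B_\frakq$ is local, $\Phi^\#_g(\frakm) \subset \frakq B_\frakq$, and compatibility of $\Phi^\#_g$ with the restriction from global sections forces $\frakm \cap A \subset \varphi^{-1}(\frakq)$ (after chasing the commutative square relating $\Phi^\#_g$, $\varphi$, and the localization maps $A \to \OOO_{\MMM_1(A),\Phi(g)}$ and $B \to B_\frakq$). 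The source stalk $\OOO_{\MMM_1(A), \Phi(g)}$ is a localization of $A$ — it is $A$ localized at the complement of the union of the primes in the zero locus of $\Phi(g)$ — and I would argue that, because its contraction to $A$ under the above square lands in the prime $\varphi^{-1}(\frakq)$ while the properness condition must hold for \emph{every} maximal ideal, the zero locus of $\Phi(g)$ must be the single prime $\varphi^{-1}(\frakq)$; hence $\Phi(g) \in \Sigma_A$ and $\Phi(g)$ corresponds to $\varphi^{-1}(\frakq)$. Extending from $\Sigma_B$ to all of $\MMM_1(B)$: a general point $h$ has zero locus a set $T$ of primes of $B$, and the topology together with the sheaf condition should pin down $\Phi(h)$ from its behavior on $\Sigma_B$; concretely I would show $h$ lies in $\DDD(b)$ iff $b \notin \bigcup_{\frakq \in T}\frakq$, use that $\Phi$ is continuous and $\Phi^\#$ is compatible with the $A[1/a] \hookrightarrow B[1/\varphi(a)]$ maps to identify $\Phi^{-1}(\DDD(a)) = \DDD(\varphi(a))$, and conclude the zero locus of $\Phi(h)$ is $\{\varphi^{-1}(\frakq) : \frakq \in T\}$ — i.e. $\Phi = \varphi^\ast$ on the nose. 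Finally, $\Phi^\# = (\varphi^\ast)^\#$ follows because both are the sheafification of $a \mapsto \varphi(a)$ on the basis $\{\DDD(a)\}$: on $\DDD(a)$ each is the canonical map $A[1/a] \to B[1/\varphi(a)]$ induced by $\varphi$, and the only thing to check is that $\Phi^\#(\DDD(a))$ \emph{is} that canonical map, which follows from its compatibility with the restriction to global sections $A \to B$ together with injectivity of $B \hookrightarrow B[1/\varphi(a)]$ (valid since $B$ is a domain).

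The main obstacle I anticipate is the middle step of the fullness argument: extracting, purely from the properness-of-maximal-ideals condition on the stalk maps at one-zero points, that $\Phi$ \emph{preserves} the distinguished subset $\Sigma$ (rather than merely mapping one-zero points to points with a possibly larger zero locus). The difficulty is that the source stalk $\OOO_{\MMM_1(A), \Phi(g)}$ is in general a semilocal-type localization of $A$ with several maximal ideals, and one must rule out the zero locus of $\Phi(g)$ containing more than one prime; I expect this to come down to a careful analysis of the commutative square
\[
\begin{array}{ccc}
A & \longrightarrow & B \\
\downarrow & & \downarrow \\
\OOO_{\MMM_1(A), \Phi(g)} & \longrightarrow & B_\frakq
\end{array}
\]
and the observation that integrality of $\varphi$ forces a going-up behavior incompatible with a multi-prime zero locus surviving the properness constraint — essentially, if $\frakp_1 \neq \frakp_2$ both lay in the zero locus of $\Phi(g)$, the two maximal ideals they generate in $\OOO_{\MMM_1(A),\Phi(g)}$ could not both map into $\frakq B_\frakq$ unless $\varphi^{-1}(\frakq)$ equaled both, a contradiction. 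Everything else is bookkeeping with localizations of Dedekind domains and the basis $\{\DDD(a)\}$.
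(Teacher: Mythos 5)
Your faithfulness argument and your closing identification of $\Phi^{\#}$ with the sheaf map induced by $\varphi$ on the basic opens agree with the paper, but the central step of your fullness argument has a genuine gap: the passage from one-zero points to arbitrary points of $\MMM_1(B)$ cannot be made by ``continuity plus the sheaf condition.'' The space $\MMM_1(A)$ is $T_0$ only when $\Cl(A)$ is torsion (Proposition \ref{basis}(a)); if some $\frakp \in \Max(A)$ has no principal power, then every nonzero element of $\frakp$ lies in some other maximal ideal, so the points with zero loci $\Max(A)$ and $\Max(A)\sm\{\frakp\}$ lie in exactly the same open sets, and no amount of topological information can distinguish where $\Phi$ sends a point. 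Concretely, even if you had the identity $\Phi^{-1}(\DDD_1(a)) = \DDD_1(\varphi(a))$ for all $a$ (which you do not justify --- the inclusion $\DDD_1(\varphi(a)) \subset \Phi^{-1}(\DDD_1(a))$ does not follow from the compatibilities you invoke), for a general point $h$ this only determines the union $\bigcup_{\frakp \in \ZZZ(\Phi(h))}\frakp$, which fails to determine the set $\ZZZ(\Phi(h))$, i.e.\ the point $\Phi(h)$, precisely in the non-torsion case. There is also a smaller gap at the one-zero points themselves: your properness argument shows only $\ZZZ(\Phi(g)) \subset \{\varphi^{-1}(\frakq)\}$ and does not exclude $\ZZZ(\Phi(g)) = \varnothing$, where the stalk is $\text{Frac}(A)$ and the maximal-ideal condition is vacuous (the only maximal ideal is $(0)$). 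Ruling this out needs the (quasi-)integrality hypothesis: if $\OOO_{\MMM_1(A),\Phi(g)} = \text{Frac}(A)$, the commutative square with $\varphi$ would make every $\varphi(a)$, $a \in A^\ast$, a unit of $B_\frakq$, forcing $\varphi^{-1}(\frakq) = (0)$, a contradiction.

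The repair is to drop the reduction to one-zero points and run your stalk argument at \emph{every} $x \in \MMM_1(B)$, which is exactly what the paper does. The stalks are always localizations $\OOO_{\MMM_1(B),x} = S_x^{-1}B$ and $\OOO_{\MMM_1(A),\Phi(x)} = T^{-1}A$ at complements of unions of the zero loci; a nonzero proper ideal of $S_x^{-1}B$ is divisible by some maximal ideal $S_x^{-1}\frakq$ with $\frakq \in \ZZZ(x)$, so your properness-plus-commutative-square argument goes through verbatim without the target being local and yields $\ZZZ(\Phi(x)) \subset \{\varphi^{-1}(\frakq) : \frakq \in \ZZZ(x)\} = \ZZZ(\varphi^\ast(x))$, while quasi-integrality of the stalk map (inherited from $\varphi$ through the same square, as in the paper) gives the reverse inclusion. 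Since a point of $\MMM_1(B)$ \emph{is} its zero locus, this proves $\Phi = \widetilde{\varphi}^\ast$ on points with no topological input, after which your argument identifying $\Phi^{\#}(\DDD_1(a))$ with the canonical map $A_a \to B_{\varphi(a)}$ (the universal property of localization) finishes the proof as in the paper.
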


In particular, if $\varphi: A \to B$ is an injective, integral homomorphism of Dedekind domains such that the corresponding morphism $\MMM_1(B) \to \MMM_1(A)$ is an isomorphism of ringed spaces, then
$\varphi$ is an isomorphism of rings.

\vspace{5mm}

\noindent\textbf{Notation.} All rings considered are commutative. If $A$ is a ring, we write $A^\ast$ for $A \sm \{0\}$ and $A^\times$ for the units of $A$. For a ring $A$ and $x \in A$, we write $A_x$ for the localization $S^{-1}A$, where $S = \{x^n : n \geq 0\}$. If $A$ is a domain, we write $\text{Frac}(A)$ for the fraction field of $A$. We write
$\Max(A)$ for the set of all maximal ideals in a ring $A$, so $\Max(A)  = \Spec(A)\sm\{(0)\}$ if $A$ is a Dedekind domain. We do not consider a field to be a Dedekind domain. Let $K$ be a field, fixed throughout the paper.

\vspace{3mm}

\noindent \textbf{Acknowledgment.} I would like to thank the anonymous referee for helpful suggestions that led to additional results and improvements to the presentation of the paper.

\section{Arithmetic functions}

We begin by reviewing some terminology related to monoids. Let $M$ be a commutative monoid, written multiplicatively, with identity element $1$, and write $M^\times$ for its set of units. For $a, d \in M$, we write $d \mid a$ if $a = bd$ for some $b \in M$. We say
$a \in M \sm M^\times$ is \textit{irreducible} if $a = bc$ for some $b, c \in M$ implies $b$ or $c$ is a unit. An element $p \in M$ is called
\textit{prime} if $p \mid ab$ implies $p \mid a$ or $p \mid b$. A \textit{unique factorization monoid} is a commutative monoid $M$ such that any $a \in M \sm M^\times$ has a factorization $a = b_1\cdots b_r$ for some irreducible elements $b_i \in M$ and if $a = c_1\cdots c_s$ is another such factorization, then $r = s$ and, after reordering, $b_i = u_ic_i$ for some $u_i \in M^\times$. In a unique factorization monoid with $M^\times = \{1\}$, an element is prime if and only if it is irreducible. We will usually use the term ``prime" when discussing a factorization of an element into irreducibles. If $M$ is a unique factorization monoid, we say $a, b \in M$ are \textit{relatively prime} if they have no common prime factors.

\begin{example}
If $R$ is a UFD then $(R^\ast, \cdot)$ is a unique factorization monoid. If $A$ is a Dedekind domain, the set $I_A$ of all nonzero integral ideals in $A$ is a unique factorization monoid. This is the most important example for what follows.
\end{example}

\begin{remark}
To simplify terminology and avoid complications with units, we make the following assumption. In what follows, by ``monoid" we mean a unique factorization monoid $M$, written multiplicatively, with $M^\times = \{1\}$. We also assume every monoid homomorphism $\varphi : M \to N$ satisfies $\varphi(1) = 1$. We write $\PP(M)$ for the set of all prime elements in $M$.
\end{remark}

\begin{definition}
Let $M$ be a monoid. An \textit{arithmetic function} on $M$ is any function $M \to K$.
An arithmetic function $f$ is \textit{multiplicative} if $f(ab) = f(a)f(b)$ for all $a, b \in M$ relatively prime and $f(1) = 1$, and $f$ is \textit{totally multiplicative} if $f(ab) = f(a)f(b)$ for all $a, b \in M$ and $f(1) = 1$.
\end{definition}

Note that a totally multiplicative function $f$ is determined by the values $f(p)$ for $p \in \PP(M)$, so we will often define such an $f$ by just specifying these values,
with the understanding that $f(1) = 1$. Also, the condition $f(1) = 1$ for a multiplicative function $f$ is equivalent to assuming $f$ is not identically $0$. There are certain situations when we will need to evaluate an arithmetic function $f : I_A \to K$, with $A$ a Dedekind domain, at $(0)$, for example when extending an ideal from one Dedekind domain to another, so we define $f((0)) = 0$, when applicable. 
The classical definition of an arithmetic function is the case where $M = \ZZ^+$.

\begin{example} Let $F$ be a number field with ring of integers $\Oo_F$. The function $\varphi_F(\fraka) = \#(\Oo_F/\fraka)^\times$ is a multiplicative arithmetic function on $I_{\Oo_F}$. The ideal norm $\text{N}(\fraka) = \#(\Oo_F/\fraka)$ is totally multiplicative. 
\end{example}

\begin{example}\label{mobius}
Let $M$ be a monoid. Define the M\"obius function $\mu$ by $\mu(1) = 1$ and 
for $a = p_1\cdots p_r \in M$, $\mu(a) = (-1)^r$ if all the primes $p_i$ are distinct, and
$\mu(a) = 0$ otherwise. Then $\mu$ is multiplicative but not totally multiplicative. 
\end{example}

\begin{example}
Let $A$ be a Dedekind domain. The function 
$$
d(\fraka) = \sum_{\frakd \mid \fraka} 1
$$
is multiplicative, where the sum runs over all $\frakd \in I_A$ dividing $\fraka$. This is because if $\fraka$ and $\fraka'$ are relatively prime, then there is a bijection between divisors $\frakd \mid \fraka\fraka'$ and pairs of divisors $\frakb \mid \fraka$ and $\frakc \mid \fraka'$, given by
$\frakd \mapsto (\frakd + \fraka, \frakd + \fraka')$, and in the other direction by $(\frakb, \frakc) \mapsto \frakb\frakc$.
\end{example}

\begin{example}
Let $A = \Oo_F$ for some number field $F$. For each $\alpha \in K$, the function
$$
\sigma_\alpha(\fraka) = \sum_{\frakd \mid \fraka} \text{N}(\frakd)^\alpha
$$
is multiplicative for the same reason as the previous example, using that the ideal norm is multiplicative.
\end{example} 

We write $\AAa(M)$ for the set of all arithmetic functions on a monoid $M$ and $\MMM(M)$ for the set of all totally multiplicative arithmetic functions on $M$. Suppose $\varphi : M \to N$ is a monoid homomorphism. This leads to a map $\varphi^\ast : \AAa(N) \to \AAa(M)$ given by $\varphi^\ast(g) = g\circ \varphi$. In the case of ideals in a Dedekind domain, there is one technical point. If $\theta : A \to B$ is a homomorphism of Dedekind domains, there is an induced multiplicative map $\theta_{\text{e}} : 
I_A  \to I_B\cup\{(0)\}$ given by extending ideals $\theta_{\text{e}}(\fraka) = \theta(\fraka)B$. We still get a map $\theta_{\text{e}}^\ast : \AAa(I_B) \to \AAa(I_A)$ as before, using the convention that $g((0)) = 0$ for any $g \in \AAa(I_B)$.

Suppose $\varphi: M \to N$ and $\psi : N \to L$ are homomorphisms between monoids. Then 
$(\psi \circ \varphi)^\ast = \varphi^\ast \circ \psi^\ast$ and $\text{id}_M^\ast = \text{id}_{\AAa(M)}$, so we obtain a contravariant functor $\AAa$ from the category of monoids to the category of sets. If $a, b \in M$ 
and $g \in \MMM(N)$, then $\varphi^\ast(g)(ab) = (\varphi^\ast(g)(a))(\varphi^\ast(g)(b))$. Also, $\varphi^\ast(g)(1) = g(1) = 1$, so $\varphi^\ast$ restricts to a map
$\MMM(N) \to \MMM(M)$. Since extension of ideals is multiplicative, any homomorphism of Dedekind domains $A \to B$ induces a map
$\MMM(I_B) \to \MMM(I_A)$.

\section{Ring structure}

Let $M$ be a monoid. Define addition in $\AAa(M)$ pointwise: $(f+g)(a) = f(a) + g(a)$ for $f, g \in \AAa(M)$ and $a \in M$. Then $(\AAa(M), +)$ is an abelian group with identity the zero function and inverse of $f \in
\AAa(M)$ equal to $-f$. If $d \mid a$ in $M$, so $a = bd$ for some $b \in M$, we write $d^{-1}a$ for $b$.

\begin{definition}
Define the \textit{Dirichlet product} $f\ast g$ of $f, g \in \AAa(M)$ by
$$
(f \ast g)(a) = \sum_{d \mid a} f(d)g(d^{-1}a) = 
\sum_{bc = a} f(b)g(c),
$$
where the second sum is over all pairs $(b, c) \in M \times M$ such that $bc = a$.
\end{definition}

Also, define $e \in \AAa(M)$ by
$$
e(a) = 
\left\{\begin{array}{ll}
1 & \text{if $a = 1$} \\
0 & \text{if $a \neq 1$}.
\end{array} \right.
$$
The basic properties of the Dirichlet product are summarized in the following result.

\begin{lemma} Let $f, g, h \in \AAa(M)$ and $e \in \AAa(M)$ be defined as above.\\
{\upshape (a)} $f \ast g = g \ast f$ \\
{\upshape (b)} $f\ast(g\ast h) = (f\ast g)\ast h$ \\
{\upshape (c)} $f\ast e = e\ast f = f$ \\
{\upshape (d)} $f\ast(g + h) = (f\ast g) + (f\ast h)$ \\
{\upshape (e)} $(f + g)\ast h = (f\ast h) + (g\ast h)$
\end{lemma}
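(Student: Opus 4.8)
The plan is to verify the five identities directly from the symmetric form $(f\ast g)(a) = \sum_{bc=a} f(b)g(c)$ of the Dirichlet product, evaluating each side at an arbitrary $a \in M$. The first thing to record is that every sum involved is finite: since $M$ is a unique factorization monoid with $M^\times = \{1\}$, an element $a$ with prime factorization $a = p_1^{e_1}\cdots p_r^{e_r}$ has exactly $\prod_i (e_i + 1)$ divisors, each corresponding to an exponent vector $0 \le f_i \le e_i$. Hence the factorizations $bc = a$ are finite in number, all sums below are finite, and we may freely regroup and reindex them.

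For part (a), I would use that the set of pairs $(b,c) \in M\times M$ with $bc = a$ is carried bijectively to itself by the swap $(b,c)\mapsto(c,b)$, since $M$ is commutative. Summing $f(b)g(c)$ over the former and relabeling therefore gives $\sum_{bc=a} g(b)f(c) = (g\ast f)(a)$, using commutativity of multiplication in $K$. Parts (d) and (e) are equally direct: expanding $(f\ast(g+h))(a) = \sum_{bc=a} f(b)\big(g(c)+h(c)\big)$ and distributing in $K$ splits the sum as $(f\ast g)(a) + (f\ast h)(a)$, which is (d); then (e) follows formally by combining (d) with the commutativity from (a). For the identity law (c), I would observe that in $(f\ast e)(a) = \sum_{bc=a} f(b)e(c)$ every term with $c \neq 1$ vanishes by the definition of $e$, and because $M^\times = \{1\}$ the only factorization with $c = 1$ is $(b,c) = (a,1)$; thus the sum collapses to $f(a)e(1) = f(a)$, and $e\ast f = f$ follows from (a).

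The one step needing genuine bookkeeping is associativity (b), and the clean approach is to show that both $f\ast(g\ast h)$ and $(f\ast g)\ast h$ equal the common triple sum $\sum_{bcd=a} f(b)g(c)h(d)$ taken over all ordered triples $(b,c,d)\in M\times M\times M$ with $bcd = a$. Writing $(f\ast(g\ast h))(a) = \sum_{bx=a} f(b)(g\ast h)(x)$ and expanding $(g\ast h)(x) = \sum_{cd=x} g(c)h(d)$, the associativity of multiplication in $M$ lets me merge the two summation indices into the single condition $bcd = a$; the analogous expansion of $((f\ast g)\ast h)(a)$ yields the same triple sum with the factors grouped as $(bc)d$. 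Since both reductions are justified by the finiteness noted above together with associativity in $M$ and $K$, the two sides agree. I expect this reindexing of nested sums to be the only point requiring care; the remaining parts are formal consequences of the monoid and field axioms, mirroring the classical case $M = \ZZ^+$.
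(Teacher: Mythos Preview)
Your proposal is correct and is exactly the standard verification the paper has in mind: the paper's own proof simply states that each identity is an easy calculation identical to the classical case $M = \ZZ^+$ and refers to Apostol, so you have supplied the details that the paper omits. The only minor remark is that your observation about finiteness of the sums, while worth noting, is already implicit in the definition of the Dirichlet product on a unique factorization monoid.
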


\begin{proof} 
Each is an easy calculation, identical to the classical case of $M = \ZZ^+$. See \cite[Section 2.6]{Apostol}.
\end{proof}

\begin{proposition}
The triple $(\AAa(M), +, \ast)$ is a commutative, local ring with multiplicative identity $e$, and is a $K$-algebra via pointwise scaling. Its units are $\AAa(M)^\times = \{f \in \AAa(M) : f(1) \neq 0\}$.
\end{proposition}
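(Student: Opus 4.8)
The plan is to read off the ring axioms from the preceding lemma, check the $K$-algebra structure by a one-line computation, characterize the units by explicitly constructing Dirichlet inverses, and then deduce locality formally.

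\textbf{Ring and algebra structure.} Parts (a)--(e) of the preceding lemma say exactly that $\ast$ is commutative and associative, has two-sided identity $e$, and distributes over $+$ on both sides. Since $(\AAa(M), +)$ is already an abelian group, this gives that $(\AAa(M), +, \ast)$ is a commutative ring with identity $e$. Pointwise scaling $(\lambda f)(a) = \lambda f(a)$ makes $\AAa(M)$ a $K$-vector space, and $(\lambda f) \ast g = \lambda (f \ast g) = f \ast (\lambda g)$ follows by pulling the scalar out of each summand in the definition of $\ast$; hence $\AAa(M)$ is a $K$-algebra.

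\textbf{Units.} If $f$ is a unit, say $f \ast g = e$, then evaluating at $1$ gives $f(1)g(1) = e(1) = 1$, so $f(1) \neq 0$. Conversely, suppose $f(1) \neq 0$. I would build $g \in \AAa(M)$ with $f \ast g = e$ by recursion on the number $\Omega(a)$ of prime factors of $a$ counted with multiplicity, which is finite and well defined by unique factorization (as is the set of divisors of $a$). Put $g(1) = f(1)^{-1}$, and for $a \neq 1$ isolate the $d = 1$ term in $(f \ast g)(a) = \sum_{d \mid a} f(d) g(d^{-1}a)$ and solve $(f \ast g)(a) = 0$ for $g(a)$:
$$
g(a) = -f(1)^{-1} \sum_{\substack{d \mid a \\ d \neq 1}} f(d)\, g(d^{-1}a).
$$
Each $d^{-1}a$ on the right is a proper divisor of $a$, so $\Omega(d^{-1}a) < \Omega(a)$ and only previously defined values occur; thus the recursion is well posed. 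By construction $(f \ast g)(1) = 1 = e(1)$ and $(f \ast g)(a) = 0 = e(a)$ for $a \neq 1$, so $f \ast g = e$, and commutativity gives $g \ast f = e$; hence $f \in \AAa(M)^\times$. This proves $\AAa(M)^\times = \{f : f(1) \neq 0\}$.

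\textbf{Locality.} The map $\varepsilon : \AAa(M) \to K$, $\varepsilon(f) = f(1)$, is a surjective $K$-algebra homomorphism, since $\varepsilon(f+g) = f(1)+g(1)$, $\varepsilon(\lambda f) = \lambda f(1)$, $\varepsilon(f \ast g) = f(1)g(1)$, $\varepsilon(e) = 1$, and $\varepsilon(c\, e) = c$. Its kernel $\frakm = \{f : f(1) = 0\}$ is therefore a maximal ideal, and by the computation of the units, $\AAa(M) \sm \frakm = \AAa(M)^\times$; so $\frakm$ is the unique maximal ideal and $\AAa(M)$ is local. The only step needing care is the recursive construction of the Dirichlet inverse, and the point there is that the hypothesis $M^\times = \{1\}$ on a unique factorization monoid supplies both facts the recursion relies on: each element has only finitely many divisors, and passing to a proper divisor strictly decreases $\Omega$. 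Everything else is a formal consequence of the preceding lemma.
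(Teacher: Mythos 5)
Your proposal is correct and follows essentially the same route as the paper: read the ring axioms off the preceding lemma, characterize units via evaluation at $1$ together with the recursive construction of the Dirichlet inverse (your recursion isolating the $d=1$ term is just a reindexing of the paper's, which isolates $d=a$), and conclude locality from $\AAa(M)^\times = \AAa(M)\sm\frakm$. Your use of the evaluation homomorphism $f \mapsto f(1)$ to see that $\frakm$ is a (maximal) ideal is a minor stylistic variant of the paper's argument, not a different method.
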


\begin{proof}
That $\AAa(M)$ is a commutative $K$-algebra follows from the discussion above. To prove the statement about units, first suppose $f \in \AAa(M)^\times$, so there exists $g \in \AAa(M)$ such 
that $f \ast g = e$. Then
$$
1 = e(1) = \sum_{bc = 1}f(b)g(c) = f(1)g(1),
$$
so $f(1) \neq 0$. Conversely, suppose $f(1) \neq 0$. Using induction on the number of prime divisors of $a$, the formulas $g(1) = 1/f(1)$ and 
$$
g(a) = -\frac{1}{f(1)}\sum_{\substack{d \mid a \\ d \neq a}} g(d)f(d^{-1}a)
$$
uniquely define a function $g$. The proof that $f\ast g = e$ is the same as the case of $A = \ZZ^+$. See \cite[Theorem 2.8]{Apostol}. Finally, to show $\AAa(M)$ is a local ring, let $\frakm = \{f \in \AAa(M) : f(1) = 0\}$, which is easily seen to be an ideal. Since $\AAa(M)^\times = \AAa(M) \sm \frakm$, the ring $\AAa(M)$ is
local with unique maximal ideal $\frakm$.
\end{proof}

We will denote the multiplicative inverse of $f \in \AAa(M)^\times$ by $f^{-1}$.

\begin{proposition}\label{mult}\
\begin{enumerate}
\item[(a)] If $f, g \in \AAa(M)$ are both multiplicative, then $f\ast g$ is multiplicative. 
\item[(b)] If $f \in \AAa(M)^\times$ is multiplicative, then $f^{-1}$ is multiplicative.
\end{enumerate}
\end{proposition}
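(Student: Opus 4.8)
The plan is to follow the classical arguments for arithmetic functions on $\ZZ^+$ verbatim, replacing unique factorization in $\ZZ^+$ by unique factorization in $M$. Two elementary consequences of the UFM structure do all the work. First: the divisors of a prime power $p^k$ are exactly $1, p, \ldots, p^k$. Second: if $a$ and $b$ are relatively prime, then $(d_1, d_2) \mapsto d_1 d_2$ is a bijection from the set of pairs with $d_1 \mid a$ and $d_2 \mid b$ onto the set of divisors of $ab$; moreover for such a pair $d_1$ and $d_2$ are relatively prime, as are $d_1^{-1}a$ and $d_2^{-1}b$, and $(d_1 d_2)^{-1}(ab) = (d_1^{-1}a)(d_2^{-1}b)$. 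I will also use the standard fact, again immediate from unique factorization, that a multiplicative function is determined by its values at $1$ and at the prime powers, and that any prescription of values at the prime powers extends to a (unique) multiplicative function.

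For part (a), I would expand $(f \ast g)(ab) = \sum_{d \mid ab} f(d)g(d^{-1}ab)$ and reindex over pairs $(d_1, d_2)$ via the bijection above. Since $d_1, d_2$ are relatively prime and $d_1^{-1}a, d_2^{-1}b$ are relatively prime, multiplicativity gives $f(d_1 d_2) = f(d_1)f(d_2)$ and $g((d_1 d_2)^{-1}ab) = g(d_1^{-1}a)\,g(d_2^{-1}b)$, so the double sum factors:
$$(f\ast g)(ab) = \Big(\sum_{d_1 \mid a} f(d_1)g(d_1^{-1}a)\Big)\Big(\sum_{d_2 \mid b} f(d_2)g(d_2^{-1}b)\Big) = (f\ast g)(a)\,(f\ast g)(b).$$
Together with $(f\ast g)(1) = f(1)g(1) = 1$, this shows $f \ast g$ is multiplicative.

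For part (b), note first that $e$ is multiplicative: if $a, b$ are relatively prime and not both $1$, then $ab \neq 1$ (otherwise $a$ and $b$ would be units, hence both $1$, as $M^\times = \{1\}$), so $e(ab) = 0 = e(a)e(b)$. Let $h$ be the multiplicative function with $h(p^k) = f^{-1}(p^k)$ for all primes $p$ and all $k \geq 1$. By part (a), $f \ast h$ is multiplicative, and $e$ is multiplicative, so it suffices to check $f \ast h = e$ at $1$ and at every prime power. At $1$, $(f\ast h)(1) = f(1)h(1) = 1 = e(1)$. For $p^k$ with $k \geq 1$, since the divisors of $p^k$ are $1, p, \ldots, p^k$,
$$(f \ast h)(p^k) = \sum_{j=0}^{k} f(p^j)h(p^{k-j}) = \sum_{j=0}^{k} f(p^j)f^{-1}(p^{k-j}) = (f \ast f^{-1})(p^k) = e(p^k) = 0.$$
Hence $f \ast h = e$, so by uniqueness of the Dirichlet inverse $h = f^{-1}$, and since $h$ is multiplicative, so is $f^{-1}$.

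The only genuine obstacle is the unique-factorization bookkeeping behind the two elementary facts above — in particular the divisor bijection in (a) and its compatibility with coprimality and with the division notation $d^{-1}a$. Once these are pinned down, both parts are the short formal manipulations displayed above, identical to the classical case $M = \ZZ^+$ in \cite[Chapter 2]{Apostol}.
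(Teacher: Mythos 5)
Your proposal is correct. Part (a) is exactly the paper's argument: the paper simply cites the classical proof (Apostol, Theorem 2.14), which is the divisor-bijection expansion you spell out, so there is nothing to compare there beyond the level of detail. Part (b), however, takes a genuinely different route. The paper adapts Apostol's Theorem 2.16, a minimal-counterexample argument: assuming $f^{-1}$ is not multiplicative, one chooses relatively prime $a, b$ with $f^{-1}(ab) \neq f^{-1}(a)f^{-1}(b)$ such that $ab$ has the fewest prime divisors (this count replacing the classical ``$mn$ as small as possible,'' since $M$ carries no natural order), and derives a contradiction from the recursion defining the inverse. You instead construct the multiplicative extension $h$ of the values $f^{-1}(p^k)$, invoke part (a) to see $f \ast h$ is multiplicative, check $f \ast h = e$ at $1$ and at prime powers (where the divisor lattice is a chain, so the computation collapses to $(f \ast f^{-1})(p^k)$), and conclude $h = f^{-1}$ by uniqueness of inverses. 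Your route is constructive --- it exhibits the inverse explicitly as a multiplicative extension --- and trades the well-ordering/induction bookkeeping of the paper's proof for two standard facts about multiplicative functions on a unique factorization monoid with trivial units: that they are determined by their values at prime powers, and that any assignment of values at prime powers extends to one. Both of those facts, and the cancellation implicit in the notation $d^{-1}a$, do hold in this setting, so the argument goes through; the paper's adaptation, by contrast, needs no appeal to existence of multiplicative extensions and stays closer to the classical text, which is presumably why the author chose it.
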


\begin{proof}
The proof of (a) is the same as the classical case (\cite[Theorem 2.14]{Apostol}). The proof of
(b) is an easy adaptation of the classical proof in \cite[Theorem 2.16]{Apostol}, where the condition ``choose a pair $m$ and $n$ with $(m, n) = 1$ and $f(mn) \neq f(m)f(n)$ for which $mn$ is as small as possible" there is replaced with ``choose $a$ and $b$ relatively prime and $f(ab) \neq f(a)f(b)$ for which $ab$ has the fewest number of prime divisors".
\end{proof}

\begin{corollary}
If $K$ has characteristic $0$ then
the set $\MMM_0(M)$ of all multiplicative arithmetic functions on $M$ is a torsion-free subgroup of
$\AAa(A)^\times$.
\end{corollary}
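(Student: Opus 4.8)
The plan is to check the two assertions in turn: that $\MMM_0(M)$ is a subgroup of $\AAa(M)^\times$, and that this subgroup is torsion-free.

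For the subgroup claim, I would first observe that every multiplicative $f$ has $f(1) = 1 \neq 0$, so by the description of the units in the preceding proposition we have $\MMM_0(M) \subseteq \AAa(M)^\times$. The identity $e$ is multiplicative, and Proposition \ref{mult} shows that $\MMM_0(M)$ is closed under $\ast$ and under taking $\ast$-inverses; hence it is a subgroup.

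For torsion-freeness, suppose $f \in \MMM_0(M)$ satisfies $f^{\ast n} = e$ for some integer $n \geq 1$, and assume for contradiction that $f \neq e$. Since $f$ is multiplicative with $f(1) = 1$, if $f$ vanished on every prime power $p^k$ with $k \geq 1$ then multiplicativity would force $f = e$; so there is a prime $p$ and some $k \geq 1$ with $f(p^k) \neq 0$. Fix such a $p$ and let $m \geq 1$ be minimal with $f(p^m) \neq 0$. The key step is to compute $(f^{\ast n})(p^m)$: because the only divisors of $p^m$ in $M$ are $1, p, \dots, p^m$, an induction on $n$ gives $(f^{\ast n})(p^m) = \sum f(p^{i_1})\cdots f(p^{i_n})$, the sum ranging over all $(i_1,\dots,i_n)$ with $i_j \geq 0$ and $i_1 + \cdots + i_n = m$. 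By minimality of $m$, every term in which some $i_j$ lies strictly between $0$ and $m$ vanishes, so the only surviving terms are the $n$ terms in which a single $i_j$ equals $m$ and the remaining exponents are $0$; each of these equals $f(p^m)f(1)^{n-1} = f(p^m)$. Hence $(f^{\ast n})(p^m) = n\,f(p^m)$. On the other hand $f^{\ast n} = e$ and $p^m \neq 1$ give $(f^{\ast n})(p^m) = 0$, and since $\charr K = 0$ the integer $n$ is nonzero in $K$; therefore $f(p^m) = 0$, a contradiction. Thus $f = e$, and $\MMM_0(M)$ has no nontrivial torsion.

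I do not expect a serious obstacle; the only point requiring care is the expansion of $(f^{\ast n})(p^m)$ and the observation that minimality of $m$ annihilates every term except the $n$ in which a single exponent equals $m$. Equivalently, one can package this through the Bell series $f_p(T) = \sum_{k\geq 0} f(p^k)\,T^k \in K\llbracket T\rrbracket$, under which $\ast$ corresponds to multiplication in $K\llbracket T\rrbracket$, so that $f^{\ast n} = e$ becomes $f_p(T)^n = 1$; since the group $1 + T\,K\llbracket T\rrbracket$ of power series with constant term $1$ is torsion-free when $\charr K = 0$, this forces $f_p(T) = 1$, i.e. $f(p^k) = 0$ for all $k \geq 1$ and all $p$, and then multiplicativity gives $f = e$.
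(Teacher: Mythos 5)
Your proof is correct and follows essentially the same route as the paper: membership in the units via $f(1)=1$, closure under $\ast$ and inverses via Proposition \ref{mult}, and torsion-freeness by evaluating $f^{\ast n}$ at $p^m$ with $m$ the minimal power where $f(p^m)\neq 0$, which yields $nf(p^m)=0$ and a contradiction in characteristic $0$. The Bell-series reformulation you add is a nice equivalent packaging but not a different argument.
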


\begin{proof}
Any multiplicative $f$ satisfies $f(1) = 1$, so $f \in \AAa(M)^\times$. Also, the identity $e$ is multiplicative, so $\MMM_0(M)$ is a subgroup of $\AAa(M)^\times$. To show it is torsion-free, suppose $f \in \MMM_0(M)$ and $f^n = e$ for some $n \geq 1$. Suppose there exists $p \in \PP(M)$ such that $f(p^k) \neq 0$ for some $k \geq 1$ and let $\ell$ be the smallest such power. Then
$$
0 = f^{n}(p^\ell) = \sum_{b_1\cdots b_n = p^\ell}f(b_1)\cdots
f(b_n) = n f(p^\ell)f(1)^{n -1} = n f(p^\ell),
$$
a contradiction. Hence $f(p^k) = 0$ for all $p$ and all $k \geq 1$, which means $f = e$ since it is multiplicative.
\end{proof} 

\begin{example} From Proposition \ref{mult}(a), if $f \in \AAa(M)$ is multiplicative, then the function
$$
g(a) = \sum_{d \mid a}f(d)
$$
is also multiplicative, since $g = f\ast u$, where $u(a) = 1$ for all $a$ is multiplicative.
\end{example}

\begin{example}
Recall the M\"obius function $\mu$ from Example \ref{mobius}. An easy calculation shows
$\sum_{d \mid a}\mu(d) = 0$ for all $a \neq 1$, so $\mu \ast u = e$, with $u$ as above. If $g = f \ast u$ for some $f \in \AAa(M)$, then $g \ast \mu = f \ast e = f$, which proves the M\"obius inversion formula: for any $f \in \AAa(M)$, define $g \in \AAa(M)$ by $g(a) = \sum_{d \mid a}f(d)$.
Then
$$
f(a) = \sum_{d \mid a}\mu(d^{-1}a)g(d).
$$
\end{example}

\begin{proposition}\label{iso}
If $\varphi : M \to N$ is an isomorphism of monoids, the induced map
$\varphi^\ast : \AAa(N) \to \AAa(M)$ is an isomorphism of local $K$-algebras.
\end{proposition}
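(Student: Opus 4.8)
The plan is to check directly that $\varphi^\ast$ is a bijective $K$-algebra homomorphism and then note that it is automatically a homomorphism of local rings. Recall that $\varphi^\ast(g) = g\circ\varphi$, and that $\varphi^\ast : \AAa(N) \to \AAa(M)$ is already known to be a well-defined map of sets. Since $\varphi$ is an isomorphism of monoids, its inverse $\varphi^{-1} : N \to M$ is again a monoid homomorphism (it fixes $1$ because $\varphi(1)=1$, and it is multiplicative because $\varphi$ is), so the contravariant functoriality of $\AAa$ recorded above gives $\varphi^\ast\circ(\varphi^{-1})^\ast = (\varphi^{-1}\circ\varphi)^\ast = \text{id}_{\AAa(M)}$ and $(\varphi^{-1})^\ast\circ\varphi^\ast = \text{id}_{\AAa(N)}$; hence $\varphi^\ast$ is a bijection.

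Next I would verify compatibility with the algebra operations. Additivity and $K$-linearity are immediate, since both are defined pointwise: $(g+h)\circ\varphi = g\circ\varphi + h\circ\varphi$ and $(\lambda g)\circ\varphi = \lambda(g\circ\varphi)$ for $\lambda\in K$. For the multiplicative identity, $\varphi(a) = 1$ holds if and only if $a = 1$ (using $\varphi(1)=1$ and injectivity of $\varphi$), so $e\circ\varphi$ equals $1$ at $a=1$ and $0$ elsewhere, i.e.\ $\varphi^\ast(e) = e$.

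The one step with any content is compatibility with the Dirichlet product. For $g, h\in\AAa(N)$ and $a\in M$ one expands
$$
\varphi^\ast(g\ast h)(a) = (g\ast h)(\varphi(a)) = \sum_{bc=\varphi(a)} g(b)h(c), \qquad
\bigl(\varphi^\ast(g)\ast\varphi^\ast(h)\bigr)(a) = \sum_{de=a} g(\varphi(d))h(\varphi(e)).
$$
Because $\varphi$ is an isomorphism, the assignment $(d,e)\mapsto(\varphi(d),\varphi(e))$ is a bijection from $\{(d,e)\in M\times M : de=a\}$ onto $\{(b,c)\in N\times N : bc=\varphi(a)\}$, with inverse $(b,c)\mapsto(\varphi^{-1}(b),\varphi^{-1}(c))$, so the two sums agree term by term. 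Thus $\varphi^\ast$ is an isomorphism of $K$-algebras.

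Finally, to obtain an isomorphism of \emph{local} $K$-algebras I would note that $g$ lies in the maximal ideal $\frakm = \{g \in \AAa(N) : g(1)=0\}$ if and only if $\varphi^\ast(g)(1) = g(\varphi(1)) = g(1) = 0$, so $\varphi^\ast(\frakm)$ is the maximal ideal of $\AAa(M)$ (any ring isomorphism between local rings is local in any case). The only real point to watch throughout is that the full isomorphism hypothesis — not merely a bijection of underlying sets, nor an injective homomorphism — is what turns the reindexing of factorization pairs in the Dirichlet-product step into an honest bijection; that is the crux of the argument.
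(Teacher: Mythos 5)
Your proposal is correct and follows essentially the same route as the paper: functoriality gives bijectivity with inverse $(\varphi^{-1})^\ast$, the Dirichlet-product compatibility is handled by the same reindexing bijection $(d,e)\mapsto(\varphi(d),\varphi(e))$ on factorization pairs, and locality is checked via the maximal ideal $\{g : g(1)=0\}$. No gaps.
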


\begin{proof}
Clearly $\varphi^\ast$ is $K$-linear. Next, since $\varphi$ is an isomorphism, given any $a \in M$, there is a multiplicative one-to-one correspondence between pairs $(b, c) \in M \times M$ such that $bc = a$ and pairs
$(x, y) \in N \times N$ such that $xy = \varphi(a)$, given by
$(b, c) \mapsto (\varphi(b), \varphi(c))$, and in the other direction,
$(x, y) \mapsto (\varphi^{-1}(x), \varphi^{-1}(y))$. It follows that for any 
$f, g \in \AAa(N)$ and $a \in M$,
\begin{align*}
[\varphi^\ast(f\ast g)](a) &= (f \ast g)(\varphi(a)) 
= \sum_{xy = \varphi(a)}
f(x)g(y) \\
&= \sum_{\varphi(b)\varphi(c) = \varphi(a)}f(\varphi(b))g(\varphi(c)) \\
&= \sum_{bc = a}(\varphi^\ast f)(b)(\varphi^\ast g)(c) 
= [(\varphi^\ast f)\ast
(\varphi^\ast g)](a),
\end{align*}
so $\varphi^\ast(f \ast g) = (\varphi^\ast f)\ast(\varphi^\ast g)$. We also have $\varphi^\ast(e_ N) = e_M$, so $\varphi^\ast$ is a 
ring homomorphism. From the functorial properties $(\varphi \circ \varphi^{-1})^\ast = (\varphi^{-1})^\ast \circ \varphi^\ast$ and $\id_N^\ast = \id_{\AAa(N)}$, and the similar properties in the other order,
we find $\varphi^\ast$ is an isomorphism with inverse $(\varphi^{-1})^\ast$. Clearly $\varphi^\ast(\frakm_{\AAa(N)}) = \frakm_{\AAa(M)}$, which means $\varphi^\ast$ is a local isomorphism. 
\end{proof}

Given a collection of indeterminates $(X_p)_{p \in \PP(M)}$, one for each 
$p \in \PP(M)$, let $K\llbracket (X_p)_p\rrbracket$ be the ring of formal power series in the variables $(X_p)_p$, with coefficients in $K$. Here we mean the power series ring in the sense of \cite[III, \S 2.10]{Bourbaki1} and \cite[IV, \S 4]{Bourbaki2}, whose elements are defined as follows. Let $\ZZ_{\geq 0}$ be the set of nonnegative integers and $\ZZ_{\geq 0}^{(\PP(M))}$ be the set of all functions $n : \PP(M) \to \ZZ_{\geq 0}$, with $n_p = 0$ for all but finitely many $p 
\in \PP(M)$, where we write $n_p$ for the value at $p$. We denote these functions by $(n_p)$. For $(n_p) \in \ZZ_{\geq 0}^{(\PP(M))}$, if
$n_p > 0$ for $p \in \{p_1, \ldots, p_r\}$, we use the notation
$$
(X_p)^{(n_p)} = X_{p_1}^{n_{p_1}}\cdots X_{p_r}^{n_{p_r}}.
$$
Elements of $K\llbracket (X_p)_p\rrbracket$ are power series
$$
\sum_{(n_p) \in \ZZ_{\geq 0}^{(\PP(M))}} \alpha_{(n_p)} (X_p)^{(n_p)}
$$
where $\alpha_{(n_p)} \in K$. Note that each monomial term contains only finitely many variables, but there may be infinitely many terms of a given degree.

\begin{theorem}\label{isomorphism}
There is an isomorphism of $K$-algebras $\Phi : \AAa(M) \to K\llbracket (X_p)_p\rrbracket$, which is functorial in $M$ in the following sense: if $\varphi : M \to N$ is an isomorphism of monoids, then there is an isomorphism
of $K$-algebras $\psi : K\llbracket (Y_q)_{q \in \PP(N)}\rrbracket
\to K\llbracket (X_p)_{p \in \PP(M)}\rrbracket$ such that the following diagram commutes.
\begin{equation}\label{diagram0}
\xymatrix{
\AAa(N) \ar[rr]^>>>>>>>>>>>>{\varphi^{\ast}} \ar[d]_{\Phi_N}  & & \AAa(M) \ar[d]^{\Phi_M} \\
K\llbracket (Y_q)_q \rrbracket \ar[rr]^>>>>>>>>>>{\psi} && K\llbracket (X_p)_p  \rrbracket
}
\end{equation}
\end{theorem}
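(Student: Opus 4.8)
The plan is to realize $\Phi$ as a ``generating series'' map: for $a \in M$ with unique factorization $a = \prod_p p^{n_p(a)}$, set
\[
\Phi(f) = \sum_{a \in M} f(a)\,(X_p)^{(n_p(a))} .
\]
The assignment $a \mapsto (n_p(a))$ is an isomorphism of monoids $M \xrightarrow{\sim} \ZZ_{\geq 0}^{(\PP(M))}$ (this is exactly the existence and uniqueness of factorizations together with $M^\times = \{1\}$), and under this identification $\Phi$ is evidently the $K$-linear bijection between functions $M \to K$ and families $(\alpha_{(n_p)})$; so bijectivity and $K$-linearity are immediate. The only thing to check is $\Phi(f \ast g) = \Phi(f)\Phi(g)$: the coefficient of $(X_p)^{(n_p)}$ in $\Phi(f)\Phi(g)$ is a sum of products $f(b)g(c)$ over the pairs whose exponent vectors add up to $(n_p)$ --- finitely many, since $(n_p)$ has finite support, which is precisely what makes Bourbaki's product well defined --- and under $M \cong \ZZ_{\geq 0}^{(\PP(M))}$ these are exactly the pairs with $bc = a$, so the coefficient equals $(f \ast g)(a)$. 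Since also $\Phi(e) = 1$, $\Phi$ is an isomorphism of $K$-algebras.

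For functoriality, note first that an isomorphism $\varphi : M \to N$ carries primes to primes: if $\varphi(p) = xy$ then $p = \varphi^{-1}(x)\varphi^{-1}(y)$, forcing one of $x, y$ to be $1$. Hence $\varphi$ restricts to a bijection $\PP(M) \xrightarrow{\sim} \PP(N)$, and we take $\psi : K\llbracket (Y_q)_q\rrbracket \to K\llbracket (X_p)_p\rrbracket$ to be the $K$-algebra isomorphism relabeling variables along it, $Y_q \mapsto X_{\varphi^{-1}(q)}$, with inverse the relabeling induced by $\varphi^{-1}$. To see that $(\ref{diagram0})$ commutes, evaluate both composites on an arbitrary $g \in \AAa(N)$: applying $\psi$ to $\Phi_N(g) = \sum_{b \in N} g(b)\,(Y_q)^{(n_q(b))}$, and using that $\varphi^{-1}$ carries the factorization of $b = \varphi(a)$ to that of $a$ (so $n_p(a) = n_{\varphi(p)}(\varphi(a))$ and $(Y_q)^{(n_q(\varphi(a)))}$ becomes $(X_p)^{(n_p(a))}$), turns it into $\sum_{a \in M} g(\varphi(a))\,(X_p)^{(n_p(a))} = \Phi_M(g \circ \varphi) = \Phi_M(\varphi^\ast g)$.

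It remains to record the two ``in particular'' clauses. That $\AAa(M)$ is a UFD is the corresponding statement for $K\llbracket(X_p)_p\rrbracket$; for a countable family of variables this is the theorem of Cashwell and Everett \cite{CE}, and for an arbitrary family it follows by an analogous argument. For the valuation, define $v(f) = \min\{\Omega(a) : f(a) \neq 0\}$, with $v(0) = \infty$, where $\Omega(a)$ counts the prime factors of $a$ with multiplicity; under $\Phi$ this is the order (least total degree of a nonzero term) of the power series. The ultrametric inequality $v(f + g) \geq \min(v(f), v(g))$ is clear, and $v(f \ast g) = v(f) + v(g)$ holds once one knows the lowest-degree homogeneous parts of two nonzero series multiply without cancellation, i.e. that $K\llbracket(X_p)_p\rrbracket$ is a domain. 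This last point does reduce to finitely many variables: given nonzero $F, G$, pick a nonzero monomial of each, let $T$ be the finite set of variables occurring in those two monomials, and apply the continuous $K$-algebra homomorphism $K\llbracket(X_p)_p\rrbracket \to K\llbracket(X_p)_{p \in T}\rrbracket$ killing every variable outside $T$; it sends $F$ and $G$ to nonzero series in a finite-variable power series ring over a field, which is a domain, so $FG \neq 0$. Then $d(f, g) = 2^{-v(f - g)}$ is a complete ultrametric on $\AAa(M)$: a $d$-Cauchy sequence is one whose terms eventually agree, for each bound $N$, at all $a$ with $\Omega(a) \leq N$, hence converges pointwise to a limit in $\AAa(M)$ (equivalently, $K\llbracket(X_p)_p\rrbracket$ is complete, being the completion of the polynomial ring for this topology).

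I expect the main obstacle to lie not in any individual step --- each parallels the classical case $M = \ZZ^+$ --- but in the bookkeeping forced by a possibly uncountable $\PP(M)$: one must argue coefficient-by-coefficient and degree-by-degree rather than by passing to finitely many variables, and must verify that Bourbaki's Cauchy product and the relabeling $\psi$ are well defined on $K\llbracket(X_p)_p\rrbracket$. The sharpest instance is the UFD claim, where --- unlike the domain property above --- there is no reduction to finitely many variables, so one has to run (or cite in the needed generality) the Cashwell--Everett inverse-limit argument.
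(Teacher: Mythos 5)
Your proposal is correct and takes essentially the same approach as the paper: the same generating-series map $\Phi(f)=\sum_{a\in M}f(a)(X_p)^{n(a)}$ with multiplicativity checked coefficientwise against the Cauchy product, and functoriality via the relabeling $\psi(Y_q)=X_{\varphi^{-1}(q)}$ together with the identity $n_p(a)=n_{\varphi(p)}(\varphi(a))$. The added material on the UFD property, the valuation, and completeness is not part of the stated theorem (the paper treats it separately in Corollary \ref{UFD} and Section 4, citing Nishimura rather than an ad hoc extension of Cashwell--Everett for infinitely many variables), but this does not affect the correctness of your argument for the statement at hand.
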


\begin{proof}
For $a \in M$ not equal to $1$, define $n(a) \in \ZZ_{\geq 0}^{(\PP(M))}$
by $n(a)_p = \text{ord}_p(a)$, the exponent of $p$ in the prime factorization of $a$. Also, let $n(1) \in \ZZ_{\geq 0}^{(\PP(M))}$ be the zero function.
Given $f \in \AAa(M)$, define a power series $\Phi(f) \in K\llbracket(X_p)_p\rrbracket$
by
$$
\Phi(f) = \sum_{a \in M} f(a)(X_p)^{n(a)}.
$$
Clearly $\Phi : \AAa(M) \to K\llbracket(X_p)_p\rrbracket$ is injective, $K$-linear, and $\Phi(e) = 1$, where $e \in \AAa(M)$ is the multiplicative identity. Next, based on how multiplication is defined in the ring $K\llbracket(X_p)_p\rrbracket$ (see \cite[III, \S\S 10-11]{Bourbaki1}), 
$$
\Phi(f\ast g) = \sum_{a \in M}(f \ast g)(a)(X_p)^{n(a)}
= \sum_{a \in M}\sum_{bc = a} f(b)g(c)(X_p)^{n(a)}
= \Phi(f)\Phi(g),
$$
so $\Phi$ is a ring homomorphism. Finally, let
$$
F = \sum_{(n_p)} \alpha_{(n_p)}(X_p)^{(n_p)} \in K\llbracket(X_p)_p\rrbracket.
$$
Given $(n_p) \in \ZZ_{\geq 0}^{(\PP(M))}$, there is a corresponding vector
$(n_{p_1}, \ldots, n_{p_r}) \in (\ZZ^+)^r$, just the nonzero values of $n$ in some order. 
Define an element $a_{(n_p)} = p_1^{n_{p_1}}\cdots p_r^{n_{p_r}} \in M$. 
Every $a \in M$ arises as $a_{(n_p)}$ for a unique $(n_p) = n(a) \in 
\ZZ_{\geq 0}^{(\PP(A))}$. Define $f \in \AAa(M)$ by $f(a) = \alpha_{n(a)}$. Then
$\Phi(f) = F$ and hence $\Phi$ is surjective.

Now suppose $\varphi : M \to N$ is an isomorphism of monoids, so $\varphi^\ast : \AAa(M) \to \AAa(N)$ is an isomorphism of $K$-algebras. Given a function $m : \PP(N) \to \ZZ_{\geq 0}$ with finite support, there is a corresponding function $m^\ast : \PP(M) \to \ZZ_{\geq 0}$ with finite support given by $m^\ast_p = m_{\varphi(p)}$. Define
$\psi :  K\llbracket(Y_q)_q\rrbracket \to K\llbracket(X_p)_p\rrbracket$ by
$$
\psi\Big(\sum_{(m_q)}\beta_{(m_q)}(Y_q)^{(m_q)}\Big) = 
\sum_{(m_q)}\beta_{(m_q)}(X_{\varphi^{-1}(q)})^{(m^\ast_{\varphi^{-1}(q)})}.
$$
Note that since $\varphi$ is an isomorphism, for $b \in N$ and $p \in \PP(M)$ we have $n(b)^\ast_p = \text{ord}_{\varphi(p)}(b) = \text{ord}_p(\varphi^{-1}(b))$, so $n(b)^\ast = n(\varphi^{-1}(b))$. Then for any
$f \in \AAa(N)$,
$$
\psi(\Phi_N(f)) = \sum_{b \in N}f(b)(X_{\varphi^{-1}(q)})^{n(\varphi^{-1}(b))}
= \sum_{a \in M}f(\varphi(a))(X_p)^{n(a)} = \Phi_M(\varphi^\ast f).
$$
This shows (\ref{diagram0}) commutes, and hence $\psi$ is an isomorphism of $K$-algebras.
\end{proof}

\begin{corollary}\label{UFD}
The ring $\AAa(M)$ is a {\upshape UFD} of dimension $\#\PP(M)$, where the dimension is $\infty$ if $\PP(M)$ is infinite. The ring is Noetherian if and only if $\PP(M)$ is finite.
\end{corollary}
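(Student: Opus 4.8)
The plan is to transfer everything through the isomorphism $\Phi : \AAa(M) \to K\llbracket (X_p)_p\rrbracket$ established in Theorem \ref{isomorphism}, so that the corollary becomes a statement purely about the power series ring $R = K\llbracket (X_p)_p\rrbracket$ in the set of indeterminates indexed by $\PP(M)$. Write $P = \PP(M)$ and split into the cases $P$ finite and $P$ infinite.

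If $P$ is finite, say $\#P = n$, then $R = K\llbracket X_1, \dots, X_n\rrbracket$ is the ordinary power series ring in $n$ variables over a field. This is a standard object: it is a complete regular local Noetherian ring of Krull dimension $n$, hence in particular a UFD (regular local rings are UFDs, by the Auslander--Buchsbaum theorem, or for power series rings directly via the Weierstrass preparation theorem and induction on $n$). So in this case $\AAa(M)$ is Noetherian, a UFD, and of dimension $n = \#P$, as claimed. I would just cite a standard reference (Matsumura, or Bourbaki) rather than reprove these facts.

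If $P$ is infinite, I would argue as follows. First, $R$ is \emph{not} Noetherian: the ideal generated by all the indeterminates $(X_p)_{p\in P}$ is not finitely generated, since in any finite subset of generators only finitely many variables appear, and one checks that $X_q$ for $q$ outside that finite set does not lie in the ideal they generate (look at the linear part, or work modulo the ideal generated by those finitely many variables). Hence $\AAa(M)$ is not Noetherian. Next, $R$ has infinite Krull dimension: for any finite subset $\{p_1, \dots, p_r\} \subseteq P$ the chain of primes $(X_{p_1}) \subsetneq (X_{p_1}, X_{p_2}) \subsetneq \cdots \subsetneq (X_{p_1}, \dots, X_{p_r})$ has length $r$ (each quotient $R/(X_{p_1}, \dots, X_{p_i})$ is again a power series ring, hence a domain, so each of these ideals is prime), and $r$ is unbounded, so $\dim R = \infty = \#P$. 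Finally, for the UFD property, I would use the fact that $R$ is the union (directed colimit) of the subrings $R_S = K\llbracket (X_p)_{p \in S}\rrbracket$ over finite subsets $S \subseteq P$; more precisely I want to show any element of $R$ lies in such an $R_S$ and that $R_S \hookrightarrow R$ is compatible with divisibility. Actually the cleanest route: observe that $R = \bigcup_S R_S$ as a directed union of Noetherian UFDs, and that each inclusion $R_S \subseteq R_{S'}$ (for $S \subseteq S'$) is faithfully flat with the property that a prime of $R_S$ stays prime in $R_{S'}$ — in particular irreducibles stay irreducible and the units don't change ($R^\times = K^\times$ throughout). Then a standard argument shows a directed union of UFDs in which irreducibles remain irreducible and which has the "finite support" property (every element lies in some $R_S$, and any factorization of it into irreducibles can be taken inside some $R_{S'}$) is again a UFD.

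The main obstacle is the UFD claim in the infinite-variable case: power series rings in infinitely many variables are genuinely delicate (unlike polynomial rings, where $K[(X_p)_p] = \bigcup_S K[(X_p)_{p\in S}]$ trivially), because an element of $K\llbracket (X_p)_p\rrbracket$ can involve infinitely many variables — but, crucially, by the definition of this ring each individual \emph{monomial} involves only finitely many variables, and for the argument what matters is whether a given series lies in some $R_S$. This is where I would need to be careful: it is \emph{not} true that every element of $R$ lies in some $R_S$ with $S$ finite. So the directed-union argument must instead be phrased in terms of the original paper's reference \cite[IV, \S 4]{Bourbaki2}, which already contains the theorem that a power series ring over a field in an arbitrary set of indeterminates is a UFD; I expect the intended proof simply invokes that result together with the dimension and Noetherianity bookkeeping above. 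So concretely, I would: (1) reduce to $R$ via $\Phi$; (2) cite Bourbaki for "$R$ is a UFD"; (3) prove $\dim R = \#P$ by the explicit prime chains and, for finite $P$, the regular local ring dimension formula; (4) prove the Noetherian dichotomy via the non-finitely-generated maximal ideal when $P$ is infinite and the standard fact when $P$ is finite.
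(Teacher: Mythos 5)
Your route is essentially the paper's: transfer everything through $\Phi$, quote the standard facts for finitely many variables, and in the infinite case obtain infinite dimension from explicit chains of primes, non-Noetherianity, and a citation for the UFD claim. Two of your variations are fine and even a bit more self-contained: you justify primality of the ideals in the chain (the paper just asserts it), and you prove non-Noetherianity directly by showing the ideal $(X_p : p \in \PP(M))$ is not finitely generated via linear parts, whereas the paper argues that the ring is local of infinite dimension and hence not Noetherian. Just do not call that ideal the maximal ideal: when $\PP(M)$ is infinite it is strictly contained in $\frakM = \{F : F^{(0)} = 0\}$, as the paper itself notes; any non-finitely-generated ideal of course suffices.

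The one genuine problem is the reference carrying the whole weight of the infinite-variable UFD claim. Bourbaki~\cite[IV, \S 4]{Bourbaki2} only constructs $K\llbracket (X_p)_p\rrbracket$ and its basic properties; it does not prove that a power series ring over a field in an arbitrary set of indeterminates is factorial. That is a genuinely nontrivial theorem, and your own analysis explains why: since an element may involve infinitely many variables, the ring is not the directed union of the subrings $K\llbracket (X_p)_{p\in S}\rrbracket$ over finite $S$, so the easy colimit-of-UFDs argument is unavailable, and the actual proofs work with the inverse limit of the finite-variable rings and show that factorizations can be chosen compatibly. The paper cites \cite[Theorem 1]{Nishimura} at exactly this point (the result goes back to Cashwell--Everett and Nishimura), and your write-up needs that, or an equivalent, reference; with it, the remainder of your argument is correct.
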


\begin{proof}
If $\#\PP(M) = n$ then $\AAa(M) \cong K\llbracket X_1, \ldots, X_n\rrbracket$, which is 
a Noetherian UFD of dimension $n$ (see \cite[IV, Theorem 9.3, Corollary 9.5]{Lang} and \cite[Theorem 15.4]{Matsumura}). If $\PP(M)$ is infinite, then 
$K\llbracket(X_p)_p\rrbracket$ is a UFD by \cite[Theorem 1]{Nishimura}. This ring is infinite dimensional since, choosing a sequence of nonzero prime elements $\{p_i : i \geq 1\}$ in $M$, there is an infinite chain of prime ideals
$$
(X_{p_1}) \subsetneq (X_{p_1}, X_{p_2}) \subsetneq (X_{p_1}, X_{p_2}, 
X_{p_3}) \subsetneq \cdots
$$
in $K\llbracket(X_p)_p\rrbracket$. As $\AAa(M) \cong K\llbracket(X_p)_p\rrbracket$ is local and infinite dimensional, it is not Noetherian.
\end{proof}

\section{Topological ring structure}

Let $M$ be a monoid. In this section we will define a valuation on the ring $\AAa(M)$ making it into a complete metric space. We start by defining a valuation on $K\llbracket (X_p)_p\rrbracket$. If $(n_p)  \in \ZZ_{\geq 0}^{(\PP(M))}$ is nonzero at the primes $\{p_1, \ldots, p_r\}$, define $\ell(n_p) = n_{p_1}+\cdots + n_{p_r}$. Define a function 

$$
w : K\llbracket (X_p)_p\rrbracket^\ast \to \ZZ_{\geq 0}, \quad w\Big(\sum_{(n_p)}\alpha_{(n_p)}(X_p)^{(n_p)}\Big) = \min\{\ell(n_{p})
: \alpha_{(n_p)} \neq 0\}
$$
and set $w(0) = \infty$.
In other words, $w$ gives the smallest degree of a homogeneous part appearing in a power series. It is easily verified that $w(FG) = w(F) + w(G)$ and $w(F+G) \geq \min(w(F), w(G))$, so
$w$ determines a discrete valuation on $\text{Frac}(K\llbracket (X_p)_p\rrbracket)$.
Let $\Phi : \AAa(M) \to K\llbracket (X_p)_p\rrbracket$ be the isomorphism from Theorem \ref{isomorphism}, and for $a = p_1\cdots p_r \in M$, where the primes $p_i$ are not necessarily distinct, define $\lambda(a) = r$ and $\lambda(1) = 0$. Recall that for $a \in M$, the 
function $n(a) : \PP(M) \to \ZZ_{\geq 0}$ is given by $n(a)_p = \text{ord}_p(a)$,
so $\ell(n(a)) = \lambda(a)$. Hence, for any $f \in \AAa(M)$,
$$
w(\Phi(f)) = \min\{\lambda(a) : a \in M, f(a) \neq 0\}
$$
determines a discrete valuation $v$ on $\text{Frac}(\AAa(M))$. Note that $\AAa(M)$ is contained in the valuation ring of $v$, but is not equal to it if $\#\PP(M) > 1$ by Corollary \ref{UFD}.

Fix a real number $c \in (0, 1)$ and for $F \in \text{Frac}(\AAa(M))^\ast$, set $|F| = c^{v(f)}$ and $|0| = 0$, which is a non-archimedean absolute value on $\text{Frac}(\AAa(M))$. A function $f \in \AAa(M)$ is in $\AAa(M)^\times$ if and only if $f(1) \neq 0$, which is equivalent to
$|f| = 1$, so $\AAa(M)^\times = \{f \in \AAa(M) : |f| = 1\}$. The absolute value $|\cdot|$ determines a metric $d$ on $\AAa(M)$ in the usual way: $d(f, g) = |f - g|$. The metric topology defined
on $\AAa(M)$ in this way is independent of the choice of the number $c$ and makes $\AAa(M)$ into a topological ring. Defining an analogous metric on $K\llbracket (X_p)_p\rrbracket$ using the valuation $w$, it follows from the definition of $v$ that the isomorphism $\Phi : \AAa(M) \to K\llbracket (X_p)_p\rrbracket$ is an isometry.

\begin{proposition}
If $\varphi : M \to N$ is an isomorphism of monoids, then $\varphi^\ast : \AAa(N) \to \AAa(M)$ is an isometric isomorphism of topological rings.
\end{proposition}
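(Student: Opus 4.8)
The plan is to reduce everything to the valuation formula $v(f) = \min\{\lambda(a) : a \in M,\ f(a) \neq 0\}$ recorded above, together with Proposition \ref{iso}. By Proposition \ref{iso} we already know $\varphi^\ast : \AAa(N) \to \AAa(M)$ is an isomorphism of $K$-algebras, so the only thing left to check is that $\varphi^\ast$ is an isometry; a bijective isometry of metric spaces is automatically a homeomorphism, and combined with being a ring isomorphism this gives an isomorphism of topological rings. I would write $v_M$ for the valuation $v$ on $\AAa(M)$ constructed above, and $v_N$ for the analogous valuation on $\AAa(N)$.

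The first step would be to observe that a monoid isomorphism preserves the length of a prime factorization. Write $\lambda_M$ and $\lambda_N$ for the length functions, so $\lambda_M(a) = r$ when $a = p_1\cdots p_r$ with the $p_i \in \PP(M)$ not necessarily distinct, and $\lambda_M(1) = 0$. Since $\varphi$ and $\varphi^{-1}$ both preserve the divisibility relation and both send $1$ to $1$, the map $\varphi$ carries irreducible elements to irreducible elements, hence — because in our monoids irreducible $=$ prime — carries $\PP(M)$ bijectively onto $\PP(N)$. Thus if $a = p_1\cdots p_r$ then $\varphi(a) = \varphi(p_1)\cdots\varphi(p_r)$ is a factorization of $\varphi(a)$ into $r$ primes, and by uniqueness of factorization $\lambda_N(\varphi(a)) = r = \lambda_M(a)$ for every $a \in M$.

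Next I would use that $\varphi : M \to N$ is a bijection to rewrite the valuation. For $f \in \AAa(N)$,
\begin{align*}
v_N(f) &= \min\{\lambda_N(b) : b \in N,\ f(b) \neq 0\}
= \min\{\lambda_N(\varphi(a)) : a \in M,\ f(\varphi(a)) \neq 0\} \\
&= \min\{\lambda_M(a) : a \in M,\ (\varphi^\ast f)(a) \neq 0\} = v_M(\varphi^\ast f),
\end{align*}
applying the identity $\lambda_N(\varphi(a)) = \lambda_M(a)$ from the previous step. Hence $|\varphi^\ast f| = c^{\,v_N(f)} = |f|$ for all $f$ (the case $f = 0$ being covered by the convention $c^\infty = 0$), so $\varphi^\ast$ is an isometry, and therefore an isometric isomorphism of topological rings.

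I do not expect any real obstacle: the whole argument is bookkeeping with the valuation, and the only point that deserves an explicit sentence is that a monoid isomorphism sends primes to primes and preserves multiplicities, which is immediate from the two-sided preservation of divisibility. An alternative would be to invoke the commutative square of Theorem \ref{isomorphism}, reducing the claim to the statement that the induced isomorphism $\psi$ of power series rings preserves the valuation $w$ (true because $\psi$ merely relabels and permutes the indeterminates), but the direct computation above is shorter and avoids re-examining $\psi$.
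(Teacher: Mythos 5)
Your proposal is correct and follows essentially the same route as the paper: invoke Proposition \ref{iso} for the ring isomorphism, note that a monoid isomorphism carries primes to primes so that $\lambda(\varphi(a)) = \lambda(a)$, and then rewrite the minimum defining $v_N(f)$ over $b = \varphi(a)$ to conclude $v_M(\varphi^\ast f) = v_N(f)$, hence an isometry. The only cosmetic difference is that you justify prime-preservation explicitly and appeal to ``bijective isometry $\Rightarrow$ homeomorphism,'' whereas the paper simply notes that $(\varphi^\ast)^{-1} = (\varphi^{-1})^\ast$ is likewise an isometry.
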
 

\begin{proof}
By Proposition \ref{iso}, $\varphi^\ast : \AAa(N) \to \AAa(M)$ is an isomorphism of rings. Let $v_M$ and $v_N$ be the discrete valuations on $\AAa(M)$ and $\AAa(N)$, respectively, as above. Since $\varphi$ is an isomorphism, each $b \in N$ is of the form
$b = \varphi(a)$ for a unique $a \in M$, and for each $p \in \PP(M)$,
we have $\varphi(p) \in \PP(N)$. This implies $\lambda(\varphi(a)) = \lambda(a)$ and for any $f \in \AAa(N)$,
\begin{align*}
v_M(\varphi^\ast f) &= \min\{\lambda(a) : a \in M, f(\varphi(a)) \neq 0\} \\
&= \min\{\lambda(\varphi(a)) : a \in M, f(\varphi(a)) \neq 0\} \\
&= \min\{\lambda(b) : b \in N, f(b) \neq 0\} \\
&= v_N(f).
\end{align*}
If $d_M$ and $d_N$ are the metrics on $\AAa(M)$ and $\AAa(N)$ as above, this shows
$d_N(f, g) = d_M(\varphi^\ast f, \varphi^\ast g)$ for any $f, g \in \AAa(N)$. Therefore $\varphi^\ast$ is an isometry, and thus continuous. Similarly, $(\varphi^\ast)^{-1} = (\varphi^{-1})^\ast$ is a continuous isometry.
\end{proof}

Let 
$$
\frakm = \{f \in \AAa(M) : f(1) = 0\} = \{f \in \AAa(M) : v(f) \geq 1\}
$$
be the maximal ideal in $\AAa(M)$.

\begin{lemma}
We have $\displaystyle{\bigcap_{n = 1}^{\infty} \frakm^n = (0)}$.
\end{lemma}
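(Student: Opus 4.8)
The plan is to transfer the statement across the isomorphism $\Phi : \AAa(M) \to K\llbracket (X_p)_p\rrbracket$ of Theorem~\ref{isomorphism}, which sends $\frakm$ to the ideal $\frakn = \{F : w(F) \geq 1\}$ of power series with zero constant term, and then argue directly with the valuation $w$. Concretely, since $\Phi$ is a ring isomorphism carrying $\frakm$ onto $\frakn$, it carries $\frakm^n$ onto $\frakn^n$ and $\bigcap_n \frakm^n$ onto $\bigcap_n \frakn^n$, so it suffices to show $\bigcap_{n=1}^\infty \frakn^n = (0)$ in $K\llbracket (X_p)_p\rrbracket$. Equivalently, one can phrase the whole argument intrinsically in $\AAa(M)$ using the valuation $v$: I will show $\frakm^n \subseteq \{f : v(f) \geq n\}$, and since $v(f) < \infty$ for every nonzero $f$, the intersection over all $n$ is forced to be $(0)$.

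The key step is the inclusion $\frakm^n \subseteq \{f \in \AAa(M) : v(f) \geq n\}$. For this I would use that $w$ (equivalently $v$) is a valuation: $w(FG) = w(F) + w(G)$ and $w(F + G) \geq \min(w(F), w(G))$, both recorded in the text just before the lemma. If $F \in \frakn$ then $w(F) \geq 1$, so any product $F_1 \cdots F_n$ of elements of $\frakn$ has $w(F_1 \cdots F_n) = \sum_i w(F_i) \geq n$; a general element of $\frakn^n$ is a finite sum of such products, and the ultrametric inequality gives $w \geq n$ for the sum as well. Hence every $F \in \frakn^n$ has $w(F) \geq n$. Transporting back through $\Phi$, every $f \in \frakm^n$ has $v(f) \geq n$.

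To finish, suppose $f \in \bigcap_{n=1}^\infty \frakm^n$. Then $v(f) \geq n$ for all $n \geq 1$, so $v(f) = \infty$. But $v(f) = \min\{\lambda(a) : a \in M,\ f(a) \neq 0\}$ is a nonnegative integer whenever $f \neq 0$ (the minimum is over a nonempty set of nonnegative integers), so $v(f) = \infty$ forces $f = 0$. Therefore $\bigcap_{n=1}^\infty \frakm^n = (0)$.

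I do not anticipate a serious obstacle here; the only point requiring a little care is making sure that $\Phi(\frakm) = \frakn$ and hence $\Phi(\frakm^n) = \frakn^n$, which is immediate from $\Phi$ being a $K$-algebra isomorphism with $\Phi(f)(0\text{-part}) = f(1)$, together with the fact that $\Phi$ is an isometry for $v$ and $w$. If one prefers to avoid passing through the power series ring at all, the argument in the previous two paragraphs works verbatim inside $\AAa(M)$ using only the properties of $v$ already established, so the power series ring is a convenience rather than a necessity. The main thing to state cleanly is that a nonzero element of $\AAa(M)$ has finite valuation, which is exactly why the descending chain of powers of $\frakm$ has trivial intersection.
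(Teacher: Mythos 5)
Your argument is correct and coincides with the paper's own proof: the paper gives a direct combinatorial argument and then explicitly records, as an alternative, exactly your valuation argument ($v(f_{ij}) \geq 1$ forces $v(f) \geq n$ for $f \in \frakm^n$, and a nonzero element has finite valuation). The detour through $K\llbracket (X_p)_p\rrbracket$ is, as you note yourself, unnecessary, and the intrinsic version you sketch matches the paper verbatim.
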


\begin{proof}
Suppose $g \in \bigcap_{n = 1}^\infty \frakm^n$ is nonzero,
so there exists $a_g \in M$ such that $g(a_g) \neq 0$. Let $m = \lambda(a_g)$ and choose an integer $n > m$. If $a_g = b_1\cdots b_n$ for some $b_i \in M$, then $\sum_{i=1}^n\lambda(b_i) = m$. Since $m < n$, some $b_i = 1$, which implies that for any $f_1, \ldots, f_n \in \frakm$,
$$
(f_1\ast\cdots\ast f_n)(a_g) = \sum_{b_1\cdots b_n = a_g}f_1(b_1)\cdots
f_n(b_n) = 0.
$$
In particular, $g(a_g) = 0$, a contradiction.

An alternative approach is to use the valuation $v$. If $f \in \frakm^n$ then
$$
f = \sum_{i=1}^r f_{i1}\ast\cdots \ast f_{in}
$$
for some $f_{ij} \in \frakm$. Since each $v(f_{ij}) \geq 1$, it follows that $v(f) \geq n$. Therefore if $f \in \bigcap_{n=1}^\infty\frakm^n$ then $v(f) \geq n$ for all $n \geq 1$, which implies $f = 0$.
\end{proof}

The valuation $v$ allows us to define a function $\text{N} : \AAa(M) \to \ZZ_{\geq 0}$ by $\text{N}(0) = 0$ and $\text{N}(f) = 2^{v(f)}$ for any nonzero $f \in \AAa(M)$. 
Then $\text{N}(f \ast g) = \text{N}(f)\text{N}(g)$ for any $f, g \in \AAa(M)$, and $\text{N}(f) = 1$ if and only if 
$f \in \AAa(M)^\times$. Also, since $\AAa(M)$ is a UFD, if $\text{N}(f) = 2$, that is $v(f) = 1$, then $f$ is a prime element.

For any $F \in K\llbracket (X_p)_p\rrbracket$ and integer $n \geq 0$, let $F^{(n)}$ be the homogeneous degree $n$ part of $F$. Let $$\frakM = \{F \in  K\llbracket (X_p)_p\rrbracket : F^{(0)} = 0\}$$ be the maximal ideal of $K\llbracket (X_p)_p\rrbracket$. Note that $\frakM$ contains $(X_p : p \in \PP(M))$, the ideal generated by all $X_p$. These ideals are equal if and only if $\PP(M)$ is finite.
If $\PP(M)$ is infinite, then, for example, $\sum_{p \in \PP(M)}X_p$ is in 
$\frakM$ but not $(X_p : p \in \PP(M))$. 

For each $n \geq 1$ define an ideal
$$
\frakM_n = \{F \in  K\llbracket (X_p)_p\rrbracket : w(F) \geq n\}.
$$
Then $\frakM_1 = \frakM$ and these ideals form a descending chain $K\llbracket (X_p)_p\rrbracket \supset \frakM_1 \supset \frakM_2 \supset \cdots$.
We have $\frakM^n \subset \frakM_n$ and the ideals are equal if and only if $\PP(M)$ is finite. If
$\PP(M)$ is infinite, then, for example, $\sum_{p\in\PP(M)}X_p^n$ is in
$\frakM_n$ but not $\frakM^n$.

The corresponding situation in $\AAa(M)$ is the following.
For each $n \geq 1$ define an ideal $$\frakm_n = \{f \in \AAa(M) : v(f) \geq n\},$$ so $\frakm_1 = \frakm$ and $\AAa(M) \supset \frakm_1 \supset \frakm_2 \supset \cdots$. Then $\frakm^n \subset \frakm_n$ and they are equal if and only if $\PP(M)$ is finite. There is a  topology on $\AAa(M)$ such that 
$\{f + \frakm_n : n \geq 1\}$ is a neighborhood basis of $f \in \AAa(M)$. We will call this the
$\frakm_n$-topology.

\begin{proposition} 
The $\frakm_n$-topology on $\AAa(M)$ is the same as the metric topology induced by the absolute value $|\cdot|$.
\end{proposition}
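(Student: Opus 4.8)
The plan is to compare, at an arbitrary point $f \in \AAa(M)$, the neighbourhood basis $\{f + \frakm_n : n \geq 1\}$ for the $\frakm_n$-topology with the collection $\{B(f,\varepsilon) : \varepsilon > 0\}$ of metric balls $B(f,\varepsilon) = \{g \in \AAa(M) : |f-g| < \varepsilon\}$, and to show that each basis refines the other; this is exactly the condition for the two topologies to agree. The only computation needed is the translation between the integer-valued valuation $v$ and the real-valued absolute value. Since $v$ takes values in $\ZZ_{\geq 0} \cup \{\infty\}$ and $0 < c < 1$, for every $n \geq 1$ we have
\[
f + \frakm_n = \{g \in \AAa(M) : v(f-g) \geq n\} = \{g \in \AAa(M) : |f-g| \leq c^n\} = \{g \in \AAa(M) : |f-g| < c^{n-1}\},
\]
the last equality because the condition $v(f-g) \geq n$ is equivalent to $v(f-g) > n-1$.

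From this identity each coset $f + \frakm_n$ equals the open metric ball $B(f, c^{n-1})$, hence is open in the metric topology. Conversely, given $\varepsilon > 0$, I would choose $n \geq 1$ with $c^n < \varepsilon$ — possible since $c^n \to 0$ as $n \to \infty$ because $c \in (0,1)$ — and then the displayed identities give $f + \frakm_n = \{g : |f-g| \leq c^n\} \subseteq B(f,\varepsilon)$. Thus every metric neighbourhood of $f$ contains some set $f + \frakm_n$, and every $f + \frakm_n$ is itself a metric neighbourhood of $f$, so the two neighbourhood bases at $f$ are cofinal in one another. Since $f$ was arbitrary, the $\frakm_n$-topology and the metric topology coincide.

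There is essentially no obstacle here; the only point requiring a little care is the discreteness of $v$, which is precisely what lets the ``weak'' condition $v(f-g) \geq n$ defining $\frakm_n$ be rewritten as the ``strict'' condition $|f-g| < c^{n-1}$, so that each coset $f + \frakm_n$ is at once open and closed in the metric space. If one prefers, the argument can be shortened by first noting that both topologies are translation invariant, reducing everything to comparing neighbourhood bases at $0$, where $\frakm_n = \{g \in \AAa(M) : |g| \leq c^n\}$.
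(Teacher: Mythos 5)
Your proof is correct and takes essentially the same route as the paper's: both directions amount to comparing the basic neighborhoods $f + \frakm_n$ with the metric balls at each point, using that $c \in (0,1)$ and that $v$ is integer-valued. Your observation that the discreteness of $v$ gives the exact identity $f + \frakm_n = \{g : |f-g| \leq c^n\} = B(f, c^{n-1})$ is a slightly sharper packaging of the paper's inclusion-based argument with $\log_c$ bounds, but it is not a different method.
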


\begin{proof}
Suppose $U \subset \AAa(M)$ is open in the metric topology and let $f_0 \in U$. Then there is an $r > 0$ such that $D(f_0, r) = \{f \in \AAa(M) : |f - f_0| < r\} \subset U$. Choose an integer $n > \log_c(r)$. If
$f \in f_0 + \frakm_n$ then $v(f-f_0) > \log_c(r)$, so $|f-f_0| < r$. Hence $f_0+\frakm_n \subset D(f_0, r) \subset U$,
which shows $U$ is open in the $\frakm_n$-topology.

Conversely, suppose $U \subset \AAa(M)$ is open in the $\frakm_n$-topology and let $f_0 \in U$. Then $f_0 + \frakm_n \subset U$ for some $n \geq 1$. Choose a real number $r$ such that $\log_c(r) > n$. Then for $f \in D(f_0, r)$, $|f-f_0| < r < c^n$, which means $f \in f_0 + \frakm_n$, so $D(f_0, r) \subset U$.
This shows $U$ is open in the metric topology.
\end{proof}

\begin{proposition}
The metric space $\AAa(M)$ is complete.
\end{proposition}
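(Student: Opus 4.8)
The plan is to take an arbitrary Cauchy sequence $(f_n)$ in $\AAa(M)$ and construct its limit by a pointwise argument. By the preceding proposition the metric topology agrees with the $\frakm_n$-topology, so I would first rephrase the Cauchy condition in terms of the valuation $v$: for each integer $k \geq 1$ there is an index $N_k$, which we may take non-decreasing in $k$ since $v(g) \geq k$ forces $v(g) \geq j$ for all $j \leq k$, such that $v(f_n - f_m) \geq k$ whenever $n, m \geq N_k$. By the formula $v(g) = \min\{\lambda(a) : g(a) \neq 0\}$, this says precisely that $f_n(a) = f_m(a)$ for all $a \in M$ with $\lambda(a) < k$ and all $n, m \geq N_k$.

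Next I would build the limit. Fixing $a \in M$ and taking $k = \lambda(a) + 1$, one gets $(f_n - f_m)(a) = 0$ for all $n, m \geq N_k$, so the sequence $(f_n(a))_n$ is eventually constant; I would define $f(a)$ to be its eventual value. This produces an element $f \in \AAa(M)$, and it remains to verify $f_n \to f$ in the metric. Given $k \geq 1$ and $n \geq N_k$, every $a$ with $\lambda(a) < k$ satisfies $n \geq N_k \geq N_{\lambda(a)+1}$, hence $f_n(a) = f(a)$; therefore $v(f_n - f) \geq k$ and $|f_n - f| \leq c^k$. Since $c \in (0,1)$, letting $k \to \infty$ shows $f_n \to f$, which proves completeness.

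Two alternative routes are worth recording. One can transport completeness along the isometry $\Phi : \AAa(M) \to K\llbracket (X_p)_p\rrbracket$ of Theorem \ref{isomorphism} (with the $w$-metric on the target), reducing to completeness of the power series ring, which is classical. Alternatively, one can observe that the canonical map $\AAa(M) \to \varprojlim_n \AAa(M)/\frakm_n$ is a bijection — an arithmetic function is exactly a compatible family of its restrictions to the sets $\{a \in M : \lambda(a) < n\}$ — so $\AAa(M)$ is $\frakm_n$-adically complete, and this topology is the metric topology. I do not expect a genuine obstacle here; the only point to watch is that when $\PP(M)$ is infinite the ideal $\frakm_n$ has infinite codimension, because there are infinitely many primes $a$ with $\lambda(a) = 1$, so the limit cannot be extracted inside a finite-dimensional quotient. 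The pointwise construction above avoids this difficulty, since each individual value $f(a)$ stabilizes after finitely many terms.
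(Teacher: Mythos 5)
Your main argument is correct and is essentially the paper's proof: both rephrase the Cauchy condition as stabilization of the values $f_n(a)$ for $a$ of bounded $\lambda$, define the limit $f$ pointwise by these eventual values, and verify $v(f - f_n) \geq k$ for $n$ large to conclude convergence. (Your parenthetical alternatives are fine as well, though note the paper deduces completeness of $K\llbracket (X_p)_p\rrbracket$ from that of $\AAa(M)$ via $\Phi$, rather than the other way around.)
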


\begin{proof}
Let $\{f_n\}_{n \geq 1}$ be a Cauchy sequence in $\AAa(M)$. Then for all $\varepsilon  > 0$, there is an integer $t \geq 1$ such that $|f_m - f_n| < \varepsilon$ for all $m, n \geq t$, which is equivalent to $f_m(a) = f_n(a)$ for all $a \in M$ such that $\lambda(a) < \log_c(\varepsilon)$ and all $m, n \geq t$. Hence, for all integers $k \geq 0$, there is an
integer $t(k) \geq 1$ such that $f_m(a) = f_n(a)$ for all $a \in M$ such that
$\lambda(a) = k$ and all $m, n \geq t(k)$. 

Define $f \in \AAa(M)$ as follows: for $a \in M$ with $\lambda(a) = k$, set $f(a) = 
f_{t(k)}(a)$. For any $k \geq 0$, if $n \geq \max\{t(0), \ldots, t(k)\}$ then
$$
v(f - f_n) = \min\{\lambda(a) : a \in M, f(a) \neq f_n(a)\} > k,
$$
which gives $|f - f_n| < c^k$. Therefore $\{f_n\}$ converges to $f$. 
\end{proof}

It follows that the metric space $K\llbracket (X_p)_p\rrbracket$ is also complete, since $\Phi : \AAa(M) \to K\llbracket (X_p)_p\rrbracket$ is an isometric isomorphism. To obtain a corollary of this, suppose $\PP(M) = \{p_i : i \geq 1\}$ is
countably infinite, which is the case, for example, if $M = I_{\Oo_L}$ for any number field $L$.
Then $M = \{a_i : i \geq 1\}$ is countably infinite. Choose an indexing such that 
$\lambda(a_k) \leq \lambda(a_{k+1})$ for all $k \geq 1$, which implies $\lambda(a_k) \to \infty$ as $k \to \infty$ (in the usual metric in $\RR$). Set $X_i = X_{p_i}$, and as above, if $a = p_{i_1}^{n_1}\cdots p_{i_r}^{n_r}$, we write
$$
(X_i)^{n(a)} = X_{i_1}^{n_1}\cdots X_{i_r}^{n_r}.
$$
Any $F \in K\llbracket (X_i)_i\rrbracket$ can be expressed in the form
$$
F = \sum_{k=1}^{\infty}\alpha_k(X_i)^{n(a_k)}
$$
for some $\alpha_k \in K$. Given such an $F$ and an integer $t \geq 1$, let
$$
F_t = \sum_{k=1}^t\alpha_k(X_i)^{n(a_k)},
$$
which is a polynomial in $K\llbracket (X_i)_i\rrbracket$. Then $F_t - F_{t-1} = \alpha_t(X_i)^{n(a_t)}$, so $w(F_t - F_{t-1}) = \lambda(a_t) \to \infty$ as $t \to \infty$. This shows $\{F_t\}$ is a Cauchy sequence in $K\llbracket (X_i)_i\rrbracket$ and hence converges, so we may write
$$
F = \lim_{t \to \infty}\sum_{k=1}^t\alpha_k(X_i)^{n(a_k)}.
$$
The isomorphism $\Phi : \AAa(M) \to K\llbracket (X_i)_i\rrbracket$ is given by 
$$
\Phi(f) = \sum_{k=1}^{\infty}f(a_k)(X_i)^{n(a_k)}.
$$
For each $i \geq 1$, let $\pi_i = \Phi^{-1}(X_i)$, which is a prime element of $\AAa(M)$ since
$v(\pi_i) = 1$. Then $\pi_i(p_i) = 1$ for all $i \geq 1$ and $\pi_i(a) = 0$ if $a \neq p_i$. Let
$f \in \AAa(M)$ and for each integer $t \geq 1$, let
$$
f_t = \sum_{k=1}^tf(a_k)(\pi_i)^{n(a_k)} \in \AAa(M),
$$
where, as above, $(\pi_i)^{n(a)} = \pi_{i_1}^{n_1}\ast\cdots\ast\pi_{i_r}^{n_r}$ for 
$a = p_{i_1}^{n_1}\cdots p_{i_r}^{n_r}$. Then $\Phi(f_t) = \Phi(f)_t$ and
$\Phi(f)_t \to \Phi(f)$ as $t \to \infty$ by what we showed above. Hence $\Phi(f_t) \to \Phi(f)$, which implies $f_t \to f$ as $t \to \infty$ since $\Phi$ is an isometry. This proves the following result, which generalizes \cite[Theorem 1]{SS2}.

\begin{corollary}
Suppose $\PP(M)$ is countably infinite. There is a collection of prime elements
$\{\pi_i : i \geq 1\}$ in $\AAa(M)$ such that the $K$-algebra $K[\{\pi_i\}_i]$ is dense in
$\AAa(M)$ and each $f \in \AAa(M)$ can be expressed as a convergent series
$$
f = \sum_{k=1}^{\infty}\alpha_k\pi_1^{n_1(k)}\ast\pi_2^{n_2(k)}\ast\cdots,
$$
for a unique sequence $\{\alpha_k\}$ in $K$ and some integers $n_i(k) \geq 0$, all but finitely many equal to $0$ for each $k$.
\end{corollary}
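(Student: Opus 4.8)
The plan is to transport the problem through the isometric isomorphism $\Phi : \AAa(M) \to K\llbracket (X_i)_i\rrbracket$ of Theorem~\ref{isomorphism} and to invoke the completeness established above; in fact most of the work has already been done in the paragraphs immediately preceding the statement. First I would fix the data: enumerate $\PP(M) = \{p_i : i \geq 1\}$, set $X_i = X_{p_i}$ and $\pi_i = \Phi^{-1}(X_i)$, and enumerate $M = \{a_k : k \geq 1\}$ with $\lambda(a_k) \leq \lambda(a_{k+1})$, so that $\lambda(a_k) \to \infty$. Each $\pi_i$ is prime because $w(X_i) = 1$ gives $v(\pi_i) = 1$, hence $\text{N}(\pi_i) = 2$, and in the UFD $\AAa(M)$ (Corollary~\ref{UFD}) a valuation-one element is prime. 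Writing $a_k = p_1^{n_1(k)}p_2^{n_2(k)}\cdots$ with all but finitely many exponents zero, the fact that $\Phi$ is a ring homomorphism with $\Phi(\pi_i) = X_i$ shows that $\Phi$ carries the Dirichlet monomial $\pi_1^{n_1(k)}\ast\pi_2^{n_2(k)}\ast\cdots$ to the power-series monomial $(X_i)^{n(a_k)}$, and as $k$ varies these monomials run over all monomials of $K\llbracket (X_i)_i\rrbracket$ without repetition.

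For convergence and density, given $f \in \AAa(M)$ I would set $f_t = \sum_{k=1}^t f(a_k)\,\pi_1^{n_1(k)}\ast\pi_2^{n_2(k)}\ast\cdots \in K[\{\pi_i\}_i]$, so that $\Phi(f_t)$ is exactly the truncation $\Phi(f)_t$ of $\Phi(f) = \sum_k f(a_k)(X_i)^{n(a_k)}$. Then $w(\Phi(f_t) - \Phi(f_{t-1})) \geq \lambda(a_t) \to \infty$, so $\{\Phi(f_t)\}$ is Cauchy and converges by completeness of $K\llbracket (X_i)_i\rrbracket$; its limit is $\Phi(f)$ since $\Phi(f) - \Phi(f_t)$ has no homogeneous component of degree $\leq \lambda(a_t)$. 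Because $\Phi$ is an isometry this yields $f_t \to f$, which at once proves that $K[\{\pi_i\}_i]$ is dense in $\AAa(M)$ and that $f = \sum_{k=1}^\infty f(a_k)\,\pi_1^{n_1(k)}\ast\pi_2^{n_2(k)}\ast\cdots$, so $\alpha_k = f(a_k)$ gives a valid representation.

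For uniqueness, if $f = \sum_k \alpha_k\,\pi_1^{n_1(k)}\ast\pi_2^{n_2(k)}\ast\cdots$ is any convergent expansion, applying the continuous homomorphism $\Phi$ and identifying $\Phi$ of the partial sums with the truncations above gives $\Phi(f) = \sum_k \alpha_k (X_i)^{n(a_k)}$ in $K\llbracket(X_i)_i\rrbracket$; since the coefficients of a formal power series are uniquely determined and the monomials $(X_i)^{n(a_k)}$ are pairwise distinct, $\alpha_k$ must equal the coefficient of $(X_i)^{n(a_k)}$ in $\Phi(f)$, namely $f(a_k)$. I do not expect a genuine obstacle: the only point requiring care is the bookkeeping in the first paragraph — verifying that $\Phi$ sends Dirichlet monomials in the $\pi_i$ to the corresponding power-series monomials and that the chosen enumeration of $M$ lists these monomials exactly once — after which the statement reduces entirely to the completeness and isometry results already proved.
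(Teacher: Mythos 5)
Your proposal is correct and follows essentially the same route as the paper: transport everything through the isometric isomorphism $\Phi$, take $\pi_i = \Phi^{-1}(X_i)$ (prime since $v(\pi_i)=1$), and use the degree-ordered enumeration of $M$ together with completeness to show the truncations $f_t$ converge to $f$. Your explicit uniqueness argument via the coefficients of the formal power series is a point the paper leaves implicit, but it is the natural completion of the same argument, not a different method.
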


\section{Ringed space structure}

The aim of the next two sections is to define a ringed space structure on the set of all totally multiplicative functions on the monoid of integral ideals in a Dedekind domain in such a way that there is an equivalence of categories between the category of Dedekind domains and the category of such ringed spaces. This will be similar to
the development of affine schemes, but with a few key differences. 

Let $A$ be a Dedekind domain. From now on, we write $\MMM(A)$ for $\MMM(I_A)$, and if 
$\varphi : A \to B$ is a homomorphism of Dedekind domains, we write $\varphi^\ast : \MMM(B) \to \MMM(A)$ for what we would have called $\varphi_{\text{e}}^\ast$ above.
Let $X = \MMM(A)$ and for $S \subset A^\ast$, define
$$
\VVV(S) = \{f \in X : \text{$f((a)) = 0$ for all $a \in S$}\}.
$$
Clearly $\VVV$ is inclusion reversing and satisfies the following easily verified properties.

\begin{proposition}\label{properties} \
\begin{enumerate}
\item[(a)] For $S, T \subset A^\ast$, define $ST = \{xy : x \in S, y \in T\}$. Then $\VVV(S) \cup \VVV(T) = \VVV(ST)$.
\item[(b)] If $\{S_i\}_i$ is a collection of subsets of $A^\ast$, then $\bigcap_i\VVV(S_i) = \VVV(\bigcup_i S_i)$.
\end{enumerate}
\end{proposition}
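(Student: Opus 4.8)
The plan is to prove each of the two set equalities by double inclusion, using nothing more than the definition of $\VVV$, the fact that each $f \in X$ is totally multiplicative, and that the identity $(xy) = (x)(y)$ holds for principal ideals. Since $\VVV$ was already observed to be inclusion-reversing, no containment among the $S_i$, $T$, etc.\ needs separate treatment.

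For part (a), I would first check $\VVV(S) \cup \VVV(T) \subseteq \VVV(ST)$. Take $f \in \VVV(S)$ (the case $f \in \VVV(T)$ being symmetric) and an arbitrary element $xy \in ST$ with $x \in S$ and $y \in T$; note $xy \in A^\ast$ because $A$ is a domain, so $(xy)$ is a genuine nonzero ideal. Then total multiplicativity gives $f((xy)) = f((x)(y)) = f((x))f((y)) = 0$, since $f((x)) = 0$. Hence $f \in \VVV(ST)$. For the reverse inclusion $\VVV(ST) \subseteq \VVV(S) \cup \VVV(T)$ I would argue by contraposition: if $f \notin \VVV(S)$ and $f \notin \VVV(T)$, choose $x \in S$ with $f((x)) \neq 0$ and $y \in T$ with $f((y)) \neq 0$; then $f((xy)) = f((x))f((y)) \neq 0$ because $K$ is a field and therefore has no zero divisors, so $f \notin \VVV(ST)$. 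This is the only place where the field hypothesis on $K$ (equivalently, the absence of zero divisors) and total multiplicativity are both genuinely used, so it is the one step deserving attention; everything else is formal.

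For part (b), the argument is a direct unwinding of quantifiers: $f \in \bigcap_i \VVV(S_i)$ holds exactly when $f \in \VVV(S_i)$ for every $i$, i.e.\ when $f((a)) = 0$ for every $i$ and every $a \in S_i$, which is literally the condition that $f((a)) = 0$ for every $a \in \bigcup_i S_i$, i.e.\ $f \in \VVV(\bigcup_i S_i)$. No choice or finiteness is involved, so an arbitrary index set is fine.

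I anticipate no real obstacle here; the proposition is essentially bookkeeping, with the mild subtlety in part (a) being that one must invoke $K$ being an integral domain to conclude $f((x))f((y)) \neq 0$ from $f((x)), f((y)) \neq 0$.
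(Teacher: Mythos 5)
Your proof is correct and follows essentially the same argument as the paper: both directions of (a) rest on $f((xy)) = f((x))f((y))$ together with $K$ having no zero divisors (the paper phrases the reverse inclusion as ``$f \in \VVV(ST)$ and $f \notin \VVV(S)$ implies $f \in \VVV(T)$,'' which is logically the same as your contraposition), and (b) is the same quantifier unwinding.
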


We also have $\VVV(\{1\}) = \varnothing$ and $\VVV(\varnothing) = X$, so the proposition shows that the sets $\VVV(S)$ for $S \subset A^\ast$ are the closed sets for a topology on $X$. If $f \in X$ is defined by $f(\fraka) = 1$ for all $\fraka \in I_A$, then $f \notin \VVV(S)$ for any nonempty $S \subset A^\ast$, which means $X$ is irreducible. In particular, $X$ is connected and not Hausdorff ($X$ contains more than one point). If $S \subset A^\ast$ contains no units, then $\VVV(S) \neq \varnothing$ since $f \in \VVV(S)$, where $f(\frakp) = 0$
for all $\frakp \in \Max(A)$. If $S$ contains a unit, then $\VVV(S) = \varnothing$.

For $a \in A^\ast$, define an open set
$$
\DDD(a) = \{f \in X : f((a)) \neq 0\} = X\sm \VVV(\{a\}).
$$
When two spaces are involved, we will sometimes write $\DDD_X(a)$.
If $u \in A^\times$ then $\DDD(u) = X$.

\begin{proposition} For $x, y \in A^\ast$ we have $\DDD(x) \cap \DDD(y) = \DDD(xy)$. The open sets $\DDD(a)$, for $a \in A^\ast$, form a basis for the topology on $X$.
\end{proposition}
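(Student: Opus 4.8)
The plan is to treat the two assertions separately, as each is short. For the identity $\DDD(x) \cap \DDD(y) = \DDD(xy)$, the key observation is that for any totally multiplicative $f \in X$ one has $f((xy)) = f((x)(y)) = f((x))f((y))$, using that $(xy) = (x)(y)$ as ideals of $A$ so that total multiplicativity applies. Since $K$ is a field, hence an integral domain, the product $f((x))f((y))$ is nonzero if and only if both $f((x)) \neq 0$ and $f((y)) \neq 0$. Unwinding the definition $\DDD(a) = \{f \in X : f((a)) \neq 0\}$, this says precisely that $f \in \DDD(xy)$ iff $f \in \DDD(x)$ and $f \in \DDD(y)$, which is the claimed equality.

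For the basis claim, I would first recall that each $\DDD(a) = X \sm \VVV(\{a\})$ is open by the very definition of the topology on $X$, since $\VVV(\{a\})$ is closed. It then suffices to show that every open subset of $X$ is a union of sets of the form $\DDD(a)$. An arbitrary open set has the form $U = X \sm \VVV(S)$ for some $S \subset A^\ast$. Applying Proposition \ref{properties}(b) to the family $\{\{a\}\}_{a \in S}$ gives $\VVV(S) = \bigcap_{a \in S}\VVV(\{a\})$, and therefore $U = \bigcup_{a \in S}\bigl(X \sm \VVV(\{a\})\bigr) = \bigcup_{a \in S}\DDD(a)$. This exhibits every open set as a union of the sets $\DDD(a)$, which is exactly what it means for $\{\DDD(a) : a \in A^\ast\}$ to be a basis; together with the first part (finite intersections of basic opens are again basic opens) and the fact that $X = \DDD(u)$ for any unit $u$, all the standard basis conditions are verified.

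There is no substantial obstacle here. The only points requiring any care are remembering that $(xy) = (x)(y)$ so that total multiplicativity can be invoked, and using that $K$ being a field forces $f((x))f((y)) \neq 0$ exactly when each factor is nonzero; the remainder is formal bookkeeping with $\VVV$, Proposition \ref{properties}, and the definition of the Zariski-type topology.
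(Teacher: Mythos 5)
Your proof is correct and follows essentially the same route as the paper: total multiplicativity gives $f((xy)) = f((x))f((y))$ and the field $K$ has no zero divisors, yielding $\DDD(x)\cap\DDD(y)=\DDD(xy)$, and writing $\VVV(S)=\bigcap_{a\in S}\VVV(\{a\})$ exhibits any open set as a union of the $\DDD(a)$. No gaps; the extra remarks about $(xy)=(x)(y)$ and $X=\DDD(u)$ for a unit $u$ are fine but not needed beyond what the paper records.
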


\begin{proof} We have $f \in \DDD(xy)$ if and only if $f((x))f((y)) = f((xy)) \neq 0$, if and only if $f((x)) \neq 0$ and $f((y)) \neq 0$, which is equivalent to
$f \in \DDD(x) \cap \DDD(y)$. Suppose $U \subset X$ is open, so $U = X \sm \VVV(S)$ for some $S \subset A^\ast$. Then 
$S = \bigcup_{a \in S}\{a\}$ gives $\VVV(S) = \bigcap_{a \in S}\VVV(\{a\})$ and hence
$U = \bigcup_{a \in S}\DDD(a)$. This shows $\{\DDD(a) : a \in A^\ast\}$ is a basis for the topology on $X$.
\end{proof}

For each subset $Y \subset X$, define a set
$$
\III(Y) = \{a \in A^\ast : \text{$f((a)) = 0$ for all $f \in Y$}\}.
$$
This subset of $A$ is not an ideal, but is closed under $A^\ast$-scaling: $xa \in \III(Y)$ for all $x \in A^\ast$ and $a \in \III(Y)$. The operation $\III$ is inclusion reversing and for any $S \subset A^\ast$, we have $\VVV(\III(\VVV(S))) = \VVV(S)$. The same proof as for affine varieties shows $\VVV(\III(Y)) = \overline{Y}$, the closure of $Y$ in $X$. Additionally, an analogous argument to the affine variety case shows that a closed subset $Y \subset X$ is irreducible if and only if $\III(Y)$ is ``prime" in the sense that if $aa' \in \III(Y)$ then $a \in \III(Y)$ or $a' \in \III(Y)$.

\begin{proposition}
Let $\varphi : A \to B$ be a homomorphism between Dedekind domains and let $X = \MMM(B)$ and $Y = \MMM(A)$. If $a \in A^\ast$ then $(\varphi^\ast)^{-1}(\DDD_Y(a)) = \DDD_X(\varphi(a))$. In particular, $\varphi^\ast : X \to Y$ is continuous.
\end{proposition}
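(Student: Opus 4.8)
The plan is to reduce the whole statement to the single identity $\varphi^\ast(g)((a)) = g((\varphi(a)))$ for $g \in \MMM(B)$ and $a \in A^\ast$, and then chase the definitions of the basic open sets through it.

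First I would unwind $\varphi^\ast$. By definition $\varphi^\ast(g) = g \circ \varphi_{\text{e}}$, where $\varphi_{\text{e}} : I_A \to I_B \cup \{(0)\}$ is extension of ideals, $\varphi_{\text{e}}(\fraka) = \varphi(\fraka)B$. Applying this to the principal ideal $(a) = aA$, and using $\varphi(A) \subseteq B$ together with $\varphi(1) = 1$, one gets $\varphi_{\text{e}}((a)) = \varphi(aA)B = \varphi(a)B$, which is the principal ideal of $B$ generated by $\varphi(a)$. Hence $\varphi^\ast(g)((a)) = g((\varphi(a)))$, with the understanding that the right-hand side equals $g((0)) = 0$ when $\varphi(a) = 0$ (which can happen if $\varphi$ is not injective).

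Next I would chase the memberships: $g \in (\varphi^\ast)^{-1}(\DDD_Y(a))$ holds if and only if $\varphi^\ast(g) \in \DDD_Y(a)$, i.e.\ $\varphi^\ast(g)((a)) \neq 0$, i.e.\ $g((\varphi(a))) \neq 0$, i.e.\ $g \in \DDD_X(\varphi(a))$. When $\varphi(a) \neq 0$ the last condition is exactly membership in the basic open set $\DDD_X(\varphi(a))$; when $\varphi(a) = 0$ both the first and the last set are empty, since $g((0)) = 0$ for every $g$, so the equality $(\varphi^\ast)^{-1}(\DDD_Y(a)) = \DDD_X(\varphi(a))$ holds in this degenerate case as well, reading $\DDD_X(0) = \varnothing$. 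This establishes the displayed equality.

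For continuity: the preceding proposition says the sets $\DDD_Y(a)$ with $a \in A^\ast$ form a basis for the topology on $Y$, and we have just shown that the preimage of each such basic open set is $\DDD_X(\varphi(a))$, which is open in $X$ (a basic open set, or $\varnothing$). Since the preimage of every member of a basis is open, $\varphi^\ast$ is continuous. The only point requiring care is the bookkeeping around extension of ideals — checking $\varphi(aA)B = \varphi(a)B$ and handling the case $\varphi(a) = 0$ — but this is routine, and there is no genuine obstacle: all the content sits in the identity $\varphi^\ast(g)((a)) = g((\varphi(a)))$.
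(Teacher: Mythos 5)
Your proof is correct and follows essentially the same route as the paper: unwind $\varphi^\ast$ through extension of ideals to get $\varphi^\ast(g)((a)) = g(\varphi(a)B)$, chase memberships to obtain $(\varphi^\ast)^{-1}(\DDD_Y(a)) = \DDD_X(\varphi(a))$, and conclude continuity from the fact that the $\DDD_Y(a)$ form a basis. Your explicit handling of the degenerate case $\varphi(a) = 0$ (via the convention $g((0)) = 0$) is a small extra precaution the paper passes over silently, but it does not change the argument.
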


\begin{proof}
We have
$$
g \in (\varphi^\ast)^{-1}(\DDD_Y(a)) \quad \Longleftrightarrow \quad g\big(\varphi_{\text{e}}((a))\big) = 
g(\varphi(a)B) \neq 0 \quad \Longleftrightarrow \quad g \in \DDD_X(\varphi(a)).
$$
Since the open sets $\DDD_Y(a)$ form a basis for the topology on $Y$, this shows $\varphi^\ast$ is continuous.
\end{proof}

\begin{proposition}
Suppose $x \in A^\ast$ and $\{y_i\}_i \subset A^\ast$. If $\DDD(x) \subset \bigcup_i\DDD(y_i)$ then $\DDD(x) \subset \DDD(y_i)$ for some $i$. Each set $\DDD(x)$, and in particular $\MMM(A)$, is quasi-compact.
\end{proposition}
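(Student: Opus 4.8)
The plan is to reduce the statement to the combinatorics of prime divisors and then produce a single distinguished point of $\DDD(x)$ that detects all of the $\DDD(y_i)$ simultaneously. For $a \in A^\ast$, write $\Sppt(a) \subset \Max(A)$ for the (finite) set of prime ideals dividing $(a)$. Since each $f \in X$ is totally multiplicative, $f((a)) = \prod_{\frakp \in \Sppt(a)} f(\frakp)^{\ord_\frakp(a)}$, so $f \in \DDD(a)$ if and only if $f(\frakp) \neq 0$ for every $\frakp \in \Sppt(a)$. Two consequences are immediate: $\DDD(a)$ depends only on $\Sppt(a)$, and if $\Sppt(y) \subseteq \Sppt(x)$ then $\DDD(x) \subseteq \DDD(y)$.

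First I would introduce the point $f_x \in X$ defined --- as permitted by the observation (after the definition of totally multiplicative function) that such a function may be prescribed arbitrarily on primes --- by $f_x(\frakp) = 1$ for $\frakp \in \Sppt(x)$ and $f_x(\frakp) = 0$ for all other $\frakp \in \Max(A)$. Then $f_x((x)) = 1 \neq 0$, so $f_x \in \DDD(x)$; conversely, $f_x((y)) \neq 0$ can hold only if every prime dividing $(y)$ lies in $\Sppt(x)$, i.e.\ only if $\Sppt(y) \subseteq \Sppt(x)$. Now suppose $\DDD(x) \subseteq \bigcup_i \DDD(y_i)$. Then $f_x$ lies in $\DDD(y_{i_0})$ for some $i_0$, whence $\Sppt(y_{i_0}) \subseteq \Sppt(x)$, and therefore $\DDD(x) \subseteq \DDD(y_{i_0})$ by the first paragraph. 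This establishes the first assertion.

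Quasi-compactness of $\DDD(x)$ then follows formally. Given an arbitrary open cover $\{U_\alpha\}$ of $\DDD(x)$, each $U_\alpha$ is a union of basic opens $\DDD(b)$, so $\DDD(x)$ is covered by a family $\{\DDD(b) : b \in S\}$ in which every member is contained in some $U_\alpha$; applying the first assertion to this cover yields a single $b_0 \in S$ with $\DDD(x) \subseteq \DDD(b_0) \subseteq U_{\alpha_0}$, so $\{U_{\alpha_0}\}$ is already a finite subcover. Finally, taking $x$ to be any unit gives $\DDD(x) = X$, so $\MMM(A) = X$ is quasi-compact as well.

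I do not expect a genuine technical obstacle; the one step that takes a moment of thought is choosing the right test point. The constant function $f \equiv 1$ lies in every $\DDD(y_i)$ and so detects nothing, whereas $f_x$ --- the most degenerate function still belonging to $\DDD(x)$ --- is precisely what forces $\Sppt(y_i) \subseteq \Sppt(x)$. The only incidental points to check are that $\Sppt(a)$ is finite and that an arbitrary assignment of values on $\Max(A)$ determines an element of $X$, both immediate from the Dedekind hypothesis and the description of totally multiplicative functions in Section~2.
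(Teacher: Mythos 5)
Your proof is correct, but its mechanism differs from the paper's in a way worth noting. The paper argues by contraposition: assuming $\DDD(x) \not\subset \DDD(y_i)$ for every $i$, it chooses a witness $f_i \in \DDD(x) \sm \DDD(y_i)$ for each index and amalgamates all of their zero sets into one totally multiplicative function $f$ (with $f(\frakp) = 0$ exactly when some $f_i(\frakp) = 0$), which then lies in $\DDD(x)$ but in no $\DDD(y_i)$. You instead argue directly with a single canonical test point, the indicator $f_x$ of the support of $x$, and extract the clean combinatorial criterion that $f_x \in \DDD(y)$ forces $\Sppt(y) \subseteq \Sppt(x)$, which in turn forces $\DDD(x) \subseteq \DDD(y)$; in effect you show $\DDD(x) \subseteq \DDD(y)$ if and only if $\Sppt(y) \subseteq \Sppt(x)$. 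Both proofs rest on the same enabling fact that a totally multiplicative function may be prescribed arbitrarily on $\Max(A)$, but your route avoids the simultaneous choice of witnesses $f_i$ over a possibly infinite index set, replaces the contrapositive with a direct argument, and makes the containment criterion between basic opens explicit, which is a small structural gain; the paper's construction, on the other hand, generalizes more readily to showing that an arbitrary family of failures can be realized by one point. Your deduction of quasi-compactness from the first assertion (refine an open cover by basic opens and apply the one-set conclusion) is exactly the paper's, just spelled out, and your handling of the unit case for $\MMM(A) = \DDD(1)$ is consistent with the paper's remark that $\DDD(u) = X$ for $u \in A^\times$.
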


\begin{proof}
Suppose $\DDD(x) \not\subset \DDD(y_i)$ for all $i$. Then for each $i$ there exists $f_i \in \DDD(x)$ such that $f_i \notin \DDD(y_i)$. Define $f \in \MMM(A)$ by 
$$
f(\frakp) = 
\left\{\begin{array}{ll}
0 & \text{if $f_i(\frakp) = 0$ for some $i$} \\
1 & \text{if $f_i(\frakp) \neq 0$ for all $i$}
\end{array} \right.
\vspace{3mm}
$$
for each $\frakp \in \Max(A)$.
Then $f((x)) = 1$ but $f((y_i)) = 0$ for all $i$, so $\DDD(x) \not\subset \bigcup_i\DDD(y_i)$.
Since the sets $\DDD(a)$ for $a \in A^\ast$ form a basis for the topology on $\MMM(A)$, the first part shows $\DDD(x)$ is quasi-compact.
\end{proof}

For any $a \in A$, let $D(a) = \{\frakp \in \Spec(A) : a \notin \frakp\}$ be the usual principal open subset of $\Spec(A)$. The following result relates $\Spec(A)$ and $\MMM(A)$ and will allow us to define a sheaf of rings on $\MMM(A)$.

\begin{proposition}\label{spec}
If $X = \MMM(A)$ and $Z = \Spec(A)$, there is a topological embedding $F : Z \to X$.\end{proposition}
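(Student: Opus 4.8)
The plan is to write $F$ down explicitly. For each $\frakp \in \Max(A)$ I would take $F(\frakp) \in \MMM(A)$ to be the totally multiplicative function with $F(\frakp)(\frakp) = 0$ and $F(\frakp)(\frakq) = 1$ for every $\frakq \in \Max(A)$ with $\frakq \neq \frakp$, and I would send the generic point to $F((0)) = u$, where $u(\fraka) = 1$ for all $\fraka \in I_A$. (Equivalently, $F(P)$ is the indicator function of the submonoid of $I_A$ consisting of those ideals not divisible by any prime of $V(P) \subset \Spec(A)$.) That $F$ is injective is immediate: if $P \neq Q$ then $F(P)$ and $F(Q)$ have different prime ideal zero sets, namely $\{P\} \cap \Max(A)$ and $\{Q\} \cap \Max(A)$.

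The main step is to prove the equivalence
\[
F(P) \in \DDD(a) \iff P \in D(a)
\]
for all $a \in A^\ast$ and all $P \in \Spec(A)$. When $P = (0)$ both sides hold unconditionally, since $u((a)) = 1 \neq 0$ and $a \notin (0)$. When $P = \frakp \in \Max(A)$, I would factor $(a) = \prod_\frakq \frakq^{\ord_\frakq(a)}$ as a finite product over $\Max(A)$; total multiplicativity of $F(\frakp)$ gives $F(\frakp)((a)) = \prod_\frakq F(\frakp)(\frakq)^{\ord_\frakq(a)}$, and by the definition of $F(\frakp)$ this product equals $1$ if $\frakp$ does not occur in the factorization of $(a)$ and equals $0$ if it does. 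Hence $F(\frakp) \in \DDD(a)$ exactly when $\frakp \nmid (a)$, i.e. when $a \notin \frakp$, i.e. when $\frakp \in D(a)$.

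Granting this equivalence, I would finish as follows. Taking preimages gives $F^{-1}(\DDD(a)) = D(a)$ for every $a \in A^\ast$; since the sets $\DDD(a)$ form a basis for the topology on $X = \MMM(A)$, this shows $F$ is continuous. Taking images gives $F(D(a)) = \DDD(a) \cap F(Z)$; since the sets $D(a)$ form a basis for the topology on $Z = \Spec(A)$, this shows $F$ maps every basic open set of $Z$ onto an open subset of the subspace $F(Z) \subset X$, so $F$ is an open map onto its image. An injective, continuous map that is open onto its image is a topological embedding, which is the assertion.

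I do not anticipate a genuine difficulty here; the only points that need attention are that the displayed equivalence must be checked in both directions, so that $F(D(a))$ equals $\DDD(a) \cap F(Z)$ rather than merely being contained in it, and that the generic point $(0)$ should be carried along separately at each stage, since it lies in every $D(a)$ and its image lies in every $\DDD(a)$.
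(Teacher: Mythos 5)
Your proof is correct and is essentially the paper's own argument: the same explicit map $\frakp \mapsto f_\frakp$ (with $(0)$ going to the all-ones function), injectivity via distinct zero sets, and the identities $F^{-1}(\DDD(a)) = D(a)$ and $F(D(a)) = \DDD(a) \cap F(Z)$ giving continuity and openness onto the image. The paper is merely terser, leaving the factorization check $F(\frakp)((a)) \neq 0 \iff a \notin \frakp$ and the treatment of the generic point implicit.
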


\begin{proof}
Define $F : Z \to X$ by $F(\frakp) = f_\frakp$, where $f_\frakp(\frakp) = 0$ and $f_\frakp(\frakq) = 1$ for all nonzero primes $\frakq \neq \frakp$. Clearly $F$ is injective. For $a \in A^\ast$,
$$
F^{-1}(\DDD(a)) = \{\frakp \in Z : a \notin \frakp\} = D(a),
$$
so $F$ is continuous. Also,
$$
F(D(a)) = \{f_\frakp : a \notin \frakp\} = \im(F) \cap \DDD(a),
$$
which implies $F$ is a homeomorphism onto its image. 
\end{proof}

\begin{definition}
Let $X = \MMM(A)$, $Z = \Spec(A)$, and $F : Z \to X$ the map defined in Proposition \ref{spec}. Define a sheaf of rings on $X$ by $\OOO_X = F_\ast\OOO_Z$.
\end{definition}

By what we saw in the proof of Proposition \ref{spec}, for any $a \in A^\ast$ we have
$$
\OOO_X(\DDD(a)) = \OOO_Z(F^{-1}(\DDD(a))) = \OOO_Z(D(a)) = A_a.
$$
If $x, y \in A^\ast$ and $\DDD(x) \subset \DDD(y)$, then $D(x) \subset D(y)$ and the restriction homomorphism
$$
\OOO_X(\DDD(y)) = A_y \to A_x = \OOO_X(\DDD(x))
$$
is induced by the corresponding map for $\OOO_Z$, and hence is given by
$a/y^n \mapsto ay^{-n}/1$.

Next we consider the structure of the stalk $\OOO_{X, f}$ for $f \in X$. Define $$\ZZZ(f) = \{\frakp \in \Max(A) : f(\frakp) = 0\}.$$
Note that if $f, g \in X$ and $\ZZZ(f) = \ZZZ(g)$, then $\III(\{f\}) = \III(\{g\})$.

\begin{proposition}\label{stalk}
Let $f \in X$. \\
{\upshape (a)} If $\ZZZ(f) = \varnothing$ then $\OOO_{X, f} = \text{\upshape Frac}(A)$. \\
{\upshape (b)} If $\ZZZ(f) = \{\frakp\}$ then $\OOO_{X, f} = A_\frakp$. \\
{\upshape (c)} If $\ZZZ(f)$ is finite then $\OOO_{X, f}$ is a semi-local {\upshape PID}. \\
{\upshape (d)} If $\ZZZ(f) = \Max(A)$ then $\OOO_{X, f} = A$.
\end{proposition}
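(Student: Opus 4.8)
The plan is to compute the stalk directly and realize it as a ring of fractions of $A$ sitting inside $\text{Frac}(A)$. Since the sets $\DDD(a)$ for $a \in A^\ast$ form a basis for the topology on $X$, those containing a given $f$ are cofinal among all open neighborhoods of $f$, so
$$
\OOO_{X,f} \;=\; \varinjlim_{f \in \DDD(a)} \OOO_X(\DDD(a)) \;=\; \varinjlim_{f \in \DDD(a)} A_a .
$$
As recorded just after the definition of $\OOO_X$, whenever $\DDD(x) \subseteq \DDD(y)$ the transition map $A_y \to A_x$ is the evident localization map, which for the domain $A$ is the inclusion of subrings of $\text{Frac}(A)$; and the indexing poset is directed because $\DDD(x) \cap \DDD(y) = \DDD(xy)$. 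Hence this filtered colimit of subrings of $\text{Frac}(A)$ with inclusion transition maps is simply their union, $\OOO_{X,f} = \bigcup_{f \in \DDD(a)} A_a \subseteq \text{Frac}(A)$.

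Next I would identify exactly which $a$ contribute. Writing $(a) = \frakp_1^{e_1}\cdots\frakp_k^{e_k}$ as a product of distinct (necessarily maximal) primes and using that $f$ is totally multiplicative, $f((a)) = \prod_i f(\frakp_i)^{e_i}$, so $f \in \DDD(a)$ if and only if no prime divisor of $(a)$ lies in $\ZZZ(f)$; since $\ZZZ(f) \subseteq \Max(A)$, this is the same as $\ZZZ(f) \subseteq D(a)$. Equivalently, setting
$$
S_f \;=\; \{\, a \in A^\ast : a \notin \frakp \text{ for all } \frakp \in \ZZZ(f)\,\},
$$
we have $f \in \DDD(a) \iff a \in S_f$. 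The set $S_f$ is multiplicatively closed and contains $1$ (and never $0$: if $\ZZZ(f) = \EM$ then $S_f = A^\ast$, while otherwise $0$ lies in every member of $\ZZZ(f)$). Therefore $\OOO_{X,f} = \bigcup_{a \in S_f} A_a = S_f^{-1}A$ inside $\text{Frac}(A)$.

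Finally I would read off the four cases. If $\ZZZ(f) = \EM$ then $S_f = A^\ast$ and $\OOO_{X,f} = \text{Frac}(A)$, proving (a). If $\ZZZ(f) = \{\frakp\}$ then $S_f = A \sm \frakp$ and $\OOO_{X,f} = A_\frakp$, proving (b). If $\ZZZ(f) = \Max(A)$ then $S_f = A^\times$, since every nonunit of $A$ lies in some maximal ideal, so $\OOO_{X,f} = A$, proving (d). For (c), let $\ZZZ(f) = \{\frakp_1,\ldots,\frakp_n\}$ and $R := S_f^{-1}A = \big(A \sm (\frakp_1 \cup \cdots \cup \frakp_n)\big)^{-1}A$, a localization of the Dedekind domain $A$. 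It is not a field, since $\frakp_1$ is disjoint from $S_f$ and hence $\frakp_1 R$ is a nonzero proper prime ideal, so $R$ is again a Dedekind domain. The primes of $R$ correspond to the primes of $A$ disjoint from $S_f$; a nonzero such prime is a maximal ideal $\frakq \subseteq \frakp_1 \cup \cdots \cup \frakp_n$, so by prime avoidance $\frakq = \frakp_i$ for some $i$. Thus $R$ has exactly the maximal ideals $\frakp_1 R,\ldots,\frakp_n R$, so it is a semilocal Dedekind domain, hence a PID, which proves (c).

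The load-bearing steps are the identification of the filtered colimit of $\OOO_X$ over basic opens with the honest union of localizations in $\text{Frac}(A)$ — which rests on the explicit restriction maps of $\OOO_X$ and on directedness via $\DDD(x)\cap\DDD(y)=\DDD(xy)$ — and, in case (c), the two standard facts that a localization of a Dedekind domain is either a field or a Dedekind domain and that a semilocal Dedekind domain is a principal ideal domain. Everything else is bookkeeping with $\ZZZ(f)$.
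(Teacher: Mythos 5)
Your proposal is correct and follows essentially the same route as the paper: identify the stalk as the filtered colimit over the basic opens $\DDD(a)$ containing $f$, recognize the index set as the multiplicative set $S = \{a \in A^\ast : f((a)) \neq 0\} = A \sm \bigcup_{\frakp \in \ZZZ(f)}\frakp$, conclude $\OOO_{X,f} = S^{-1}A$, and read off the four cases, with (c) following because a semi-local localization of a Dedekind domain is a PID. You simply supply more detail (directedness, inclusion transition maps inside $\text{Frac}(A)$, prime avoidance) than the paper's terser argument.
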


\begin{proof}
Let $S = \{a \in A^\ast : f((a)) \neq 0\}$, which is a multiplicative subset of $A$. Then
$$
\OOO_{X, f} = \dlim_{\DDD(a) \ni f}\OOO_X(\DDD(a)) = \dlim_{a \in S}A_a = S^{-1}A.
$$
If $\ZZZ(f) = \varnothing$ then $S = A^\ast$, so $\OOO_{X, f} = \text{Frac}(A)$. If $\ZZZ(f) = \{\frakp\}$ then $S = \{a \in A^\ast : \frakp \nmid (a)\} = A \sm \frakp$, and hence $\OOO_{X, f} = A_\frakp$. If $\ZZZ(f) = \{\frakp_1, \ldots, \frakp_r\}$ then 
$$
S = \{a \in A^\ast : \text{$\frakp_i \nmid (a)$ for all $i$}\} = A \sm \bigcup_{i=1}^r\frakp_i.
$$
In this case $\OOO_{X, f} = S^{-1}A$ has maximal ideals $S^{-1}\frakp_1, \ldots, S^{-1}\frakp_r$, and is thus a semi-local Dedekind domain, hence a PID. If $\ZZZ(f) = \Max(A)$ then $S = A^\times$ and hence $\OOO_{X, f} = A$. 
\end{proof}

For any homomorphism $\varphi : A \to B$, we will define a corresponding morphism of ringed spaces $(X, \OOO_X) \to (Y, \OOO_Y)$, where $X = \MMM(B)$ and $Y = \MMM(A)$.
Let $W = \Spec(B)$, $Z = \Spec(A)$, and $(\varphi^{\text{a}}, \varphi^{\natural}) : (W, \OOO_W) \to (Z, \OOO_Z)$ the usual morphism of affine schemes.
On topological spaces, we use $\varphi^\ast : X \to Y$. If $F_A : Z \to Y$ and $F_B : W \to X$ are the maps from Proposition \ref{spec}, the diagram
$$
\xymatrix{
W \ar[rr]^>>>>>>>>>>{\varphi^{\text{a}}} \ar[d]_{F_B} & & Z \ar[d]^{F_A} \\
X \ar[rr]^>>>>>>>>>>{\varphi^\ast} && Y
}
$$
commutes, so there is an induced morphism
$$
\varphi^{\#} = (F_A)_\ast(\varphi^{\natural}) : \OOO_Y = (F_A)_\ast\OOO_Z \to (F_A)_\ast\varphi_\ast^{\text{a}}\OOO_W = (\varphi^\ast)_{\ast}\OOO_X,
$$
giving a morphism of ringed spaces $(\varphi^\ast, \varphi^{\#}) : (X, \OOO_X) \to (Y, \OOO_Y)$. Following through the definitions shows 
$$
\varphi^{\#}(\DDD_Y(a)) : \Gamma(\DDD_Y(a), \OOO_Y) = A_a \to B_{\varphi(a)} = \Gamma(\DDD_Y(a), (\varphi^\ast)_\ast\OOO_X)
$$
is the ring homomorphism induced by $\varphi$: $x/a^n \mapsto \varphi(x)/\varphi(a)^n$.

Note that an element $x/a^n \in \OOO_Y(\DDD_Y(a))$ can be viewed as a function $\alpha : \DDD_Y(a) \to K$ by $$\alpha(f) = \frac{f((x))}{f((a))^n}.$$ If $\DDD_Y(a) \subset \DDD_Y(a')$ then the restriction homomorphism $\OOO_Y(\DDD_Y(a')) \to \OOO_Y(\DDD_Y(a))$ corresponds to restriction of functions $\alpha \mapsto \alpha|_{\DDD_Y(a)}$. Also, if 
$\alpha : \DDD_Y(a) \to K$ is determined by $x/a^n \in \OOO_Y(\DDD_Y(a))$ and
$\beta : \DDD_X(\varphi(a)) \to K$ is determined by the element $\varphi^{\#}(\DDD_Y(a))(x/a^n)$, then $\beta = \alpha \circ \varphi^\ast|_{\DDD_X(\varphi(a))}$.

For any $a \in A$, there is a homeomorphism $\Spec(A_a) \to D(a)$. The following result shows that the analogous map for $\MMM(A)$ is not necessarily surjective.

\begin{proposition}\label{embedding}
Let $a \in A^\ast$. There is a topological embedding $\MMM(A_a) \to \DDD(a)$ with image $$Y = \{f \in X : \text{$f(\frakp) = 1$ for all $\frakp \mid (a)$}\},$$
which defines an isomorphism of ringed spaces $(\MMM(A_a), \OOO_{\MMM(A_a)}) \to
(Y, \OOO_X|_Y)$, where $X = \MMM(A)$.
\end{proposition}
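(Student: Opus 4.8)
The plan is to obtain the embedding directly from the localization map $\varphi : A \to A_a$, using the constructions developed above. Write $\iota = \varphi^\ast : \MMM(A_a) \to \MMM(A) = X$. The primes of $A_a$ are exactly the ideals $\frakp A_a$ with $\frakp \in \Max(A)$ and $\frakp \nmid (a)$, while $\frakq A_a = (1)$ whenever $\frakq \mid (a)$, and $\frakp \mapsto \frakp A_a$ is a bijection $\{\frakp \in \Max(A) : \frakp \nmid (a)\} \to \Max(A_a)$. From this I would first check that $\iota$ is a bijection onto $Y$: for $g \in \MMM(A_a)$ one has $\iota(g)(\frakq) = g(\frakq A_a) = g((1)) = 1$ for every $\frakq \mid (a)$, so $\iota(g) \in Y$; conversely, given $f \in Y$, the assignment $\frakp A_a \mapsto f(\frakp)$ (for $\frakp \nmid (a)$) extends to a unique totally multiplicative $g$ on $I_{A_a}$, and $\iota(g) = f$ since $f(\frakq) = 1$ for $\frakq \mid (a)$; injectivity is immediate because totally multiplicative functions are determined by their values on primes. (If $A_a$ is a field, which happens exactly when $A$ is semilocal and every maximal ideal divides $(a)$, then $\MMM(A_a)$ and $Y$ are both one-point spaces with structure ring $\text{Frac}(A)$ and there is nothing to prove; so assume $A_a$ is a Dedekind domain.)

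Next I would handle the topology. By the continuity result for induced maps established above, $\iota$ is continuous and $(\varphi^\ast)^{-1}(\DDD_X(c)) = \DDD_{\MMM(A_a)}(\varphi(c))$ for $c \in A^\ast$. Every basic open of $\MMM(A_a)$ has this form: an element $c/a^n \in A_a^\ast$ generates the same ideal of $A_a$ as $\varphi(c) = c/1$, so $\DDD_{\MMM(A_a)}(c/a^n) = \DDD_{\MMM(A_a)}(\varphi(c))$. Since $\iota : \MMM(A_a) \to Y$ is a bijection, it carries $\DDD_{\MMM(A_a)}(\varphi(c))$ onto $\DDD_X(c) \cap Y$, which is a basic open of the subspace $Y$; hence $\iota$ is an open continuous bijection onto $Y$, i.e. a homeomorphism.

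For the structure sheaves I would use the morphism of ringed spaces $(\varphi^\ast, \varphi^{\#}) : (\MMM(A_a), \OOO_{\MMM(A_a)}) \to (\MMM(A), \OOO_X)$ attached to $\varphi$ above. Its underlying map has image $Y$, so it factors through $(Y, \OOO_X|_Y)$, and since $\iota$ is a homeomorphism onto $Y$ it suffices to see that the sheaf map is an isomorphism on stalks. Fix $g \in \MMM(A_a)$ and put $f = \iota(g) \in Y$. The stalk of $\OOO_X|_Y$ at $f$ is $\OOO_{X, f}$, and the stalk map is the one induced by $\varphi$, so by Proposition~\ref{stalk} it is the map $T_f^{-1}A \to S_g^{-1}A_a$, where $T_f = \{c \in A^\ast : f((c)) \neq 0\}$ and $S_g = \{b \in A_a^\ast : g((b)) \neq 0\}$. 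The computation that makes everything collapse is the identity $f((c)) = g(\varphi(c))$ for $c \in A^\ast$: extending the ideal $(c)$ to $A_a$ simply deletes the factors $\frakp^{v_\frakp(c)}$ with $\frakp \mid (a)$, and $f$ is $1$ on all such $\frakp$. Consequently $S_g = \{c/a^n : c \in T_f\}$, and since $a \in T_f$ (because $f \in Y$ gives $f((a)) = 1$), localizing $A_a = A[1/a]$ at $S_g$ returns exactly $T_f^{-1}A$ as a subring of $\text{Frac}(A)$; under this identification the stalk map is the identity, hence an isomorphism.

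The \textbf{main obstacle} is the structure-sheaf step, and the reason it needs care is that $Y$ is in general neither open nor closed in $X$, so $\OOO_X|_Y$ must be treated as the inverse-image (restriction) sheaf rather than read off from sections over opens of $X$; reducing the isomorphism claim to a statement about stalks, where Proposition~\ref{stalk} exhibits everything as an explicit localization of $A$, is what makes it tractable. A subsidiary point is to verify cleanly that $\OOO_{\MMM(A_a), g}$ and $\OOO_{X, f}$ are literally the same subring of $\text{Frac}(A)$ with the induced map being the identity, together with the minor bookkeeping in the degenerate case where $A_a$ is a field.
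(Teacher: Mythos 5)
Your proposal is correct and follows essentially the same route as the paper: it uses the map induced by the localization $A \to A_a$, identifies the image as $Y$ by matching values on primes, verifies the homeomorphism on the basic open sets $\DDD(c)$, and reduces the sheaf isomorphism to the stalk computation via Proposition~\ref{stalk}, showing $\OOO_{\MMM(A_a), g}$ and $\OOO_{X, f}$ coincide as subrings of $\operatorname{Frac}(A)$. Your explicit treatment of the degenerate case where $A_a$ is a field and of $S_g$ versus $T_f$ is just a more detailed rendering of the paper's ``it is easy to check'' step.
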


\begin{proof}
Let $X_a = \MMM(A_a)$. Note that $\Spec(A_a) = \{\frakp A_a : \frakp \in \Spec(A), a \notin \frakp\}$. Define $F : X_a \to X$ to be the map corresponding to the inclusion $A \to A_a$, so $F(g)(\frakp) = g(\frakp A_a)$ for any $\frakp \in \Max(A)$. If $f \in X$ satisfies $f(\frakp) = 1$ for all $\frakp \mid (a)$, define $g \in X_a$ by $g(\frakp A_a) = f(\frakp)$ for all $\frakp \in \Max(A)$. This is well-defined by the condition on $f$, and $f = F(g)$. Conversely, suppose $f = F(g)$ for some $g \in X_a$. If $\frakp \mid (a)$ then $\frakp A_a = A_a$ and hence
$f(\frakp) = g((1)) = 1$. This shows $F$ has the stated image $Y$. 

Suppose $g, g' \in X_a$ satisfy $F(g) = F(g')$. Then $g(\frakp A_a) = g'(\frakp A_a)$ for all $\frakp \in \Max(A)$, so $g = g'$ and $F$ is injective. Next, for $x \in A^\ast$,
$$
F^{-1}(\DDD_X(x)) = \{g \in X_a : F(g)((x)) \neq 0\} 
= \{g \in X_a : g(xA_a) \neq 0\} 
= \DDD_{X_a}(x/1),
$$
which shows $F$ is continuous. Now define $G : Y \to X_a$ by $G(f)(\frakp A_a) = f(\frakp)$. Then $G = F^{-1}$ and
for $x/a^n \in A_a^\ast$,
$$
G^{-1}(\DDD_{X_a}(x/a^n)) = G^{-1}(\DDD_{X_a}(x/1)) = \{f \in Y : f((x)) \neq 0\} = \DDD_X(x) \cap Y,
$$
which shows $F$ is a homeomorphism onto its image. 

Since $F = i^\ast$ where $i : A \to A_a$ is the inclusion, there is an induced morphism of sheaves $i^{\#}|_Y : \OOO_X|_Y \to F_\ast\OOO_{X_a}|_Y$. Using Proposition \ref{stalk}, it is easy to check that for any $g \in X_a$, $\OOO_{X_a, g} = \OOO_{X, F(g)}$ as subrings of $\text{Frac}(A)$, so $i^{\#}|_Y$ is an isomorphism.
\end{proof}

In establishing the equivalence between Dedekind domains and certain spaces of totally multiplicative arithmetic functions, we will restrict to ring homomorphisms satisfying the following property.

\begin{definition}
We say a homomorphism of domains $\varphi : R \to S$ is \textit{quasi-integral} if 
for any nonzero prime ideal $\frakq \subset S$, the prime ideal $\varphi^{-1}(\frakq) \subset R$ is nonzero.
\end{definition}

Any integral homomorphism is quasi-integral, but not conversely in general. For example, the inclusion $\ZZ \to \ZZ_{(2)}$ is quasi-integral, but not integral. Also, the property of being quasi-integral is preserved under composition. 

\begin{proposition}\label{zeros}
If $\varphi : A \to B$ is an injective, quasi-integral homomorphism of Dedekind domains, then for each $g \in \MMM(B)$, the map $\ZZZ(g) \to \ZZZ(\varphi^\ast(g))$ given by $\frakq \mapsto \varphi^{-1}(\frakq)$ is surjective.
\end{proposition}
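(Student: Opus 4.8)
The plan is to exploit unique factorization in the ideal monoid $I_B$: the hypothesis $\varphi^\ast(g)(\frakp) = g(\varphi(\frakp)B) = 0$ forces $g$ to vanish on one of the prime factors of $\varphi(\frakp)B$ in $B$, and then quasi-integrality together with the fact that every nonzero prime of the Dedekind domain $A$ is maximal lets us identify the contraction of that prime with $\frakp$.

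First I would record why the map is well-defined. Given $\frakq \in \ZZZ(g)$, quasi-integrality makes $\frakp := \varphi^{-1}(\frakq)$ a nonzero prime of $A$, hence $\frakp \in \Max(A)$ since $A$ is Dedekind. Because $\varphi$ is injective and $\frakp \neq (0)$, the extended ideal $\varphi(\frakp)B$ is a nonzero integral ideal of $B$, and from $\varphi(\frakp) \subset \frakq$ we get $\varphi(\frakp)B \subset \frakq$, i.e.\ $\frakq \mid \varphi(\frakp)B$ in $I_B$; writing $\varphi(\frakp)B = \frakq\,\frakb$ and using that $g$ is totally multiplicative, $\varphi^\ast(g)(\frakp) = g(\frakq)g(\frakb) = 0$, so $\frakp \in \ZZZ(\varphi^\ast(g))$.

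For surjectivity, take $\frakp \in \ZZZ(\varphi^\ast(g))$, so $\frakp \in \Max(A)$ and $\frakd := \varphi(\frakp)B$ satisfies $g(\frakd) = 0$. As above $\frakd$ is a nonzero integral ideal of $B$, and it is proper because $g((1)) = 1 \neq 0$. Factoring $\frakd = \frakq_1^{e_1}\cdots\frakq_n^{e_n}$ into primes of $B$ and using total multiplicativity, $0 = g(\frakd) = \prod_i g(\frakq_i)^{e_i}$; since $K$ is a field, $g(\frakq_j) = 0$ for some $j$, and I set $\frakq := \frakq_j \in \ZZZ(g)$. Then $\frakq \supset \frakd \supset \varphi(\frakp)$ gives $\varphi^{-1}(\frakq) \supset \frakp$, and since $\varphi^{-1}(\frakq)$ is a proper (prime) ideal of $A$ while $\frakp$ is maximal, $\varphi^{-1}(\frakq) = \frakp$. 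Thus $\frakq$ maps to $\frakp$, proving surjectivity.

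I do not expect a serious obstacle here. The points needing a little care are that $\varphi(\frakp)B$ is nonzero (from injectivity of $\varphi$) and proper (from $g$ vanishing on it), so that it admits a genuine prime factorization in $I_B$; and that quasi-integrality is precisely the hypothesis guaranteeing the contraction $\varphi^{-1}(\frakq)$ is a nonzero — therefore maximal — prime of $A$ rather than $(0)$, which is what lets maximality of $\frakp$ upgrade the containment $\varphi^{-1}(\frakq) \supset \frakp$ to an equality.
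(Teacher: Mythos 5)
Your proposal is correct and follows essentially the same route as the paper: use quasi-integrality and maximality of nonzero primes in $A$ for well-definedness, and for surjectivity factor the proper nonzero ideal $\varphi(\frakp)B$ in $I_B$ so that total multiplicativity forces $g$ to vanish on some prime factor $\frakq$, whose contraction equals $\frakp$ by maximality. The only cosmetic difference is that the paper disposes of the case $\ZZZ(g) = \varnothing$ separately, while your argument absorbs it automatically, and you make explicit the factorization step the paper states in one line.
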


\begin{proof}
First suppose $\ZZZ(g) = \varnothing$ and let $\frakp \in \ZZZ(\varphi^\ast(g))$. Then $\varphi^\ast(g)(\frakp) = g(\varphi(\frakp)B) = 0$, so $\varphi(\frakp)B \neq B$ and $\varphi(\frakp)B \neq (0)$ since $\varphi$ is injective. This implies there is a nonzero prime $\frakq \mid \varphi(\frakp)B$ such that $g(\frakq) = 0$, a contradiction. Therefore, if $\ZZZ(g) = \varnothing$ then $\ZZZ(\varphi^\ast(g)) = \varnothing$. Let $\frakq \in \ZZZ(g)$ and let
$\frakp = \varphi^{-1}(\frakq)$, which is nonzero. Then $\varphi(\frakp)B \subset \frakq$, so $\varphi(\frakp)B \neq B$ and $\frakq \mid \varphi(\frakp)B$. This implies $\varphi^\ast(g)(\frakp) = g(\varphi(\frakp)B) = 0$, which means $\frakp \in \ZZZ(\varphi^\ast(g))$, so contraction gives a map $\ZZZ(g) \to \ZZZ(\varphi^\ast(g))$. To show it is surjective, let $\frakp \in \ZZZ(\varphi^\ast(g))$, so $g(\varphi(\frakp)B) = 0$. As above, there is a nonzero prime $\frakq \mid \varphi(\frakp)B$ such that $g(\frakq) = 0$. Then $\frakp \subset \varphi^{-1}(\frakq)$ and since $\frakp$ is maximal, $\frakp = \varphi^{-1}(\frakq)$.
\end{proof}

Although the ringed space $(\MMM(A), \OOO_{\MMM(A)})$ is not locally ringed, its morphisms satisfy the following condition on stalks which will be our substitute for the usual local ring condition for morphisms of locally ringed spaces.

\begin{lemma}\label{stalk3}
Let $\varphi : A \to B$ be a homomorphism of Dedekind domains, $X = \MMM(B)$, $Y = \MMM(A)$, and $(\varphi^\ast, \varphi^{\#}) : (X, \OOO_X) \to (Y, \OOO_Y)$ the corresponding morphism of ringed spaces. For each $g \in X$, let $\varphi_g^{\#} : \OOO_{Y, \varphi^\ast(g)} \to \OOO_{X, g}$ be the corresponding stalk homomorphism. Then for each maximal ideal $\frakm \subset \OOO_{Y, \varphi^\ast(g)}$, the ideal generated by 
$\varphi^{\#}_g(\frakm)$ in $\OOO_{X, g}$ is proper.
\end{lemma}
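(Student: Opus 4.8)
The plan is to make both stalks explicit, reduce the assertion to a statement about extending the prime $\frakp$ from $A$ to $B$, and then extract that statement from the hypothesis $\frakm\neq\OOO_{Y,\varphi^\ast(g)}$ together with the multiplicativity of $g$.

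First I would record, exactly as in the proof of Proposition~\ref{stalk} (applied once to $A$ with $\varphi^\ast(g)$ and once to $B$ with $g$), that $\OOO_{Y,\varphi^\ast(g)}=S^{-1}A$ and $\OOO_{X,g}=T^{-1}B$, where $S=\{a\in A^\ast : g(\varphi(a)B)\neq 0\}$ and $T=\{b\in B^\ast : g((b))\neq 0\}$ are multiplicative, and that $\varphi^\#_g$ is the map induced by $\varphi$ (note $\varphi(S)\subseteq T$, since $a\in S$ forces $\varphi(a)\neq 0$ and $g((\varphi(a)))\neq 0$). A maximal ideal $\frakm$ of $S^{-1}A$ is either $(0)$, in which case $\varphi^\#_g(\frakm)$ generates $(0)\subsetneq\OOO_{X,g}$ and there is nothing to prove, or $\frakm=S^{-1}\frakp$ for a nonzero prime $\frakp$ of $A$ with $\frakp\cap S=\varnothing$. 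In the latter case each $\varphi(s)$ ($s\in S$) is a unit of $\OOO_{X,g}$, so the ideal generated by $\varphi^\#_g(\frakm)$ is precisely $(\varphi(\frakp)B)\,\OOO_{X,g}$, and it suffices to show $(\varphi(\frakp)B)\,T^{-1}B\neq T^{-1}B$, i.e.\ that $\varphi(\frakp)B$ contains no element of $T$. If $\varphi(\frakp)=(0)$ this is immediate, so assume $\varphi(\frakp)\neq(0)$.

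The core point is the hypothesis $\frakp\cap S=\varnothing$: for every $a\in\frakp\setminus\{0\}$ we have $g(\varphi(a)B)=0$, so each $\varphi(a)$ lies in some prime of $\ZZZ(g)$; equivalently $\varphi(\frakp)\subseteq\bigcup_{\frakq\in\ZZZ(g)}\frakq$. I would then aim to show that $\varphi(\frakp)B$ itself lies in a single $\frakq\in\ZZZ(g)$, which finishes matters since then $(\varphi(\frakp)B)\,T^{-1}B\subseteq\frakq\,T^{-1}B\subsetneq T^{-1}B$. When $\ZZZ(g)$ is finite this is clean: $\frakp\cap S=\varnothing$ forces $\frakp\subseteq\bigcup_{\frakq\in\ZZZ(g)}\varphi^{-1}(\frakq)$, a finite union of primes of $A$, so by prime avoidance $\frakp=\varphi^{-1}(\frakq)$ for some $\frakq$ and hence $\varphi(\frakp)B\subseteq\frakq$. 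For the general case I would fall back on the descriptions $\OOO_{X,g}=\bigcap_{\frakq\in\ZZZ(g)}B_{\frakq}$ and $\OOO_{Y,\varphi^\ast(g)}=\bigcap_{\frakp'\in\ZZZ(\varphi^\ast(g))}A_{\frakp'}$ from Proposition~\ref{stalk}, reduce to a finitely generated subideal $\fraka=(p_1,\ldots,p_n)\subseteq\frakp$ with a generator of $T$ inside $\varphi(\fraka)B$ (which already gives $g(\varphi(\fraka)B)\neq 0$), factor $\fraka=\frakp^{m}\frakb$, and run a Dedekind–domain approximation argument using that extension of ideals is multiplicative: one produces $a\in\fraka$ with $(a)\fraka^{-1}$ coprime to $\ker\varphi$ and to every $\varphi^{-1}(\frakq)$, $\frakq\in\ZZZ(g)$, whence $g(\varphi(a)B)\neq 0$ and $a\in\frakp\cap S$, a contradiction.

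I expect the main obstacle to be exactly the upgrade from ``$\varphi(\frakp)\subseteq\bigcup_{\frakq}\frakq$'' to ``$\varphi(\frakp)B\subseteq\bigcup_{\frakq}\frakq$'': since $\varphi(\frakp)$ is only an additive subgroup and $\ZZZ(g)$ may be infinite, bare prime avoidance does not apply, and one must use the Dedekind structure of $B$ (two–generator ideals, and that $g$ is determined multiplicatively by its values on $\Max(B)$). The subcase $\varphi(\frakp)B=B$ has to be excluded by hand — there one again reduces to a two–generated $\fraka\subseteq\frakp$ with $\varphi(\fraka)B=B$ and produces $a\in\fraka\subseteq\frakp$ with $\varphi(a)$ a unit of $B$, so $g(\varphi(a)B)=1\neq 0$, contradicting $\frakp\cap S=\varnothing$. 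The remaining bookkeeping — that the two localizations really are the stalks and that $\varphi^\#_g$ is the obvious induced map — is routine from the definitions and Proposition~\ref{stalk}.
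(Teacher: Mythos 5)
Your set-up is fine: identifying the two stalks as $S^{-1}A$ and $T^{-1}B$, disposing of $\frakm=(0)$, reducing to ``$\varphi(\frakp)B$ meets no element of $T$'', and the finite case via prime avoidance (getting $\frakp=\varphi^{-1}(\frakq)$ for some $\frakq\in\ZZZ(g)$, hence $\varphi(\frakp)B\subseteq\frakq$ with $\frakq\cap T=\varnothing$) are all correct, and in spirit this is the paper's route: the paper deduces $\frakp\in\ZZZ(\varphi^\ast g)$ from $\frakp\cap S=\varnothing$ and then invokes Proposition \ref{zeros} to write $\frakp=\varphi^{-1}(\frakq)$, $\frakq\in\ZZZ(g)$. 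The genuine gap is your general case. The ``approximation argument'' you appeal to --- producing $a\in\fraka$ with $(a)\fraka^{-1}$ coprime to \emph{every} $\varphi^{-1}(\frakq)$, $\frakq\in\ZZZ(g)$ --- is not a theorem about Dedekind domains when the set of primes to be avoided is infinite. CRT/approximation and the two-generator property only let you control the valuations of $a$ at finitely many prescribed primes; the cofactor $(a)\fraka^{-1}$ then picks up uncontrolled primes outside that finite set. Indeed the statement you need is false in general: if $[\fraka]$ has infinite order in $\Cl(A)$ and the avoidance set is, say, all maximal ideals outside the support of $\fraka$, then $(a)\fraka^{-1}$ supported off that set would make a product of primes of $\mathrm{supp}(\fraka)$ principal, which the non-torsion class forbids (already for $\fraka=\frakp$ with $[\frakp]$ non-torsion there is no such $a$). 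So the second half of your argument does not close; you have correctly located the crux (upgrading the elementwise covering of $\varphi(\frakp)$ to a statement about the ideal $\varphi(\frakp)B$) but the proposed fix does not overcome the class-group obstruction.

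For comparison: the paper's proof is shorter because it asserts directly that $\frakp\cap S=\varnothing$ forces $\frakp\in\ZZZ(\varphi^\ast g)$, which is immediate when $\ZZZ(\varphi^\ast g)$ is finite (prime avoidance, as in your finite case) or when $\Cl(A)$ is torsion (if $\frakp^h=(a)$ then $a\in\frakp$ gives $0=\varphi^\ast g((a))=\varphi^\ast g(\frakp)^h$), i.e.\ exactly the regimes where your argument also succeeds; note also that the paper's use of Proposition \ref{zeros} tacitly uses that $\varphi$ is injective and quasi-integral, the setting in which the lemma is applied. If you want a complete argument beyond the finite/torsion cases, you must either prove the covering statement for the extended ideal by a different mechanism (your reduction to $\fraka=(a_1,\dots,a_n)\subseteq\frakp$ with $\varphi^\ast g(\fraka)\neq0$ is a good start, but you then need to rule out such an $\fraka$ using only that every principal ideal inside $\frakp$ meets $\ZZZ(\varphi^\ast g)$), or add a hypothesis (e.g.\ $\Cl(A)$ torsion, or $\ZZZ(g)$ finite) under which the avoidance step is legitimate.
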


\begin{proof}
Fix $g \in X$ and let $f = \varphi^\ast(g)$. By the proof of Proposition 
\ref{stalk}, $\OOO_{X, g} = S_g^{-1}B$ where
$$
S_g = \{b \in B^\ast : g((b)) \neq 0\} = B \sm \bigcup_{\frakq \in \ZZZ(g)}\frakq.
$$
Similarly, $\OOO_{Y, f} = T_f^{-1}A$ where
$$
T_f = \{a \in A^\ast : f((a)) \neq 0\} = A \sm \bigcup_{\frakp \in \ZZZ(f)}\frakp.
$$
There is a commutative diagram
$$
\xymatrix{
A \ar[rr]^>>>>>>>>>>>>>{\varphi} \ar[d]  & & B \ar[d] \\
T_f^{-1}A \ar[rr]^>>>>>>>>>>{\varphi^{\#}_g} && S_g^{-1}B
}
$$
where $\varphi^{\#}_g$ is the obvious map induced by $\varphi$. Suppose $\frakm \subset T_f^{-1}A$ is a maximal ideal. Assuming $T_f^{-1}A \neq \text{Frac}(A)$, $\frakm = T_f^{-1}\frakp$ for some nonzero prime ideal $\frakp \subset A$ satisfying $\frakp \cap T_f = \varnothing$. From the definition of $T_f$, it follows that 
$\frakp \in \ZZZ(f)$. Then by Proposition \ref{zeros}, 
$$
\ZZZ(f) = \ZZZ(\varphi^{\ast}(g)) = \{\varphi^{-1}(\frakq) : \frakq \in \ZZZ(g)\},
$$
which implies $\varphi(\frakp)B \subset \frakq$ for some $\frakq \in \ZZZ(g)$. Note that $\ZZZ(g) \neq \varnothing$ since $T_f^{-1}A \neq \text{Frac}(A)$. Hence
$\varphi(\frakp)B \cap S_g = \varnothing$, so the ideal in $S_g^{-1}B$ generated by
$\varphi^{\#}_g(\frakm)$ is proper. This is also clearly true if $T_f^{-1}A = \text{Frac}(A)$.
\end{proof}

Our next goal is to determine the dimension of the space $\MMM(A)$. To do this, we use the following Nullstellensatz-type result.

\begin{lemma}\label{lemma}\label{null}
Let $A$ be a Dedekind domain and $\frakp_1, \ldots, \frakp_m \in \Max(A)$, and let
$\frakp_i^\ast = \frakp_i\sm\{0\}$. If $P_m = \bigcup_{i=1}^m\frakp_i^\ast$ then $\III(\VVV(P_m)) = P_m$.
\end{lemma}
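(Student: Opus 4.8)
The plan is to prove the two inclusions $P_m \subseteq \III(\VVV(P_m))$ and $\III(\VVV(P_m)) \subseteq P_m$ separately. The first is the formal half of the Galois connection between $\VVV$ and $\III$ and needs no hypotheses on $A$ beyond what is assumed: if $a \in P_m$, then every $f \in \VVV(P_m)$ satisfies $f((a)) = 0$ by the very definition of $\VVV(P_m)$, so $a \in \III(\VVV(P_m))$. This is the same observation, applied to $S = P_m$, that underlies the identity $\VVV(\III(\VVV(S))) = \VVV(S)$ recorded before the lemma.

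All the content is in the reverse inclusion, which I would establish contrapositively. Let $a \in A^\ast$ with $a \notin P_m$; equivalently, $a \notin \frakp_i$ for every $i$. I must exhibit some $f \in \VVV(P_m)$ with $f((a)) \neq 0$. The natural choice is the totally multiplicative function whose set of prime-ideal zeros is exactly $\{\frakp_1, \ldots, \frakp_m\}$: define $f \in \MMM(A)$ by $f(\frakp_i) = 0$ for $i = 1, \ldots, m$ and $f(\frakq) = 1$ for all other $\frakq \in \Max(A)$. This is a legitimate element of $\MMM(A)$ because a totally multiplicative arithmetic function on $I_A$ may be prescribed arbitrarily on the primes.

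It then remains to check two things, both routine unique-factorization bookkeeping. First, $f \in \VVV(P_m)$: for $b \in \frakp_i^\ast$ the ideal $(b)$ is divisible by $\frakp_i$ to a positive power, so in $f((b)) = \prod_{\frakq \mid (b)} f(\frakq)^{\ord_\frakq((b))}$ the factor at $\frakq = \frakp_i$ is $0$, hence $f((b)) = 0$. Second, $f((a)) \neq 0$: since $a$ lies in no $\frakp_i$, the ideal $(a)$ is divisible by none of the $\frakp_i$, so every prime occurring in its factorization carries $f$-value $1$, giving $f((a)) = 1$; the case $a \in A^\times$ is immediate since then $(a)$ is the identity of $I_A$ and $f(1) = 1$. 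Thus $a \notin \III(\VVV(P_m))$, and $\III(\VVV(P_m)) \subseteq P_m$.

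I do not anticipate a genuine obstacle here; the only point needing a moment's care is justifying that the prescribed $f$ is well defined and that the product formula $f((b)) = \prod_{\frakq \mid (b)} f(\frakq)^{\ord_\frakq((b))}$ holds — both of which are consequences of the fact that totally multiplicative functions on $I_A$ are exactly those freely determined by their values at the primes.
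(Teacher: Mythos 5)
Your proposal is correct and follows essentially the same route as the paper: both construct the totally multiplicative $f$ vanishing exactly at $\frakp_1,\ldots,\frakp_m$, observe that $f \in \VVV(P_m)$ while $f((a)) \neq 0$ precisely when $a \notin P_m$ (the paper phrases this as $\III(\{f\}) = P_m$ together with $\III(\VVV(P_m)) \subset \III(\{f\})$), and conclude the nontrivial inclusion. The paper additionally records $\overline{\{f\}} = \VVV(P_m)$ for later use, but that is not needed for the lemma itself.
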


\begin{proof}
Obviously $P_m \subset \III(\VVV(P_m))$, so we need to prove the opposite inclusion. Define $f \in \MMM(A)$ by $f(\frakp_i) = 0$ for all $1 \leq i \leq m$ and $f(\frakq) = 1$ for all $\frakq \notin \{\frakp_1, \ldots, \frakp_m\}$.
Then 
$$
\III(\{f\}) = \{a \in A^\ast : \text{$\frakp_i \mid (a)$ for some $1 \leq i \leq m$}\} = P_m.
$$
Suppose $\VVV(S) \subset \MMM(A)$ is a closed set containing $f$. Then for all $a \in S$, 
there is a $\frakp_i \mid (a)$, which implies $\VVV(P_m) \subset \VVV(S)$. Since $f \in \VVV(P_m)$, this shows $\overline{\{f\}} = 
\VVV(P_m)$ and hence
\begin{equation*}
\III(\VVV(P_m)) = \III(\overline{\{f\}}) \subset   \III(\{f\}) = P_m. \qedhere
\end{equation*}
\end{proof}

As $\VVV(P_m)$ is the closure of a one-point set, it is an irreducible subset of $\MMM(A)$. Also, since $P_{m-1} \neq P_m$, we have $\VVV(P_{m-1}) \neq \VVV(P_m)$.

\begin{proposition}\label{dimension}
Let $A$ be a Dedekind domain and $X = \MMM(A)$. Then $\dim X = \#\Max(A)$, where the dimension is $\infty$ if $\Max(A)$ is infinite.
\end{proposition}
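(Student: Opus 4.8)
Throughout, $\dim X$ for $X = \MMM(A)$ means the Krull dimension of the topological space: the supremum of lengths $k$ of chains $Y_0 \subsetneq Y_1 \subsetneq \cdots \subsetneq Y_k$ of nonempty irreducible closed subsets. The plan is to prove the two inequalities $\dim X \ge \#\Max(A)$ and $\dim X \le \#\Max(A)$ separately; the first is essentially a repackaging of Lemma \ref{null}, while the second needs a classification of the irreducible closed subsets of $X$.

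For the lower bound I would fix any $m$ distinct maximal ideals $\frakp_1, \dots, \frakp_m$ with $m \le \#\Max(A)$, put $P_j = \bigcup_{i=1}^{j} \frakp_i^\ast$ for $0 \le j \le m$ (so $P_0 = \varnothing$ and $\VVV(P_0) = X$), and consider the chain $X = \VVV(P_0) \supseteq \VVV(P_1) \supseteq \cdots \supseteq \VVV(P_m)$. By Lemma \ref{null} and the remark after it, each $\VVV(P_j)$ with $j \ge 1$ is the closure of a one-point set and hence irreducible, with $\III(\VVV(P_j)) = P_j$, while $X = \VVV(P_0)$ is irreducible and $\III(X) = \varnothing = P_0$ because the constant function $1$ lies in $X$. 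Since $P_0 \subsetneq P_1 \subsetneq \cdots \subsetneq P_m$ (by prime avoidance $\frakp_{j+1}^\ast \not\subseteq P_j$, the maximal ideals being pairwise incomparable) and $\VVV \III$ is the identity on closed sets, all these inclusions are strict, giving a chain of length $m$. Hence $\dim X \ge \#\Max(A)$; this already settles the case where $\Max(A)$ is infinite.

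For the upper bound it remains to handle $n := \#\Max(A) < \infty$, and here I would first observe that a Dedekind domain with finitely many maximal ideals is semi-local, hence a PID; in particular each $\frakq \in \Max(A)$ is principal, say $\frakq = (\pi)$, so that $\{h \in X : h(\frakq) = 0\} = \VVV(\{\pi\})$ is closed. For an irreducible closed set $Y$ I would set $\Sigma(Y) = \bigcap_{f \in Y}\ZZZ(f) \subseteq \Max(A)$ and aim to show $Y = Y_{\Sigma(Y)}$, where $Y_\Sigma := \{f \in X : \ZZZ(f) \supseteq \Sigma\}$. One inclusion is clear; for $Y_{\Sigma(Y)} \subseteq Y = \VVV(\III(Y))$ it suffices to check that every $a \in \III(Y)$ has a prime divisor lying in $\Sigma(Y)$. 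Writing $(a) = \frakq_1^{v_1} \cdots \frakq_r^{v_r}$ with $\frakq_i = (\pi_i)$, the hypothesis $f((a)) = 0$ for all $f \in Y$ forces $Y \subseteq \bigcup_{i=1}^{r}\VVV(\{\pi_i\})$, so by irreducibility $Y \subseteq \VVV(\{\pi_i\})$ for some $i$, i.e. $\frakq_i \in \Sigma(Y)$; then any $g$ with $\ZZZ(g) \supseteq \Sigma(Y)$ has $g(\frakq_i) = 0$, hence $g((a)) = 0$, so $g \in \VVV(\III(Y)) = Y$. With this classification in hand, a chain $Y_0 \subsetneq \cdots \subsetneq Y_k$ of irreducible closed subsets yields subsets $\Sigma(Y_0) \supseteq \cdots \supseteq \Sigma(Y_k)$ of the $n$-element set $\Max(A)$ that are strictly decreasing — equality $\Sigma(Y_i) = \Sigma(Y_{i+1})$ would give $Y_i = Y_{\Sigma(Y_i)} = Y_{\Sigma(Y_{i+1})} = Y_{i+1}$ — so $k \le n$ and $\dim X \le n$.

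The hard part will be the classification $Y = Y_{\Sigma(Y)}$ of irreducible closed sets used in the upper bound. The argument above leans on $\{h : h(\frakq) = 0\}$ being closed, and for a general Dedekind domain this can fail when the ideal class of $\frakq$ has infinite order in $\Cl(A)$ — so one really does need the preliminary reduction that $\Max(A)$ finite forces $A$ to be a PID, where the obstruction vanishes. The remaining steps (irreducibility of the $\VVV(P_j)$, strictness of the inclusions via $\VVV\III = \mathrm{id}$, and the semi-local implies PID fact) should be routine given what has already been proved.
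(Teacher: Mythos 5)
Your proposal is correct, and while the lower bound (and the infinite case) is exactly the paper's argument — the chain $\VVV(P_1) \supsetneq \VVV(P_2) \supsetneq \cdots$ built from Lemma \ref{null} — your upper bound takes a genuinely different route. The paper also reduces to the semi-local (hence PID) case, but then argues only about the one distinguished chain $X \supsetneq Z_1 \supsetneq \cdots \supsetneq Z_n$: using the multiplicative ``primeness'' of $\III(\VVV(S))$ for irreducible $\VVV(S)$ and an induction showing $p_m^k \mid a$ for all $k$, it proves that no irreducible closed set sits strictly between consecutive terms, and concludes the chain is maximal. You instead classify \emph{all} nonempty irreducible closed subsets: $Y = Y_{\Sigma(Y)}$ with $\Sigma(Y) = \bigcap_{f \in Y}\ZZZ(f)$, the key step being that irreducibility forces $Y \subseteq \VVV(\{\pi_i\})$ for some prime divisor $\frakq_i = (\pi_i)$ of each $a \in \III(Y)$ (here the PID reduction is exactly what makes $\{h : h(\frakq) = 0\}$ closed, as you note). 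This buys something the paper's squeeze argument does not literally give: a bound on the length of an \emph{arbitrary} chain of irreducible closed sets, via the strictly decreasing subsets $\Sigma(Y_0) \supsetneq \cdots \supsetneq \Sigma(Y_k)$ of the $n$-element set $\Max(A)$, which is what the definition of dimension actually requires; the paper's approach, in exchange, gives a concrete local picture of what can sit adjacent to each $Z_m$. Your bookkeeping is sound throughout (strictness of $P_j \subsetneq P_{j+1}$ by prime avoidance, $\III(\VVV(P_j)) = P_j$ from Lemma \ref{null}, units excluded from $\III(Y)$ for nonempty $Y$), so no gaps.
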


\begin{proof}
First suppose $\Max(A)$ is infinite and let $\{\frakp_i : i \geq 1\} \subset \Max(A)$ be a countable subset. For $m \geq 1$, define $P_m = \bigcup_{i=1}^m\frakp_i^\ast$ and $Z_m = \VVV(P_m)$, which is a closed, irreducible subset of $X$. Then there is an infinite chain
$$
Z_1 \supsetneq Z_2 \supsetneq Z_3 \supsetneq \cdots
$$
which shows $\dim X = \infty$.

Now suppose $\Max(A) = \{\frakp_1, \ldots, \frakp_n\}$. As above there is a chain of closed, irreducible subsets
\begin{equation}\label{chain}
X \supsetneq Z_1 \supsetneq Z_2 \supsetneq \cdots \supsetneq Z_n.
\end{equation}
Note that $Z_1 \neq X$ since the function that is identically 1 is not in $Z_1$. Suppose $\VVV(S) \subset X$ is a closed irreducible subset and $Z_m \subset \VVV(S) \subset Z_{m-1}$ for some $m$. Then by Lemma \ref{lemma}, $P_m \supset \III(\VVV(S)) \supset P_{m-1}$.
Since $A$ is a semi-local Dedekind domain, it is a PID, and thus $\frakp_i^\ast = A^\ast p_i$
for some prime element $p_i \in A$. Suppose $\III(\VVV(S)) \neq P_{m-1}$. Then there exists $a \in \III(\VVV(S))$ such that $a \notin P_{m-1}$, and hence $a \in \frakp_m^\ast$. Write $a = p_mx_1$ for some $x_1 \in A^\ast$. Then $a = p_mx_1 \in \III(\VVV(S))$, and $\VVV(S)$ is irreducible, so $p_m \in \III(\VVV(S))$ or $x_1 \in \III(\VVV(S))$. If $p_m \in \III(\VVV(S))$ then $\frakp_m^\ast = 
A^\ast p_m \subset \III(\VVV(S))$, which means $\III(\VVV(S)) = P_m$.
If $p_m \notin \III(\VVV(S))$ then $x_1 \in \III(\VVV(S))$, and since $a \notin P_{m-1}$, we have $x_1 \notin P_{m-1}$. It follows that
$x_1 \in \frakp_m^\ast$ and $x_1 = p_mx_2$ for some $x_2 \in A^\ast$, so $a = p_m^2x_2$. Again, since $p_m \notin \III(\VVV(S))$ and $\VVV(S)$ is irreducible, $a = p_m^3x_3$ for some $x_3 \in A^\ast$. By induction, we find $p_m^k \mid a$ for all $k \geq 1$, which means $a = 0$, a contradiction. This shows we must have $\VVV(S) = Z_{m-1}$ or $\VVV(S) = Z_m$.

Next suppose $X \supset \VVV(S) \supset Z_1$ with $\VVV(S)$ closed and irreducible, so 
$\III(\VVV(S)) \subset \frakp_1^\ast$. Let $a \in \III(\VVV(S))$, so $a = p_1x_1$ for some
$x_1 \in A^\ast$. Since $\VVV(S)$ is irreducible, $p_1 \in \III(\VVV(S))$, in which case $\III(\VVV(S)) = \frakp_1^\ast$, or $x_1 \in \III(\VVV(S))$. If $p_1 \notin \III(\VVV(S))$, using the same argument as in the last paragraph, we conclude $a = 0$, a contradiction, so if $\III(\VVV(S)) \neq \frakp_1^\ast$ then
$\III(\VVV(S)) = \varnothing$. This shows $\VVV(S) = Z_1$ or $\VVV(S) = X$. Finally, if
$Z_n \supsetneq \VVV(S)$ with $\VVV(S)$ closed and irreducible, then $P_n \subsetneq \III(\VVV(S))$ and $\III(\VVV(S))$ must contain a unit since 
$A^\times = A\sm\bigcup_{i=1}^n\frakp_i$. Then $\III(\VVV(S)) = A^\ast$ and hence $\VVV(S) = \varnothing$. Therefore the chain (\ref{chain}) is maximal and $\dim X = n$.
\end{proof}

\begin{proposition}\label{noetherian}
The topological space $X = \MMM(A)$ is Noetherian if and only if $\Max(A)$ is finite.
\end{proposition}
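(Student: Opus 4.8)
The plan is to prove the two implications separately, with the infinite case being immediate from the previous results and the finite case reducing to the observation that $\MMM(A)$ then has only finitely many closed subsets.

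For the forward implication, suppose $\Max(A)$ is infinite. The proof of Proposition \ref{dimension} already produces what is needed: choosing a countably infinite subset $\{\frakp_i : i \geq 1\} \subseteq \Max(A)$ and setting $P_m = \bigcup_{i=1}^m \frakp_i^\ast$, Lemma \ref{null} gives $\III(\VVV(P_m)) = P_m$, and since $P_{m-1} \subsetneq P_m$ we get a strictly descending chain of closed subsets $\VVV(P_1) \supsetneq \VVV(P_2) \supsetneq \cdots$. Thus $X = \MMM(A)$ fails the descending chain condition on closed sets and is not Noetherian; I would simply cite this construction.

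For the converse, suppose $\Max(A) = \{\frakp_1, \ldots, \frakp_n\}$ is finite, and show that $X$ then has only finitely many closed subsets, which forces the descending chain condition. The key observation is that every $f \in X$ is totally multiplicative, so if $a \in A^\ast$ has ideal factorization $(a) = \frakp_1^{e_1}\cdots\frakp_n^{e_n}$, then $f((a)) = \prod_{i=1}^n f(\frakp_i)^{e_i}$, which vanishes exactly when $f(\frakp_i) = 0$ for some $i$ in the support $T(a) := \{i : e_i > 0\}$. Hence $\VVV(\{a\})$ depends only on $T(a) \subseteq \{1, \ldots, n\}$, so the family $\{\VVV(\{a\}) : a \in A^\ast\}$ has at most $2^n$ members. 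Since every closed subset of $X$ is an intersection $\VVV(S) = \bigcap_{a \in S}\VVV(\{a\})$ of members of this finite family (using Proposition \ref{properties}(b), with $\VVV(\EM) = X$), there are only finitely many closed subsets, so $X$ is Noetherian.

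The argument is largely routine; the one piece of genuine content is the reduction in the finite case, namely recognizing that total multiplicativity collapses the a priori infinite family of basic closed sets $\VVV(\{a\})$ to finitely many, indexed by the prime support of $a$. I do not anticipate a real obstacle here, only bookkeeping: one must track the edge cases $S = \EM$ (giving $X$) and $S$ containing a unit (giving $\EM$), and should note that what is used is unique factorization of ideals in the Dedekind domain $A$, not that $A$ is a PID.
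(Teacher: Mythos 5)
Your proof is correct, and it follows the same overall strategy as the paper: the infinite case is handled by the strictly descending chain $\VVV(P_1) \supsetneq \VVV(P_2) \supsetneq \cdots$ already built in Proposition \ref{dimension}, and the finite case is settled by showing $X$ has only finitely many closed sets. The only difference is in how finiteness of the closed sets is established. You work on the side of basic closed sets: total multiplicativity (together with $K$ being a domain) shows that $\VVV(\{a\})$ depends only on the prime support of $(a)$, so there are at most $2^n$ sets of the form $\VVV(\{a\})$, and every closed set $\VVV(S) = \bigcap_{a \in S}\VVV(\{a\})$ is an intersection of members of this finite family, hence there are finitely many closed sets in all. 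The paper works on the side of points: it observes that membership of $f$ in $\VVV(S)$ depends only on $\ZZZ(f)$, picks one representative function $f_{i_1\ldots i_k}$ for each subset of $\Max(A)$, and injects the family of nonempty closed sets into the power set of this finite collection of representatives. These are dual bookkeeping devices for the same underlying fact that a closed condition only sees prime-ideal zeros; your version has the small advantage of making explicit that the basis $\{\VVV(\{a\})\}$ itself is finite, while the paper's injection more directly exhibits the bound on the number of closed sets. Your handling of the edge cases ($S = \varnothing$, units in $S$) and your remark that only unique factorization of ideals is needed are both accurate.
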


\begin{proof}
If $\Max(A)$ is infinite, the first paragraph of the previous proof shows $X$ is not Noetherian.
Now suppose $\Max(A) = \{\frakp_1, \ldots, \frakp_n\}$. Note that for $S \subset A^\ast$ and 
$f, g \in X$, if $\ZZZ(f) = \ZZZ(g)$ then $f \in \VVV(S)$ if and only if $g \in \VVV(S)$. For any
integers $1 \leq i_1 < \cdots < i_k \leq n$, define $f_{i_1\ldots i_k} \in X$ by
$$
f_{i_1\ldots i_k}(\frakp_i) = 
\left\{\begin{array}{ll}
0 & \text{if $i \in \{i_1, \ldots, i_k\}$} \\
1 & \text{otherwise}.
\end{array} \right.
$$
Writing $\mathscr{P}(\cdot)$ for power set, define
$$
F : \{\VVV(S) \neq \varnothing : S \subset A^\ast\} \to \mathscr{P}(\{f_{i_1\ldots i_k} : 
1 \leq i_1 < \cdots < i_k \leq n\})
$$
by $F(\VVV(S)) = \{f_{i_1\ldots i_k} \in \VVV(S) : 1 \leq i_1 < \cdots < i_k \leq n\}$. To show $F$ is injective, suppose
$$
\{f_{i_1\ldots i_k} \in \VVV(S) : 1 \leq i_1 < \cdots < i_k \leq n\} = \{f_{i_1\ldots i_\ell} \in \VVV(S') : 1 \leq i_1 < \cdots < i_\ell \leq n\}
$$
and let $f \in \VVV(S)$. Then $\ZZZ(f) = \{\frakp_{i_1}, \ldots, \frakp_{i_k}\}$ for some
$i_1 < \cdots < i_k$. As $\ZZZ(f) = \ZZZ(f_{i_1\ldots i_k})$, we have $f \in \VVV(S')$. 
The same argument gives the reverse inclusion, so $\VVV(S) = \VVV(S')$. Since
$F$ is injective, $X$ has only finitely many closed sets and therefore is Noetherian.
\end{proof}

\begin{remark}
Recall that a topological space $X$ is called $T_0$ if, among other equivalent definitions, every irreducible closed subset of $X$ has at most one generic point. Let $X = \MMM(A)$, $\frakp \in \Max(A)$, and set $\frakp^\ast = \frakp\sm\{0\}$. Then by Lemma \ref{null} and its proof, $Y = \VVV(\frakp^\ast)$ is an irreducible, closed subset of $X$ and $Y = \overline{\{f\}}$ for any $f \in X$ with $\ZZZ(f) = \{\frakp\}$. It follows that if $\#\Max(A) > 1$ and $\#K > 2$ then $Y$ has more than one generic point and therefore $X$ is not $T_0$. 
\end{remark}

\begin{proposition}\label{points}
Any $f \in \MMM(A)$ not equal to the identity function $e \in \MMM(A)$ is not a closed point.
The point $e$ is closed if and only if the ideal class group $\Cl(A)$ is torsion.
\end{proposition}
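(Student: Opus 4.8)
The plan is to make the closure of a one-point set completely explicit in terms of the zero set $\ZZZ(f) = \{\frakp \in \Max(A) : f(\frakp) = 0\}$. Recall from the discussion above that $\overline{\{f\}} = \VVV(\III(\{f\}))$ and that $\III(\{f\}) = \{a \in A^\ast : f((a)) = 0\}$. By total multiplicativity $f((a)) = \prod_{\frakp} f(\frakp)^{\ord_\frakp(a)}$, so $f((a)) = 0$ if and only if $\frakp \mid (a)$ for some $\frakp \in \ZZZ(f)$; thus $\III(\{f\})$ depends only on $\ZZZ(f)$, and, crucially, if $g \in \MMM(A)$ satisfies $\ZZZ(g) \supseteq \ZZZ(f)$ then $g \in \overline{\{f\}}$, since any $a$ with $f((a)) = 0$ has a prime factor in $\ZZZ(f) \subseteq \ZZZ(g)$, forcing $g((a)) = 0$. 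I would also record at the outset that, since a totally multiplicative function is determined by its values on primes, $f = e$ if and only if $\ZZZ(f) = \Max(A)$.

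For the first assertion, suppose $f \neq e$. Then $\ZZZ(f) \subsetneq \Max(A)$, so there is a prime $\frakq$ with $f(\frakq) \neq 0$. Define $g \in \MMM(A)$ by $g(\frakq) = 0$ and $g(\frakp) = f(\frakp)$ for $\frakp \neq \frakq$. Then $g \neq f$ (they differ at $\frakq$), while $\ZZZ(g) = \ZZZ(f) \cup \{\frakq\} \supseteq \ZZZ(f)$, so $g \in \overline{\{f\}}$ by the previous paragraph. Hence $\{f\}$ is not closed.

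For the second assertion I would show that $e$ is closed if and only if every $\frakp \in \Max(A)$ has finite order in $\Cl(A)$, and then invoke the standard fact that $\Cl(A)$ is an abelian group generated by the classes $[\frakp]$ to conclude that this happens exactly when $\Cl(A)$ is torsion. Here $\III(\{e\})$ is the set of nonzero non-units of $A$, so $g \in \overline{\{e\}}$ iff every nonzero proper principal ideal $(a)$ has a prime factor in $\ZZZ(g)$ --- equivalently, iff the only principal ideal all of whose prime factors avoid $\ZZZ(g)$ is $A$ itself. If $\Cl(A)$ is torsion and $g \neq e$, pick $\frakq \notin \ZZZ(g)$; some power $\frakq^n$ with $n \geq 1$ is principal, say $\frakq^n = (a)$, and then $a$ is a nonzero non-unit with $g((a)) = g(\frakq)^n \neq 0$, so $g \notin \overline{\{e\}}$; thus $\overline{\{e\}} = \{e\}$. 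Conversely, if $\Cl(A)$ is not torsion, some $[\frakq]$ has infinite order; the function $g$ with $g(\frakq) = 1$ and $g(\frakp) = 0$ for $\frakp \neq \frakq$ is not $e$, and a nonzero non-unit $a$ with $g((a)) \neq 0$ would force $(a) = \frakq^n$ with $n \geq 1$, contradicting infinite order --- so $g \in \overline{\{e\}}$ and $e$ is not closed.

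I do not anticipate a real obstacle: the whole argument is the explicit description of $\overline{\{f\}}$ through $\ZZZ(f)$ together with the reduction ``$e$ closed $\iff$ every $[\frakp]$ torsion $\iff \Cl(A)$ torsion.'' The points that want a little care are the use of the fact that totally multiplicative functions are determined on primes (so that $f = e \iff \ZZZ(f) = \Max(A)$, and so that the modified functions $g$ constructed above genuinely lie in $\MMM(A)$), and the passage from ``all generators $[\frakp]$ torsion'' to ``$\Cl(A)$ torsion,'' which relies on $\Cl(A)$ being abelian and generated by the classes of maximal ideals.
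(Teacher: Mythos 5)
Your proof is correct and, for the class-group criterion, follows essentially the same route as the paper: torsion classes give a principal power $\frakq^n=(a)$ detecting any $g\neq e$ outside $\overline{\{e\}}$, and a prime $\frakq$ whose class has infinite order yields the function with $g(\frakq)=1$, $g(\frakp)=0$ otherwise, lying in $\overline{\{e\}}\sm\{e\}$. Your first part differs only cosmetically: you produce a point of $\overline{\{f\}}\sm\{f\}$ by adding one more prime zero to $f$, whereas the paper simply observes that $e$ lies in every nonempty closed set and hence in $\overline{\{f\}}$; both are valid, and your reduction ``$f=e \iff \ZZZ(f)=\Max(A)$'' and the passage from torsion generators $[\frakp]$ to torsion $\Cl(A)$ are sound.
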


\begin{proof}
Since $e(\fraka) = 0$ for all $\fraka \neq (1)$, the point $e$ lies in every nonempty closed subset of $\MMM(A)$, which proves the first statement. For the second statement, first suppose $\Cl(A)$ is a torsion group and let $f \in \VVV(\III(\{e\}))$. Then $f((a)) = 0$ for all
$a \in A^\ast\sm A^\times$. Given any $\fraka \in I_A$, the class $[\fraka] \in \Cl(A)$ has finite order, which means $\fraka^n = (a)$ for some $n \geq 1$ and $a \in A^\ast$. If $a \in A^\times$ then $\fraka = (1)$ and $f(\fraka) = 1$. If $a \in A^\ast\sm A^\times$ then
$f(\fraka) = 0$, which shows $f = e$ and hence $\overline{\{e\}} = \{e\}$. 

Conversely, suppose $e$ is closed and suppose $\Cl(A)$ contains an element with infinite order. Represent this class by an integral ideal $\fraka$. Writing $\fraka = \frakp_1^{n_1}\cdots\frakp_r^{n_r}$, at least one class $[\frakp_k] \in \Cl(A)$ has infinite order, so no power of $\frakp_k$ is principal. Define $f \in \MMM(A)$ by $f(\frakp_k) = 1$ and $f(\frakq) = 0$ for all primes $\frakq \neq \frakp_k$. Any $a \in A^\ast\sm A^\times$ factors into primes as $(a) = \frakq_1\cdots\frakq_s$, where at least one of the $\frakq_i$ is not equal to $\frakp_k$ since no power of $\frakp_k$ is principal. Then
$f((a)) = 0$, which means $f \in \VVV(\III(\{e\})) = \overline{\{e\}} = \{e\}$, a contradiction.
\end{proof}

\section{A refined space}

Let $\mathbf{A}$ be the category whose objects are Dedekind domains and whose morphisms are injective, quasi-integral ring homomorphisms. Let $\mathbf{B}$ be the category whose objects are ringed spaces $(X, \OOO_X)$ and whose morphisms are morphisms of ringed spaces $(\Phi, \Phi^{\#}) : (X, \OOO_X) \to (Y,  \OOO_Y)$ such that $\Phi^{\#}(Y) : \OOO_{Y}(Y) \to
\OOO_{X}(X)$ is injective and quasi-integral, and for each $x \in X$, the stalk map $\Phi^{\#}_x : \OOO_{Y, \Phi(x)} \to \OOO_{X, x}$ satisfies the following:
for each maximal ideal $\frakm \subset \OOO_{Y, \Phi(x)}$, the ideal $\Phi^{\#}_x(\frakm)\OOO_{X, x}$ is proper. 

Note that $\MMM$ defines a functor from $\mathbf{A}$ to $\mathbf{B}$ by Lemma \ref{stalk3}. For any objects $A$ and $B$ of $\mathbf{A}$, there is a map
$$
\Gamma : \Hom_{\mathbf{B}}(\MMM(B), \MMM(A)) \to \Hom_{\mathbf{A}}(A, B)
$$
given by $\Gamma(\Phi, \Phi^{\#}) = \Phi^{\#}(\MMM(A))$, the homomorphism induced on global sections. By construction, $\Gamma \circ \MMM = \id_{\Hom_{\mathbf{A}}(A, B)}$, so $\MMM$ is injective on morphisms. This shows $\MMM$ is faithful, but the following simple example shows it is not full.

\begin{example} 
Suppose $K$ does not have characteristic $2$.
Let $X = \MMM(\ZZ)$ and define $(\Phi, \Phi^\#) : (X, \OOO_X) \to (X, \OOO_X)$ in the following way. On points, $\Phi(f) = f^2$, where $f^2(\fraka) = f(\fraka)^2$. For $a \in \ZZ$ nonzero,
$$
\Phi^{-1}(\DDD(a)) = \{f \in X : f^2((a)) \neq 0\} = \DDD(a),
$$
so $\Phi$ is continuous. Define 
$$
\Phi^{\#}(\DDD(a)) : \OOO_X(\DDD(a)) = \ZZ[1/a] \to \ZZ[1/a] = \OOO_X(\Phi^{-1}(\DDD(a)))
$$
to be the identity for each $a$. For each $f \in X$, the functions $f$ and $f^2$ have the same zeros, so $\OOO_{X, f^2} = \OOO_{X, f}$ and $\Phi^{\#}_f : \OOO_{X, f^2} \to \OOO_{X, f}$ is the identity. Hence $(\Phi, \Phi^{\#})$ is a nonidentity element of $\Hom_{\mathbf{B}}(X, X)$, but $\Hom_{\mathbf{A}}(\ZZ, \ZZ) = \{\id_\ZZ\}$, so $\MMM : \Hom_{\mathbf{A}}(\ZZ, \ZZ)
\to \Hom_{\mathbf{B}}(X, X)$ is not surjective.
\end{example}

To define a space that gives a fully faithful functor, we modify $\MMM(A)$ in the following way.
Define an equivalence relation $\sim$ on $X = \MMM(A)$ by $f \sim g$ if $\ZZZ(f) = \ZZZ(g)$. 
We will write $[f]$ for the equivalence class of $f \in X$ and $\ZZZ([f])$ for the set $\ZZZ(g)$ for any $g \in [f]$.
Let $\MMM_1(A) = X/\hspace{-1.2mm}\sim$ and let $\pi : X \to \MMM_1(A)$ be the projection $\pi(f) = [f]$. Give $\MMM_1(A)$ the quotient topology determined by the topology on $X$. Let $Y = \MMM_1(A)$ and define a sheaf of rings $\OOO_Y = \pi_\ast\OOO_X$ on $Y$. For $a \in A^\ast$, set $\DDD_1(a) = \pi(\DDD(a))$, and for $S \subset A^\ast$, set
$\VVV_1(S) = \pi(\VVV(S))$. Suppose
$f \in \pi^{-1}(\DDD_1(a))$. Then $\ZZZ(f) = \ZZZ(g)$ for some $g \in \DDD(a)$, which implies
$f \in \DDD(a)$. Therefore $\pi^{-1}(\DDD_1(a)) = \DDD(a)$, which means $\pi$ is an open map, $\{\DDD_1(a) : a \in A^\ast\}$ is a basis for the topology on $Y$, and
$$
\OOO_Y(\DDD_1(a)) = \OOO_X(\DDD(a)) = A_a.
$$
This also implies that for any $f \in X$,
\begin{equation}\label{stalk2}
\OOO_{Y, [f]} = \dlim_{\DDD_1(a) \ni [f]} (\pi_\ast\OOO_X)(\DDD_1(a))
= \dlim_{\DDD(a) \ni f} \OOO_X(\DDD(a)) = \OOO_{X, f}.
\end{equation}
Similarly, if $f \in \pi^{-1}(\VVV_1(S))$ then $\ZZZ(f) = 
\ZZZ(g)$ for some $g \in \VVV(S)$, which implies $f \in \VVV(S)$ and thus $\pi^{-1}(\VVV_1(S)) = \VVV(S)$. This shows $\pi$ is also a closed map and the closed sets of $Y$ are $\VVV_1(S)$ for $S \subset A^\ast$. If $e \in X$ is the identity function, then
$$
[e] = \{f \in X : \ZZZ(f) = \Max(A)\} = \{e\},
$$
so from $\pi$ being a closed map, the results of Proposition \ref{points} continue to hold for $[e] \in Y$.

\begin{corollary}
The embedding $\MMM(A_a) \to \DDD(a)$ in Proposition {\upshape \ref{embedding}} induces an isomorphism of ringed spaces $\MMM_1(A_a) \to \DDD_1(a)$. The embedding $\Spec(A) \to \MMM(A)$ in 
Proposition {\upshape \ref{spec}} induces a homeomorphism $$\Spec(A) \to \{y \in Y : \#\ZZZ(y) \leq 1\},\vspace{2mm}$$ where the prime ideal $(0)$ corresponds to the point $y \in Y$ with $\ZZZ(y) = \varnothing$. This point $y \in Y$ lies in every basis open set $\DDD_1(a)$, so $y$ is a generic point of $Y$, and is the unique such point.
\end{corollary}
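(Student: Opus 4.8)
The plan is to obtain both statements by composing the embeddings of Propositions~\ref{embedding} and~\ref{spec} with the quotient map $\pi : X \to Y$, where $X = \MMM(A)$ and $Y = \MMM_1(A)$. The fact used throughout is a description of the basis open sets downstairs: since $\pi^{-1}(\DDD_1(a)) = \DDD(a)$ and $\pi$ is surjective, a class $[f] \in Y$ lies in $\DDD_1(a)$ if and only if $f((a)) \neq 0$, i.e., since $f$ is totally multiplicative, if and only if no prime ideal dividing $(a)$ belongs to $\ZZZ(f)$ --- a condition depending only on $\ZZZ([f])$. Also each $\DDD_1(a)$ is nonempty, as the constant function $1$ (which is $f_{(0)}$ in the notation of Proposition~\ref{spec}) lies in every $\DDD(a)$; so the $\DDD_1(a)$ form a basis of nonempty open sets.

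For the first statement, write $X_a = \MMM(A_a)$, let $\pi_a : X_a \to \MMM_1(A_a)$ be the quotient map (open, by the same computation that shows $\pi$ is open), and let $F : X_a \to X$ be the embedding of Proposition~\ref{embedding}, with image $Y_0 = \{f \in X : f(\frakp) = 1 \text{ for all } \frakp \mid (a)\} \subseteq \DDD(a)$. Under the bijection $\frakp \leftrightarrow \frakp A_a$ between the maximal ideals of $A$ not dividing $(a)$ and the maximal ideals of $A_a$, one has $\ZZZ(F(g)) \leftrightarrow \ZZZ(g)$ for $g \in X_a$, so $F$ takes $\ZZZ$-equivalence classes to $\ZZZ$-equivalence classes and $\pi \circ F$ factors through a map $\bar F : \MMM_1(A_a) \to \DDD_1(a)$, which the same observation shows to be injective. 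It is surjective because, by the criterion above, a class $[f] \in \DDD_1(a)$ has $\ZZZ(f)$ disjoint from the primes dividing $(a)$, so the $g \in X_a$ that vanishes exactly on the primes $\frakp A_a$ with $\frakp \in \ZZZ(f)$ satisfies $F(g) \sim f$. For the topology, using $F(\DDD_{X_a}(x/1)) = \DDD_X(x) \cap Y_0$ from the proof of Proposition~\ref{embedding}, I would check that $\bar F$ sends the basis open set $\pi_a(\DDD_{X_a}(x/1))$ onto $\pi(\DDD_X(x) \cap Y_0) = \DDD_1(xa)$: the inclusion $\subseteq$ is immediate since $Y_0 \subseteq \DDD_X(a)$, and $\supseteq$ follows by replacing a representative $f$ of a class in $\DDD_1(xa)$ with the function equal to $f$ off the primes dividing $(a)$ and equal to $1$ on them, which lies in $\DDD_X(x) \cap Y_0$ and is $\ZZZ$-equivalent to $f$. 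Thus $\bar F$ is a continuous open bijection, hence a homeomorphism; and the two structure sheaves agree on the basis $\{\DDD_1(xa)\}$ of $\DDD_1(a)$, since $\OOO_{\MMM_1(A_a)}(\pi_a(\DDD_{X_a}(x/1))) = (A_a)_{x/1} = A_{ax} = \OOO_Y(\DDD_1(xa))$ compatibly with the restriction maps. So $\bar F$ is an isomorphism of ringed spaces.

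For the second statement, the embedding $F : \Spec(A) \to X$ of Proposition~\ref{spec} sends a maximal ideal $\frakp$ to $f_\frakp$, with $\ZZZ(f_\frakp) = \{\frakp\}$, and sends $(0)$ to $f_{(0)}$, the constant function $1$, with $\ZZZ(f_{(0)}) = \varnothing$. Composing with $\pi$ gives the map $\frakp \mapsto [f_\frakp]$, which is injective because a class in $Y$ is determined by its zero set, and whose image is exactly $\{y \in Y : \#\ZZZ(y) \leq 1\}$ --- the unique class with empty zero set is $[f_{(0)}]$, and the class with zero set $\{\frakp\}$ is $[f_\frakp]$. It is continuous with $(\pi\circ F)^{-1}(\DDD_1(a)) = F^{-1}(\DDD(a)) = D(a)$, and by the criterion $(\pi\circ F)(D(a)) = \{[f_\frakp] : a \notin \frakp\} = \DDD_1(a)\cap\{y : \#\ZZZ(y)\leq 1\}$, so it is open onto its image; as the $D(a)$ form a basis of $\Spec(A)$, it is the asserted homeomorphism, sending $(0)$ to the point $y$ with $\ZZZ(y) = \varnothing$. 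By the criterion, $y$ lies in every $\DDD_1(a)$ and hence in every nonempty open set, so $\overline{\{y\}} = Y$; and any generic point lies in every $\DDD_1(a)$, which by the criterion forces its zero set to be empty, so it equals $y$.

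The set-level bijections and the continuity statements are routine. I expect the main obstacle to be the topological comparison in the first statement: since $\pi$ is far from injective on the image $Y_0$ of the embedding $\MMM(A_a) \hookrightarrow X$, the openness of $\bar F$ cannot be obtained formally and must be proved through the identity $\pi(\DDD_X(x) \cap Y_0) = \DDD_1(xa)$, after which matching the structure sheaves on corresponding basis open sets is bookkeeping.
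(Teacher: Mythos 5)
Your proposal is correct and takes the route the paper intends: the corollary is stated there without proof, as an immediate consequence of Propositions \ref{embedding} and \ref{spec} combined with the facts established about the quotient map ($\pi^{-1}(\DDD_1(a)) = \DDD(a)$, openness of $\pi$, and that a class is determined by its set of prime zeros), and your argument supplies exactly those details, including the one point genuinely needing verification, the identity $\pi(\DDD_X(x)\cap Y_0)=\DDD_1(xa)$ and the matching of sections $(A_a)_{x/1}=A_{ax}=\OOO_Y(\DDD_1(xa))$ on the basis. No gaps.
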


\begin{proposition}\label{dim2}
If $A$ is a Dedekind domain and $Y = \MMM_1(A)$, then $\dim Y = \#\Max(A)$. If $\Max(A)$ is finite, then $Y$ is finite and contains $2^{\#\Max(A)}$ points.
\end{proposition}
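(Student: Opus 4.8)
The plan is to deduce both statements from facts already established about $X=\MMM(A)$ together with the quotient map $\pi : X\to Y$, which we have seen is surjective, open and closed and satisfies $\pi^{-1}(\VVV_1(S))=\VVV(S)$ for every $S\subset A^\ast$. First I would note that, since every closed subset of $X$ has the form $\VVV(S)$ and every closed subset of $Y$ has the form $\VVV_1(S)=\pi(\VVV(S))$, the relation $\pi^{-1}(\pi(C))=C$ holds for all closed $C\subset X$, so $C\mapsto\pi(C)$ is an inclusion-preserving bijection from the closed subsets of $X$ onto the closed subsets of $Y$, with inverse $D\mapsto\pi^{-1}(D)$. In particular it sends strict inclusions to strict inclusions, hence chains to chains.

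The one step requiring a little care is to show this bijection carries irreducible closed sets to irreducible closed sets in both directions. From left to right this is automatic, as a continuous image of an irreducible space is irreducible. From right to left: if $C\subset X$ is closed with $\pi(C)$ irreducible and $C=C_1\cup C_2$ with the $C_i$ closed, then $\pi(C)=\pi(C_1)\cup\pi(C_2)$ with each $\pi(C_i)$ closed because $\pi$ is a closed map, so $\pi(C)=\pi(C_i)$ for some $i$; applying $\pi^{-1}$ and using $\pi^{-1}(\pi(C))=C$ yields $C=C_i$. Granting this, chains of irreducible closed subsets of $Y$ correspond bijectively and length-preservingly to such chains in $X$, so $\dim Y=\dim X$, and $\dim X=\#\Max(A)$ by Proposition~\ref{dimension}. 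This gives the first assertion, and I expect the bookkeeping of which property of $\pi$ is used where to be the only nontrivial part.

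For the second assertion, assume $\Max(A)$ is finite. By the definition of the equivalence relation $\sim$ on $X$, the map $[f]\mapsto\ZZZ([f])$ is a well-defined bijection from $Y$ onto the set $\{\ZZZ(f):f\in\MMM(A)\}$. To see this set is all of $\mathscr{P}(\Max(A))$, note that for any $T\subset\Max(A)$ the totally multiplicative function $f$ determined by $f(\frakp)=0$ for $\frakp\in T$ and $f(\frakp)=1$ for $\frakp\in\Max(A)\sm T$ has $\ZZZ(f)=T$; here one uses only that a totally multiplicative function on $I_A$ may be prescribed arbitrarily on the set $\Max(A)$ of primes of the monoid $I_A$. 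Hence $Y$ is in bijection with $\mathscr{P}(\Max(A))$, so it is finite with $2^{\#\Max(A)}$ points, which is also consistent with $\dim Y=\#\Max(A)$ from the first part. No real obstacle arises in this part.
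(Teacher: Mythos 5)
Your proof is correct and follows essentially the same route as the paper: both reduce the dimension claim to Proposition~\ref{dimension} by using that $\pi$ is closed with $\pi^{-1}(\VVV_1(S))=\VVV(S)$ to match irreducible closed subsets (hence chains) of $Y$ with those of $X$, and both count points in the finite case via the bijection $[f]\mapsto\ZZZ([f])$ onto subsets of $\Max(A)$. Your extra bookkeeping (the explicit bijection on closed sets and the explicit construction of $f$ with prescribed zero set) only makes the same argument slightly more detailed.
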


\begin{proof}
Let $Z \subset Y$ and $\pi : X \to Y$ be the projection. If $\pi^{-1}(Z) \subset X$ is closed and irreducible, then $\pi(\pi^{-1}(Z)) = Z$ is closed and irreducible since $\pi$ is continuous and closed.
Conversely, suppose $Z = \VVV_1(S)$ is closed and irreducible, so $\pi^{-1}(Z) = \VVV(S)$. Suppose $\VVV(S) = W_1 \cup W_2$ with $W_1, W_2 \subsetneq \VVV(S)$ closed. Then $W_i = \VVV(S_i)$ for some $S_i$ and $Z = \pi(\pi^{-1}(Z)) = \pi(W_1) \cup \pi(W_2)$. If $\pi(W_i) = Z$
then 
$$
W_i = \VVV(S_i) = \pi^{-1}(\pi(\VVV(S_i))) = \pi^{-1}(\VVV_1(S)) = \VVV(S),
$$
a contradiction. Therefore $\pi(W_1), \pi(W_2) \subsetneq Z$ and each set is closed in $Z$, contradicting $Z$ being irreducible. Hence $\pi^{-1}(Z)$ is closed and irreducible. The first statement now follows from
Proposition \ref{dimension}.

Next, suppose $\#\Max(A) = n$. Each $y \in Y$ is determined by its prime zeros, so choosing a $y$ is equivalent to choosing a subset of $\Max(A)$. Therefore $\#Y = 2^n$.
\end{proof}

\begin{corollary}
The topological space $\MMM_1(A)$ is Noetherian if and only if $\Max(A)$ is finite.
\end{corollary}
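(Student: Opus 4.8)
The plan is to deduce this from Proposition~\ref{noetherian}, which already settles exactly when $X = \MMM(A)$ is Noetherian, by showing that $X$ is a Noetherian space if and only if $Y = \MMM_1(A)$ is. The device is the projection $\pi : X \to Y$, which we have already observed is continuous, surjective, and closed, and which satisfies $\pi^{-1}(\VVV_1(S)) = \VVV(S)$ for every $S \subset A^\ast$. Since every closed subset of $X$ has the form $\VVV(S)$ and every closed subset of $Y$ has the form $\VVV_1(S) = \pi(\VVV(S))$, the assignments $\VVV(S) \mapsto \pi(\VVV(S))$ and $C \mapsto \pi^{-1}(C)$ are mutually inverse, inclusion-preserving bijections between the collection of closed subsets of $X$ and that of $Y$. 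Consequently a descending chain of closed subsets of $Y$ pulls back under $\pi^{-1}$ to a descending chain in $X$, strict at exactly the same steps (and conversely), so $Y$ satisfies the descending chain condition on closed sets precisely when $X$ does. Combining this with Proposition~\ref{noetherian} gives the corollary at once.

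As an alternative, one can argue directly and avoid even mentioning $X$. If $\Max(A)$ is finite, Proposition~\ref{dim2} shows $Y$ has only $2^{\#\Max(A)}$ points, hence only finitely many open sets, so it is trivially Noetherian. If $\Max(A)$ is infinite, choose a countable subset $\{\frakp_i : i \geq 1\} \subset \Max(A)$, put $P_m = \bigcup_{i=1}^m \frakp_i^\ast$, and consider the closed sets $\VVV_1(P_m) \subset Y$; since $\pi^{-1}(\VVV_1(P_m)) = \VVV(P_m)$ and the $\VVV(P_m)$ form a strictly decreasing chain (as noted after Lemma~\ref{null}), the $\VVV_1(P_m)$ form an infinite strictly descending chain of closed subsets of $Y$, so $Y$ is not Noetherian.

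I do not anticipate a real obstacle here. The only point requiring a moment's care is that the correspondence between closed sets is \emph{strictly} inclusion-preserving, i.e. that $\pi^{-1}$ cannot turn a proper inclusion $\VVV_1(S') \subsetneq \VVV_1(S)$ into an equality; but this is immediate from $\pi^{-1}(\VVV_1(S)) = \VVV(S)$ together with surjectivity of $\pi$, both already in hand. I would likely present the short direct argument as the main proof and relegate the "lattice of closed sets" remark to a sentence, since it also records that $X$ and $Y$ have literally the same Noetherian behavior.
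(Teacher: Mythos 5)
Your main argument is correct and is essentially the paper's proof: the paper likewise deduces the corollary from Proposition \ref{noetherian} via the bijection $W \mapsto \pi(W)$, $Z \mapsto \pi^{-1}(Z)$ between closed subsets of $\MMM(A)$ and of $\MMM_1(A)$. Your alternative direct argument is a fine variant, but nothing beyond the paper's route is needed.
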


\begin{proof}
Since there is a bijection between the closed subsets $W \subset \MMM(A)$ and the closed subsets $Z \subset \MMM_1(A)$ given by $W \mapsto \pi(W)$ and $Z \mapsto \pi^{-1}(Z)$, the result follows from Proposition \ref{noetherian}.
\end{proof}

Recall that a topological space $X$ is called \textit{spectral} if $X$ is quasi-compact, every irreducible, closed subset of $X$ has a unique generic point, and $X$ has a basis consisting of quasi-compact open subsets which is closed under finite intersections.

\begin{lemma}
If $a, a' \in A^\ast$ then $\DDD_1(a) \cap \DDD_1(a') = \DDD_1(aa')$.
\end{lemma}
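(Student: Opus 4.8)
The plan is to reduce the statement to the already-established identity $\DDD(a) \cap \DDD(a') = \DDD(aa')$ on $X = \MMM(A)$ by pulling back along the quotient map $\pi : X \to Y$, where $Y = \MMM_1(A)$. The key input is the fact, noted in the discussion preceding the lemma, that $\pi^{-1}(\DDD_1(b)) = \DDD(b)$ for every $b \in A^\ast$; this encodes that whether $f \in \DDD(b)$ depends only on $\ZZZ(f)$, equivalently only on the class $[f]$.

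First I would observe that for any $f \in X$,
$$
[f] \in \DDD_1(a) \cap \DDD_1(a') \iff f \in \pi^{-1}(\DDD_1(a)) \cap \pi^{-1}(\DDD_1(a')) = \DDD(a) \cap \DDD(a').
$$
Invoking the earlier proposition $\DDD(a) \cap \DDD(a') = \DDD(aa')$ together with $\pi^{-1}(\DDD_1(aa')) = \DDD(aa')$, this is in turn equivalent to $f \in \pi^{-1}(\DDD_1(aa'))$, i.e. to $[f] \in \DDD_1(aa')$. Since $\pi$ is surjective, every point of $Y$ has the form $[f]$, so $\DDD_1(a) \cap \DDD_1(a') = \DDD_1(aa')$.

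There is essentially no obstacle here; the only points requiring care are that the chain of equivalences is phrased via representatives, so one must use surjectivity of $\pi$ to conclude equality of the two subsets of $Y$, and that one should not confuse this with $\pi^{-1}\circ\pi$ being the identity on subsets of $X$ (it need not be). As an alternative phrasing purely on $Y$: each $\DDD(a)$ equals $\pi^{-1}(\DDD_1(a))$ and is therefore saturated for $\sim$, so $\DDD(a) \cap \DDD(a')$ is saturated and $\pi$ carries it to $\pi(\DDD(a)) \cap \pi(\DDD(a')) = \DDD_1(a) \cap \DDD_1(a')$; on the other hand $\pi(\DDD(a) \cap \DDD(a')) = \pi(\DDD(aa')) = \DDD_1(aa')$, giving the claim.
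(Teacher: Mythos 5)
Your proof is correct and follows essentially the same route as the paper: both reduce to the identity $\DDD(a)\cap\DDD(a')=\DDD(aa')$ on $\MMM(A)$ together with the fact that membership in $\DDD(b)$ depends only on $\ZZZ(f)$, i.e.\ that $\DDD(b)=\pi^{-1}(\DDD_1(b))$ is saturated. The paper re-derives this saturation inline by taking two representatives $f\in\DDD(a)$, $f'\in\DDD(a')$ with $\ZZZ(f)=\ZZZ(f')$, while you cite the previously established identity $\pi^{-1}(\DDD_1(b))=\DDD(b)$ and use surjectivity of $\pi$; this is only a repackaging of the same argument.
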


\begin{proof}
Since $\DDD(a) \cap \DDD(a') = \DDD(aa')$, we have $\DDD_1(aa') \subset \DDD_1(a) \cap \DDD_1(a')$. Now let $x \in \DDD_1(a) \cap \DDD_1(a')$, so $x = \pi(f) = \pi(f')$ for some $f \in \DDD(a)$ and $f' \in \DDD(a')$. Then $f \sim f'$, which means $\ZZZ(f) = \ZZZ(f')$. Hence $f' \in \DDD(a)$, so $f' \in \DDD(a) \cap \DDD(a') = \DDD(aa')$ and therefore $x = \pi(f') \in \DDD_1(aa')$.
\end{proof}

\begin{proposition}\label{basis}
Let $X = \MMM(A)$ and $Y = \MMM_1(A)$.\\
{\upshape (a)} The space $Y$ is $T_0$ if and only if the ideal class group $\Cl(A)$ is torsion. \\
{\upshape (b)} If $A$ is a {\upshape PID} then $Y$ is a spectral space.
\end{proposition}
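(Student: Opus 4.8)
The plan is to treat the two parts separately; part (a) is quick once one notices a special property of the point $[e]$, while part (b) amounts to checking the three defining conditions of a spectral space, with the existence of generic points being the only substantive point.

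\emph{Part (a).} The crucial observation is that the image $[e]$ of the identity function $e \in \MMM(A)$ lies in every nonempty closed subset of $Y$: since $e(\fraka) = 0$ for all $\fraka \neq (1)$, the point $e$ belongs to every nonempty $\VVV(S) \subseteq X$, and the nonempty closed subsets of $Y$ are exactly the sets $\VVV_1(S) = \pi(\VVV(S))$. Now suppose $Y$ is $T_0$. Since $\#\Max(A) \geq 1$, there is a point $[f] \neq [e]$; because $[e] \in \overline{\{[f]\}}$ always, the $T_0$ axiom forces $[f] \notin \overline{\{[e]\}}$, and as $[f]$ is arbitrary, $\overline{\{[e]\}} = \{[e]\}$, so $[e]$ is a closed point. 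By Proposition \ref{points} (which remains valid for $[e] \in Y$, as noted above) this means $\Cl(A)$ is torsion. Conversely, if $\Cl(A)$ is torsion and $[f] \neq [g]$, then $\ZZZ(f) \neq \ZZZ(g)$, so after possibly swapping $f$ and $g$ we may pick $\frakp \in \ZZZ(f) \sm \ZZZ(g)$; since $[\frakp]$ has finite order $n$, we have $\frakp^n = (a)$ for some $a \in A^\ast$, whence $f((a)) = f(\frakp)^n = 0$ but $g((a)) = g(\frakp)^n \neq 0$, so $\DDD_1(a)$ is an open set containing $[g]$ but not $[f]$. Thus $Y$ is $T_0$.

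\emph{Part (b), the easy conditions.} Assume $A$ is a PID. Then $\Cl(A) = 0$ is torsion, so by part (a) the space $Y$ is $T_0$; equivalently, every irreducible closed subset of $Y$ has \emph{at most} one generic point. The sets $\DDD_1(a)$, $a \in A^\ast$, form a basis for $Y$, are closed under finite intersection by the preceding lemma ($\DDD_1(a) \cap \DDD_1(a') = \DDD_1(aa')$), and each is quasi-compact, being the continuous image under $\pi$ of the quasi-compact set $\DDD(a)$; taking $a$ to be a unit gives $Y = \DDD_1(a)$, so $Y$ itself is quasi-compact. Hence, to conclude that $Y$ is spectral, it remains only to show that every nonempty irreducible closed $Z \subseteq Y$ has a generic point.

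\emph{Part (b), generic points.} Write $Z = \VVV_1(S)$; then $\pi^{-1}(Z) = \VVV(S)$ is a nonempty irreducible closed subset of $X$ (by the argument in the proof of Proposition \ref{dim2}), so $\frakP := \III(\VVV(S))$ is ``prime'' in the sense recalled above, and $\frakP \neq A^\ast$ because $\VVV(S) \neq \varnothing$. Since $A$ is a PID, every nonzero nonunit factors into prime elements, and an induction on the number of prime factors, using that $\frakP$ is ``prime'' and closed under $A^\ast$-scaling, shows $\frakP = \{\, a \in A^\ast : p \mid a \text{ for some prime element } p \in \frakP \,\}$. Let $P$ be the set of prime elements contained in $\frakP$; it is a union of associate classes, so $\{(p) : p \in P\} \subseteq \Max(A)$ is well defined. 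Define $f \in X$ by $f((p)) = 0$ for $p \in P$ and $f(\frakq) = 1$ for all other $\frakq \in \Max(A)$, so $\ZZZ(f) = \{(p) : p \in P\}$ and therefore $\III(\{f\}) = \frakP$. Since $\pi$ is continuous and closed and $\overline{\{f\}} = \VVV(\III(\{f\}))$, we get
$$
\overline{\{[f]\}} \;=\; \pi\big(\VVV(\III(\{f\}))\big) \;=\; \pi\big(\VVV(S)\big) \;=\; Z,
$$
using $\III(\{f\}) = \frakP$ together with $\VVV(\frakP) = \VVV(\III(\VVV(S))) = \VVV(S)$. So $[f]$ is a generic point of $Z$, and by the $T_0$ property it is the unique one; hence $Y$ is a spectral space. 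I expect this last step to be the crux: everything else is bookkeeping with the already-established facts about $\VVV$, $\III$, $\pi$, and quasi-compactness, whereas here one must recognize that in a PID the ``prime'' subsets of $A^\ast$ are exactly the unions $\bigcup_{p \in P} pA^\ast$, which is what lets each irreducible closed set be realized as the closure of an explicit totally multiplicative function.
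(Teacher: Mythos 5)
Your proof is correct and follows essentially the same route as the paper: the torsion $\Rightarrow T_0$ direction is identical, the converse repackages the paper's argument by reducing to the closedness of $[e]$ and citing Proposition \ref{points}, and part (b) constructs, exactly as the paper does, an explicit totally multiplicative function whose closure in $\MMM(A)$ is the given irreducible closed set and pushes it forward along the closed continuous map $\pi$. The only (cosmetic) improvement is that you phrase the PID step via $\III(\VVV(S))$ being a union of sets $A^\ast p$, which is a slightly more careful formulation of the paper's claim that the elements of $S$ may be taken to be prime.
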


\begin{proof}
(a) Suppose $\Cl(A)$ is torsion and $[f] \neq [g]$ in $Y$. By interchanging $f$ and $g$, we may assume $\ZZZ(f) \neq \varnothing$, so there exists $\frakp \in \Max(A)$ such that 
$f(\frakp) = 0$ but $g(\frakp) \neq 0$. Since $\Cl(A)$ is torsion, $\frakp^n = (a)$ for some integer $n \geq 1$ and $a \in A^\ast$. Then $[f] \notin \DDD_1(a)$ but $[g] \in \DDD_1(a)$, which shows $Y$ is $T_0$. Now suppose $\Cl(A)$ is not torsion. As in the proof of Proposition \ref{points}, there is an
$f \in X$ such that $\ZZZ(f) \neq \ZZZ(e)$ and $f \in \overline{\{e\}}$. Then since $\pi$ is a closed map, any closed subset of $Y$ containing $[e]$ also contains $[f]$. As $[f] \neq [e]$, this shows $Y$ is not $T_0$.

(b) Suppose $A$ is a PID. If $S \subset A^\ast$ and $f \in X$, note that the closure of $\{f\}$ in $X$ is equal to
$\VVV(\III(\{f\})) = \VVV(S(f))$, where
$$
S(f) = \{a \in A^\ast : f((a)) = 0\} = \bigcup_{\frakp \in \ZZZ(f)} \frakp^\ast.
$$
Suppose $W = \VVV(S)$ is an irreducible, closed subset of $X$. We will show $W$ contains a generic point. Since $W$ is irreducible, using Proposition \ref{properties}, it follows that every element of $S$ is prime. Define $f_0 \in X$ by $f_0((p)) = 0$ for all $p \in S$ and $f_0((q)) = 1$ for all primes $q \notin S$, so $f_0 \in W$. Then $\overline{\{f_0\}} = \VVV(S(f_0))$, where $S(f_0) = \bigcup_{p \in S} A^\ast p$.
If $f \in W$ then $f((p)) = 0$ for all $p \in S$, so $f((a)) = 0$ for all $a \in S(f_0)$. Hence
$\overline{\{f_0\}} = W$ and $f_0$ is a generic point of $W$. 

Now, if $Z \subset Y$ is an irreducible, closed subset, then $\pi^{-1}(Z) \subset X$ is irreducible and closed by the proof of Proposition \ref{dim2}. If $\eta \in \pi^{-1}(Z)$ is a generic point, then $\pi(\eta) \in Z$ is a generic point since $\pi$ is closed. By part (a), this generic point is unique. We have seen that $\{\DDD_1(a) : a \in A^\ast\}$ is a basis of open sets in $Y$, closed under finite intersections by Lemma \ref{basis}, and each $\DDD_1(a) \cong \MMM_1(A_a)$ is quasi-compact. As $Y$ is quasi-compact, it is a spectral space.
\end{proof}

\begin{example}
Let $A = \ZZ[\sqrt{-5}]$, so $\#\Cl(A) = 2$, and let $X = \MMM(A)$. In $A$, factor $(3) = \frakp_+\frakp_{-}$ where $\frakp_{\pm} = (3, 1 \pm \sqrt{-5})$. Let $Z = \VVV(\{3\}) \subset X$, which is irreducible since $3$ is an irreducible element of $A$, and define $f \in X$ by $f(\frakp_+) = 0$ and $f(\frakq) = 1$ for all $\frakq \neq \frakp_+$, so $f \in Z$ and $\overline{\{f\}} = \VVV(\frakp_+^\ast)$. Similarly define $g \in X$ by $g(\frakp_{-}) = 0$ and $g(\frakq) = 1$ for all $
\frakq \neq \frakp_{-}$, so again $g \in Z$. However, $g \notin \VVV(\frakp_+^\ast)$ since, for example, $\frakp_2\frakp_+ = (1 + \sqrt{-5})$ with $\frakp_2 = (2, 1 + \sqrt{-5})$ prime and
hence $g((1 + \sqrt{-5})) \neq 0$. Thus $\overline{\{f\}} \subsetneq Z$, and similarly $\overline{\{g\}} \subsetneq Z$. If $h \in Z$ is arbitrary, then $\ZZZ(h)$ contains at least one of $\frakp_{\pm}$, and if $\frakp_+ \in \ZZZ(h)$, then $\{a \in A^\ast : f((a)) = 0\} \subset \{a \in A^\ast : h((a)) = 0\}$ and thus $\overline{\{h\}} \subset \overline{\{f\}} \subsetneq Z$. The same holds if
$\frakp_{-} \in \ZZZ(h)$, which shows $Z$ has no generic point. This also implies $\pi(Z) \subset \MMM_1(A)$ has no generic point, and in particular, $\MMM_1(A)$ is not spectral.
\end{example}

\begin{example}
Let $Y = \MMM_1(A)$ and suppose $\dim Y = 1$. Then $A$ is a discrete valuation ring and
$Y = \{[e], [f]\}$, where $e$ is the usual identity function and $f(\frakp) = 1$, where $\Spec(A) = \{(0), \frakp\}$. Let $Z = \Spec(A)$. There is an isomorphism of 
ringed spaces $F : (Z, \OOO_Z) \to (Y, \OOO_Y)$ given on points by $F(\frakp) = [e]$ and
$F((0)) = [f]$. The morphism of sheaves $\OOO_Y \to F_\ast\OOO_Z$ is the identity by definition.
\end{example}

Suppose $\varphi : A \to B$ is a homomorphism of Dedekind domains that is injective and quasi-integral. Let $X = \MMM(A)$, $X' = \MMM(B)$, $Y = \MMM_1(A)$, $Y' = \MMM_1(B)$, and let $\pi : X \to Y$ and $\pi' : X' \to Y'$ be the projections. Suppose $f, g \in X'$ and $f \sim g$. Then by Proposition \ref{zeros},
$$
\ZZZ(\varphi^\ast(f)) = \{\varphi^{-1}(\frakq) : \frakq \in \ZZZ(f)\} = \{\varphi^{-1}(\frakq) : \frakq \in \ZZZ(g)\} = \ZZZ(\varphi^\ast(g)),
$$
which means $\varphi^\ast(f) \sim \varphi^\ast(g)$. It follows that there is a continuous map
$\widetilde{\varphi}^\ast : Y' \to Y$ such that the diagram
$$
\xymatrix{
X' \ar[rr]^>>>>>>>>>>{\varphi^{\ast}} \ar[d]_{\pi'}  & & X \ar[d]^{\pi} \\
Y' \ar[rr]^>>>>>>>>>>{\widetilde{\varphi}^{\ast}} && Y
}
$$
commutes, that is, $\widetilde{\varphi}^{\ast}[f] = [\varphi^\ast(f)]$. Define a morphism of sheaves 
$$
\widetilde{\varphi}^{\#} = \pi_\ast(\varphi^{\#}) : \OOO_Y = \pi_\ast\OOO_X \to \pi_\ast(\varphi^\ast)_\ast\OOO_{X'} = (\widetilde{\varphi}^\ast)_\ast\OOO_{Y'}.
$$
Then
$$
\widetilde{\varphi}^{\#}(\DDD_{X, 1}(a)) : \Gamma(\DDD_{X, 1}(a), \OOO_Y) = A_a \to B_{\varphi(a)} = \Gamma(\DDD_{X, 1}(a), (\widetilde{\varphi}^\ast)_\ast\OOO_{Y'})
$$
is the homomorphism induced by $\varphi$. This defines a morphism
$(\widetilde{\varphi}^\ast, \widetilde{\varphi}^{\#}) : (Y', \OOO_{Y'}) \to (Y, \OOO_Y)$ of ringed spaces. 

Fix
$y' \in Y'$ and let $y = \widetilde{\varphi}^{\ast}(y')$. By (\ref{stalk2}) we have $\OOO_{Y', y'} = S_{y'}^{-1}B$, where
$$
S_{y'} = \{b \in B^\ast : \text{$f((b)) \neq 0$ for any $f \in y'$}\} = B \sm \bigcup_{\frakq \in \ZZZ(y')}\frakq.
$$
Similarly, $\OOO_{Y, y} = T_y^{-1}A$ where
$$
T_y = \{a \in A^\ast : \text{$g((a)) \neq 0$ for any $g \in y$}\} = A \sm \bigcup_{\frakp \in \ZZZ(y)}\frakp.
$$
Just as in Lemma \ref{stalk3}, there is a commutative diagram
$$
\xymatrix{
A \ar[rr]^>>>>>>>>>>>>>{\varphi} \ar[d]  & & B \ar[d] \\
T_y^{-1}A \ar[rr]^>>>>>>>>>>{\widetilde{\varphi}^{\#}_{y'}} && S_{y'}^{-1}B
}
$$
and the ideal in $S_{y'}^{-1}B$ generated by $\widetilde{\varphi}^{\#}_{y'}(\frakm)$ is proper for every maximal ideal $\frakm \subset T_y^{-1}A$.

We now come to our main result. Let $\mathbf{A}$ and $\mathbf{B}$ be the categories introduced at the beginning of this section. By what we just showed, $\MMM_1$ is a functor from 
$\mathbf{A}$ to $\mathbf{B}$.

\begin{theorem}\label{equivalence}
The functor $\MMM_1 : \mathbf{A} \to \mathbf{B}$ is fully faithful.
\end{theorem}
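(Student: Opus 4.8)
The plan is to treat faithfulness as essentially already established and to prove fullness by imitating the standard argument that $\Hom_{\mathrm{LRS}}(X,\Spec A)\cong\Hom_{\mathrm{Ring}}\bigl(A,\OOO_X(X)\bigr)$, with the properness condition on stalk maps in the definition of $\mathbf B$ playing the role of the local‑ring condition. For \emph{faithfulness}: as noted before the theorem, $\OOO_{\MMM_1(A)}(\MMM_1(A))=\OOO_{\MMM(A)}(\MMM(A))=A$, and for $\varphi\in\Hom_{\mathbf A}(A,B)$ the morphism $\MMM_1(\varphi)=(\widetilde\varphi^\ast,\widetilde\varphi^{\#})$ induces $\varphi$ itself on global sections; hence taking global sections is a left inverse of $\MMM_1$ on each $\Hom$‑set, so $\MMM_1$ is injective on morphisms.

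For \emph{fullness}, fix $A,B$ in $\mathbf A$ and a morphism $(\Phi,\Phi^{\#})\colon(\MMM_1(B),\OOO_{\MMM_1(B)})\to(\MMM_1(A),\OOO_{\MMM_1(A)})$ of $\mathbf B$, and put $\varphi=\Phi^{\#}(\MMM_1(A))\colon A\to B$, which lies in $\mathbf A$ by hypothesis. I would show $(\Phi,\Phi^{\#})=\MMM_1(\varphi)=(\widetilde\varphi^\ast,\widetilde\varphi^{\#})$. \emph{Step 1: $\Phi^{-1}(\DDD_1(a))=\DDD_1(\varphi(a))$ for all $a\in A^\ast$.} For ``$\supseteq$'': if some $g\in\DDD_1(\varphi(a))$ had $\Phi(g)\notin\DDD_1(a)$, then $a$ would lie in a maximal ideal $\frakm$ of $\OOO_{\MMM_1(A),\Phi(g)}$, and the commuting square relating $\Phi^{\#}_g$ to $\varphi$ and the canonical localizations (as in the proof of Lemma~\ref{stalk3}), together with properness of $\Phi^{\#}_g(\frakm)\OOO_{\MMM_1(B),g}$, would force $\varphi(a)$ into a proper ideal of $\OOO_{\MMM_1(B),g}$, contradicting $g\in\DDD_1(\varphi(a))$. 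For ``$\subseteq$'': compatibility of $\Phi^{\#}$ with restriction makes $\varphi(a)$ a unit in $\OOO_{\MMM_1(B)}\bigl(\Phi^{-1}(\DDD_1(a))\bigr)$; since this set is open, any of its points $g$ lies in some $\DDD_1(b)$ contained in it, and invertibility of $\varphi(a)$ in $B_b$ forces no prime of $\ZZZ(g)$ to divide $\varphi(a)$, i.e.\ $g\in\DDD_1(\varphi(a))$. Pushing the formula $(\varphi^\ast)^{-1}(\DDD(a))=\DDD(\varphi(a))$ to the quotient gives $(\widetilde\varphi^\ast)^{-1}(\DDD_1(a))=\DDD_1(\varphi(a))$ as well, so $\Phi$ and $\widetilde\varphi^\ast$ pull back every basic open identically.

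\emph{Step 2: $\Phi=\widetilde\varphi^\ast$ on points.} For $g\in\MMM_1(B)$ write $\OOO_{\MMM_1(A),\Phi(g)}=T^{-1}A$ and $\OOO_{\MMM_1(B),g}=S^{-1}B$ with $T=A\setminus\bigcup_{\frakp\in\ZZZ(\Phi(g))}\frakp$ and $S=B\setminus\bigcup_{\frakq\in\ZZZ(g)}\frakq$. Since $\Phi^{\#}_g$ is a ring homomorphism fitting in the commuting square with $\varphi$ and these two localizations, one gets $\varphi(T)\subseteq(S^{-1}B)^\times$, and for each $\frakp\in\ZZZ(\Phi(g))$ properness of $\varphi(\frakp)S^{-1}B$ together with prime avoidance in the Dedekind domain $B$ yields a $\frakq\in\ZZZ(g)$ with $\frakp=\varphi^{-1}(\frakq)$; thus $\ZZZ(\Phi(g))\subseteq\{\varphi^{-1}(\frakq):\frakq\in\ZZZ(g)\}$. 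Conversely, for $\frakq\in\ZZZ(g)$ the ideal $\varphi^{-1}(\frakq)$ is nonzero by quasi‑integrality of $\varphi$; by comparing with the image under $\Spec(B)\hookrightarrow\MMM_1(B)$ of the point corresponding to $\frakq$—whose stalk is the DVR $B_\frakq$, which prevents its image from having empty zero set—and using the specialization relation between $g$ and these points, one is led to $\varphi^{-1}(\frakq)\in\ZZZ(\Phi(g))$. Then $\ZZZ(\Phi(g))=\{\varphi^{-1}(\frakq):\frakq\in\ZZZ(g)\}=\ZZZ(\widetilde\varphi^\ast(g))$, and since a point of $\MMM_1(A)$ is determined by its set of prime zeros, $\Phi=\widetilde\varphi^\ast$. \emph{Step 3: $\Phi^{\#}=\widetilde\varphi^{\#}$.} Both are now morphisms $\OOO_{\MMM_1(A)}\to\Phi_\ast\OOO_{\MMM_1(B)}$, and on the basis $\{\DDD_1(a)\}$ each is a ring homomorphism $A_a\to B_{\varphi(a)}$ compatible with $\varphi$ and the canonical localizations; the unique such map is $x/a^n\mapsto\varphi(x)/\varphi(a)^n$, and a sheaf morphism is determined by its restrictions to a basis, so $\Phi^{\#}=\widetilde\varphi^{\#}$ and $(\Phi,\Phi^{\#})=\MMM_1(\varphi)$.

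I expect Step 2 to be the main obstacle. The topology on $\MMM_1(A)$ is too coarse to separate points—it need not even be $T_0$—so the conclusion of Step 1 that $\Phi$ and $\widetilde\varphi^\ast$ pull back basic opens identically does not by itself give $\Phi=\widetilde\varphi^\ast$; the extra rigidity must be extracted from the whole family of properness conditions on the stalk maps, in particular by playing the semi‑local stalks of $\MMM_1(B)$ off against the discrete valuation rings arising from $\Spec(B)$.
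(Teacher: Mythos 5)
Your overall strategy is the paper's: faithfulness via the global-sections retraction, and fullness by setting $\varphi=\Phi^{\#}(\MMM_1(A))$, showing $\Phi$ and $\widetilde{\varphi}^\ast$ have the same prime zero set at every point, and then identifying the sheaf maps by the uniqueness clause of the universal property of localization (you do this on the basic opens $\DDD_1(a)$, the paper on stalks; your Step 1 is extra but harmless and is what feeds your Step 3). The forward inclusion of your Step 2 is the paper's argument almost verbatim: properness of $\varphi(\frakp)S^{-1}B$ gives a maximal ideal $S^{-1}\frakq$ containing it, and contraction plus maximality give $\frakp=\varphi^{-1}(\frakq)$.

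The genuine divergence, and the gap, is the converse inclusion $\{\varphi^{-1}(\frakq):\frakq\in\ZZZ(g)\}\subset\ZZZ(\Phi(g))$. Your detour through the $\Spec(B)$-points $x_\frakq$ and specializations hinges on the assertion that the stalk at $x_\frakq$ being the DVR $B_\frakq$ ``prevents its image from having empty zero set.'' That does not follow from the hypotheses you invoke: the properness condition on stalk maps is vacuous when $\OOO_{\MMM_1(A),\Phi(x_\frakq)}=\mathrm{Frac}(A)$ (its only maximal ideal is $(0)$, and $(0)B_\frakq$ is proper), and quasi-integrality is assumed in $\mathbf B$ only for the map on global sections. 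What is needed is to propagate quasi-integrality from $\varphi$ to the stalk maps, using the commutative square (\ref{diagram2}) and injectivity of the localization $A\to T_y^{-1}A$; this is exactly the step the paper carries out (``since $\varphi$ is quasi-integral, so is $\Phi_x^{\#}$''). Once you have that ingredient, the excursion through $\Spec(B)$ and closures becomes unnecessary: at the original point $x$ itself you can contract the nonzero prime $S_x^{-1}\frakq$ under $\Phi^{\#}_x$ to get a nonzero prime $T_y^{-1}\frakp'$ of the stalk, and commutativity of (\ref{diagram2}) plus maximality of $\varphi^{-1}(\frakq)$ give $\varphi^{-1}(\frakq)=\frakp'\in\ZZZ(\Phi(x))$, which is the paper's argument. (Your remaining appeals --- that a specialization has the larger zero set, and that a prime disjoint from $T_y$ lies in $\ZZZ(y)$ --- reduce to the same localization facts the paper itself uses in Lemma~\ref{stalk3} and Proposition~\ref{stalk}, so there you are on par with the paper; the missing piece is specifically the stalk-level quasi-integrality.)
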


\begin{proof}
We need to show that for any objects $A$ and $B$ in $\mathbf{A}$, the map
$$
\MMM_1 : \Hom_{\mathbf{A}}(A, B) \to \Hom_{\mathbf{B}}(\MMM_1(B), \MMM_1(A)),
$$
given by $\MMM_1(\varphi) = (\widetilde{\varphi}^\ast, \widetilde{\varphi}^{\#})$, is a bijection.
Define 
$$
\Gamma: \Hom_{\mathbf{B}}(\MMM_1(B), \MMM_1(A)) \to \Hom_{\mathbf{A}}(A, B) 
$$
by $\Gamma(\Phi, \Phi^{\#}) = \Phi^{\#}(\MMM_1(A))$. If $\varphi : A \to B$ is a morphism in $\mathbf{A}$, then by construction, the homomorphism 
$$
\Gamma(\MMM_1(\varphi)) = \widetilde{\varphi}^{\#}(\MMM_1(A)) : \OOO_{\MMM_1(A)}(\MMM_1(A)) = A \to B = \OOO_{\MMM_1(B)}(\MMM_1(B))
$$
is equal to $\varphi$.

Now let $Y = \MMM_1(A)$ and $X = \MMM_1(B)$, and suppose $(\Phi, \Phi^{\#}) : (X, \OOO_X) \to 
(Y, \OOO_Y)$ is a morphism in $\mathbf{B}$. Let $\varphi = \Gamma(\Phi, \Phi^{\#}) : A \to B$.
We need to show $\MMM_1(\varphi) = (\Phi, \Phi^{\#})$. To show $\widetilde{\varphi}^{\ast} = \Phi$, it is enough to show $\ZZZ(\widetilde{\varphi}^{\ast}(x)) = \ZZZ(\Phi(x))$ for all $x \in X$.
Fix an $x \in X$ and let $y = \Phi(x)$. As above, $\OOO_{X, x} = S_x^{-1}B$, where 
$$
S_x = \{b \in B^\ast : \text{$f((b)) \neq 0$ for any $f \in x$}\} = B\sm\bigcup_{\frakq \in \ZZZ(x)}\frakq
$$
and $\OOO_{Y, y} = T_y^{-1}A$, where 
$$
T_y = \{a \in A^\ast : \text{$g((a)) \neq 0$ for any $g \in y$}\} = A \sm\bigcup_{\frakp \in \ZZZ(y)}\frakp.
$$ 
Also, there is a commutative diagram
\begin{equation}\label{diagram2}
\xymatrix{
A \ar[rr]^>>>>>>>>>>>>>{\varphi} \ar[d]_{\alpha}  & & B \ar[d]^{\beta} \\
T_y^{-1}A \ar[rr]^>>>>>>>>>>{\Phi^{\#}_x} && S_x^{-1}B
}
\end{equation}
where $\alpha$ and $\beta$ are the natural inclusions, and $\Phi^{\#}_x : \OOO_{Y, y} \to \OOO_{X, x}$ is the map on stalks induced by $\Phi^{\#}$. Since $\varphi$ is injective, 
so is $\Phi_x^{\#}$. Also, since $\varphi$ is quasi-integral, so is $\Phi_x^{\#}$.
To see this, suppose $S_x^{-1}B \neq \text{Frac}(B)$ and let $\frakQ \subset S_x^{-1}B$ be a nonzero prime ideal. Then $\frakQ = 
S_x^{-1}\frakq$ for some nonzero prime $\frakq \subset B$ and hence $\frakp = \varphi^{-1}(\frakq) \subset A$ is nonzero. Suppose $(\Phi_x^{\#})^{-1}(\frakQ) = (0)$. 
Then since $\alpha$ is injective, 
$$
\frakp = \varphi^{-1}(\beta^{-1}(\frakQ)) = \alpha^{-1}((\Phi_x^{\#})^{-1}(\frakQ)) = \alpha^{-1}(0)
= (0),
$$
a contradiction. If $S_x^{-1}B = \text{Frac}(B)$, the quasi-integral property is vacuously satisfied.

To show $\ZZZ(y) = \ZZZ(\widetilde{\varphi}^{\ast}(x))$, first suppose $\ZZZ(y) = \varnothing$, so $\OOO_{Y, y} = \text{Frac}(A)$. If $S_x^{-1}B \neq \text{Frac}(B)$ then since $\Phi^{\#}_x$ is quasi-integral, if $\frakQ \subset S_x^{-1}B$ is a nonzero prime, then $(\Phi^{\#}_x)^{-1}(\frakQ)$ is a nonzero prime in $\OOO_{Y, y}$, a contradiction. Hence $\OOO_{X, x} =
\text{Frac}(B)$, which implies $\ZZZ(x) = \varnothing$ and thus $\ZZZ(\widetilde{\varphi}^\ast(x)) = \varnothing$. Conversely, if $\ZZZ(\widetilde{\varphi}^\ast(x)) = \varnothing$ then
$\ZZZ(x) = \varnothing$ and $\OOO_{X, x} = \text{Frac}(B)$. Since $\Phi_x^{\#}$ is injective and by what we assumed about stalk maps,
if $\mathfrak{A} \subset T_y^{-1}A$ is a nonzero proper ideal, then the ideal in $S_x^{-1}B = \text{Frac}(B)$ generated by $\Phi^{\#}_x(\mathfrak{A})$ is nonzero and proper, a contradiction. 
Therefore $\OOO_{Y, y} = \text{Frac}(A)$ and $\ZZZ(y) = \varnothing$. In what follows, we may  assume
$\ZZZ(y)$ and $\ZZZ(\widetilde{\varphi}^{\ast}(x))$ are nonempty, which means $\OOO_{X, x}$ and $\OOO_{Y, y}$ are Dedekind domains.

Now suppose $\frakp \in \ZZZ(y)$. 
Then $g(\frakp) = 0$ for any $g \in y$ and hence $\frakp \cap T_y = \varnothing$. This implies
$\frakP = T_y^{-1}\frakp$ is a nonzero prime ideal of $T_y^{-1}A$. Let $\frakB$ be the ideal in $S_x^{-1}B$ generated by $\Phi_x^{\#}(\frakP)$. As $\Phi_x^{\#}$ is injective, $\frakB$ is nonzero, and by the assumption about stalk maps, $\frakB$ is a proper ideal. Therefore there is a nonzero prime ideal $\frakQ \subset S_x^{-1}B$ such that $\frakQ \mid \frakB$, and then
$\frakQ = S_x^{-1}\frakq$ for a nonzero prime $\frakq \subset B$ satisfying $\frakq \cap S_x = \varnothing$. This implies $\frakq \in \ZZZ(x)$. Then $\frakp' = \varphi^{-1}(\frakq)$ is a nonzero prime of $A$, and since (\ref{diagram2}) commutes, $\frakp \subset \frakp'$.
As $\frakp$ is maximal, $\frakp = \frakp' = \varphi^{-1}(\frakq)$, and since
$\ZZZ(\widetilde{\varphi}^\ast(x)) = \{\varphi^{-1}(\mathfrak{r}) : \mathfrak{r} \in \ZZZ(x)\}$, 
we have $\frakp \in \ZZZ(\widetilde{\varphi}^\ast(x))$.

Conversely, suppose $\frakp \in \ZZZ(\widetilde{\varphi}^\ast(x))$. Then $\frakp = \varphi^{-1}(\frakq)$ for some $\frakq \in \ZZZ(x)$, which implies $\frakq \cap S_x = \varnothing$. Hence
$\frakQ = S_x^{-1}\frakq$ is a nonzero prime ideal in $S_x^{-1}B$. Let $\frakP = (\Phi_x^{\#})^{-1}(\frakQ)$, which is a nonzero prime ideal in $T_y^{-1}A$. Then $\frakP = T_y^{-1}\frakp'$ for some nonzero prime ideal $\frakp' \subset A$ satisfying $\frakp' \cap T_y = \varnothing$. By the commutativity of (\ref{diagram2}), $\frakp \subset \frakp'$ and hence
$\frakp = \frakp'$. This implies $\frakp \cap T_y = \varnothing$, so $\frakp \in \ZZZ(y)$.
Therefore $\ZZZ(y) = \ZZZ(\widetilde{\varphi}^\ast(x))$, which shows $\Phi = \widetilde{\varphi}^\ast$ as continuous maps.

Finally, by the universal property of localization, there is a unique homomorphism across the bottom row making the diagram (\ref{diagram2}) commute. Since $y = \Phi(x) = \widetilde{\varphi}^\ast(x)$ and the stalk map $\widetilde{\varphi}_x^{\#}$ induced by 
$\widetilde{\varphi}^{\#}$ also makes the diagram commute, $\widetilde{\varphi}_x^{\#} = \Phi_x^{\#}$. Since this is true for all $x \in X$, the sheaf morphisms $\Phi^{\#}$ and $\widetilde{\varphi}^{\#}$ are equal.
\end{proof}

If $\mathbf{B}_0$ is the essential image of $\MMM_1$, the proof shows that a quasi-inverse of $\MMM_1 : \mathbf{A} \to \mathbf{B}_0$ is the functor
$\Gamma : \mathbf{B}_0 \to \mathbf{A}$ given on objects by $\Gamma(X, \OOO_X) = \OOO_X(X)$ and on morphisms by $$\Gamma((\Phi, \Phi^{\#}) : (X, \OOO_X) \to (Y, \OOO_Y)) = \Phi^{\#}(Y).$$

\section{Global spaces}

Having established the relationship between a Dedekind domain $A$ and the ``affine space" $\MMM_1(A)$, it is natural at this point to at least define and give some examples of a global analogue of $\MMM_1(A)$. Before making this definition, let us extend the notion of the local space to allow for dimension $0$. Let $k$ be a field and define $X = \MMM(k) = \MMM_1(k)$ to be the set of all totally multiplicative functions $I_k = \{(1)\} \to K$, so 
$\MMM_1(k) = \{f\}$, where $f((1)) = 1$. Give $X$ the only possible topology and define a sheaf of rings by $\OOO_X(X) = k$.

\begin{definition}
An \textit{affine arithmetic space} is a ringed space $(X, \OOO_X)$ isomorphic as a ringed space to $(\MMM_1(A), \OOO_{\MMM_1(A)})$ for some Dedekind domain or field $A$. A morphism between affine arithmetic spaces is a morphism in the category $\mathbf{B}$ defined in the previous section.
\end{definition}

Let $\mathbf{A}'$ be the category whose objects are Dedekind domains and fields and whose morphisms are injective, quasi-integral homomorphisms. Let $\mathbf{Aff}$ be the category of affine arithmetic spaces.

\begin{corollary}
The functor $\MMM_1 : \mathbf{A}' \to \mathbf{Aff}$ is an anti-equivalence of categories.
\end{corollary}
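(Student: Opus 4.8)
The plan is to show that $\MMM_1 : \mathbf{A}' \to \mathbf{Aff}$ is a contravariant functor which is essentially surjective and fully faithful; these together are exactly the assertion that it is an anti-equivalence. Essential surjectivity is immediate from the definition of $\mathbf{Aff}$: every affine arithmetic space is by definition isomorphic as a ringed space to $\MMM_1(A)$ for some Dedekind domain or field $A$, and any such $A$ is an object of $\mathbf{A}'$. Since a morphism of affine arithmetic spaces is by definition a morphism in $\mathbf{B}$, full faithfulness amounts to showing that for all objects $A$, $B$ of $\mathbf{A}'$ the map $\MMM_1 : \Hom_{\mathbf{A}'}(A,B) \to \Hom_{\mathbf{B}}(\MMM_1(B), \MMM_1(A))$ is a bijection, with inverse the global-sections functor $\Gamma$. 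I would split this into four cases according to whether $A$ and $B$ are Dedekind domains or fields.

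If both $A$ and $B$ are Dedekind domains, then $\Hom_{\mathbf{A}'}(A,B) = \Hom_{\mathbf{A}}(A,B)$ and the claim is precisely Theorem \ref{equivalence}. If $A$ is a field and $B$ is a Dedekind domain, both Hom-sets are empty: an injective, quasi-integral homomorphism $A \to B$ would force every nonzero prime of $B$ to contract to a nonzero prime of the field $A$, of which there are none, so $B$ would have no nonzero prime, contradicting that $B$ is a Dedekind domain; and any morphism $\MMM_1(B) \to \MMM_1(A)$ in $\mathbf{B}$ induces on global sections an injective, quasi-integral homomorphism $\OOO_{\MMM_1(A)}(\MMM_1(A)) = A \to \OOO_{\MMM_1(B)}(\MMM_1(B)) = B$, impossible for the same reason. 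If both $A$ and $B$ are fields, then $\MMM_1(A)$ and $\MMM_1(B)$ are one-point spaces with structure rings $A$ and $B$, and a morphism between them in $\mathbf{B}$ is the same data as a ring homomorphism $A \to B$ that is injective (quasi-integrality and the stalk condition being vacuous, since $(0)$ is the only maximal ideal of the field $A$); thus both Hom-sets coincide with $\{\varphi : A \to B \mid \varphi \text{ injective}\}$ and $\MMM_1$ is the identity on them.

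The remaining case, $A$ a Dedekind domain and $B$ a field, is the one requiring genuine work. Here $\MMM_1(B) = \{\ast\}$ with $\OOO_{\MMM_1(B)}(\ast) = B$. Given a morphism $(\Phi, \Phi^{\#}) : \MMM_1(B) \to \MMM_1(A)$ in $\mathbf{B}$, I would first show that $\Phi(\ast)$ must be the generic point $\eta$ of $\MMM_1(A)$, i.e. the unique point with $\ZZZ(\eta) = \varnothing$, which lies in every nonempty basic open $\DDD_1(a)$. Indeed, if $\ZZZ(\Phi(\ast)) \neq \varnothing$ then by the stalk computation underlying Proposition \ref{stalk} together with (\ref{stalk2}), the stalk $\OOO_{\MMM_1(A),\Phi(\ast)}$ is a proper subring of $\text{Frac}(A)$ having fraction field $\text{Frac}(A)$, hence not a field, so it has a nonzero maximal ideal $\frakm$; the stalk condition forces $\Phi^{\#}_{\ast}(\frakm)$ to generate a proper, hence zero, ideal of the field $B$, so $\frakm \subseteq \ker \Phi^{\#}_{\ast}$; but the composite $A \to \OOO_{\MMM_1(A),\Phi(\ast)} \xrightarrow{\Phi^{\#}_{\ast}} B$ equals the injective homomorphism $\Phi^{\#}(\MMM_1(A))$, while $\frakm$ contracts to a nonzero ideal of $A$, a contradiction. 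Hence $\Phi(\ast) = \eta$, so $\OOO_{\MMM_1(A),\eta} = \text{Frac}(A)$ by Proposition \ref{stalk}(a) and (\ref{stalk2}), and $\Phi^{\#}$ is determined on the basis $\{\DDD_1(a)\}$ by $\varphi := \Phi^{\#}(\MMM_1(A)) : A \to B$ through the universal property of the localizations $\OOO_{\MMM_1(A)}(\DDD_1(a)) = A_a$ (note $\varphi(a) \in B^{\times}$ since $\varphi$ is injective and $a \neq 0$). Conversely, any injective $\varphi : A \to B$ yields such a morphism: set $\Phi(\ast) = \eta$ and let $\Phi^{\#}(\DDD_1(a)) : A_a \to B$ be $x/a^n \mapsto \varphi(x)/\varphi(a)^n$; these maps are compatible with restriction, hence glue to a sheaf morphism $\OOO_{\MMM_1(A)} \to \Phi_{\ast}\OOO_{\MMM_1(B)}$, and $(\Phi,\Phi^{\#})$ lies in $\mathbf{B}$ because $\Phi^{\#}(\MMM_1(A)) = \varphi$ is injective (quasi-integrality vacuous) and the stalk map $\text{Frac}(A) \to B$ is a homomorphism of fields, for which the stalk condition is automatic. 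These two constructions are mutually inverse, giving the bijection. Combining the four cases shows $\MMM_1$ is fully faithful, hence, with essential surjectivity, an anti-equivalence, with quasi-inverse $\Gamma$ sending $(X,\OOO_X) \mapsto \OOO_X(X)$ and a morphism to its effect on global sections. The main obstacle is exactly this Dedekind-to-field case: showing the underlying map of any morphism $\MMM_1(B) \to \MMM_1(A)$ is forced to hit the generic point, and that the entire sheaf morphism is then recovered from the induced map on global sections.
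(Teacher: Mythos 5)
Your proposal is correct and takes essentially the same route as the paper: the only substantive case is a morphism $\MMM_1(k) \to \MMM_1(A)$ with $k$ a field, where both you and the paper use the stalk condition (an ideal of a field generated by the image of a maximal ideal must be zero) together with injectivity of the global-sections map to force the image point to be the generic point with stalk $\mathrm{Frac}(A)$, and then recover the whole morphism from $\varphi = \Phi^{\#}(\MMM_1(A))$ via the universal property of localization. Your explicit four-case decomposition, including the empty-Hom check when $A$ is a field and $B$ a Dedekind domain, is just a more detailed spelling-out of the paper's reduction to the case $B = k$.
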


\begin{proof}
We just need to check that the map
$$
\MMM_1 : \Hom_{\mathbf{A}'}(A, B) \to \Hom_{\mathbf{Aff}}(\MMM_1(B), \MMM_1(A))
$$
is a bijection when $A$ or $B$ is a field. It is injective by the same argument as in the first paragraph of the proof of Theorem \ref{equivalence}. If $A$ is a field and $\varphi : A \to B$ is an injective,
quasi-integral homomorphism, then $\varphi^{-1}(\frakq) \neq (0)$ for all nonzero primes $\frakq \subset B$, which forces $B$ to also be a field. Hence, we may assume $B = k$ is a field. Let $X = \MMM_1(k) = \{f\}$ and $Y = \MMM_1(A)$. For any morphism $\varphi : A \to k$ in $\mathbf{A}'$, we have $\varphi^\ast(f)(\fraka) = f(k) = 1$ for all $\fraka \in I_A$, which means $\varphi^\ast(f) = \eta$ is the generic point of $Y$. 

Suppose $(\Phi, \Phi^{\#}) : X \to Y$ is a morphism in $\mathbf{Aff}$, so $\Phi^{\#}(Y) : A \to k$ is injective and quasi-integral. Then the stalk homomorphism $\Phi^{\#}_f : \OOO_{Y, \Phi(f)} \to \OOO_{X, f} = k$ is also injective. If $\frakm \subset \OOO_{Y, \Phi(f)}$ is a maximal ideal, then the ideal generated by $\Phi^{\#}_f(\frakm)$ in $k$ is proper by assumption, so it is zero. This gives $\frakm = (0)$, which means $\OOO_{Y, \Phi(f)} = \text{Frac}(A)$. Therefore $\Phi(f) = \eta$ and $\Phi = \varphi^\ast$, where $\varphi = \Phi^{\#}(Y)$.
 \end{proof}

\begin{definition}
An \textit{arithmetic space} is a ringed space $(X, \OOO_X)$ such that there is an open cover $\{U_i\}$ of $X$ with each $(U_i, \OOO_X|_{U_i})$ an affine arithmetic space. We call such an open cover an \textit{affine open cover} of $X$. A morphism of arithmetic spaces is a morphism of ringed spaces satisfying the same conditions as a morphism of affine arithmetic spaces.
\end{definition}

\begin{example}
Let $(X, \OOO_X)$ be an arithmetic space and $U \subset X$ an open subset. Then $(U, \OOO_X|_U)$ is an arithmetic space. To see this, let $x \in U$. Then $x \in V$ for some affine open set $V \subset X$, so $V \cong \MMM_1(A)$ for some $A$. We have seen that the open sets $\DDD_1(a) \cong \MMM_1(A_a)$
form a basis for the topology on $V$, where $a$ ranges over $A^\ast$, so there is such an open set with $x \in \DDD_1(a) \subset U \cap V$. This shows $U$ has an affine open cover. Additionally, this shows that the affine open subsets of an arithmetic space form a basis for its topology.
\end{example}

\begin{example}
Let $A$ and $B$ be objects of $\mathbf{A}'$ and set $X_1 = \MMM_1(A)$ and $X_2 = \MMM_1(B)$. Define
a topological space $X = X_1 \coprod X_2$ and a sheaf of rings on $X$ by
$$
\OOO_X(U) = \OOO_{X_1}(U \cap X_1) \times \OOO_{X_2}(U \cap X_2),
$$
so $\OOO_X|_{X_i} = \OOO_{X_i}$. Then $(X, \OOO_X)$ is an arithmetic space with affine open cover $\{X_1, X_2\}$. However, $X$ is not affine: if $X \cong \MMM_1(C)$ then 
$\OOO_X(X) \cong C$, but $\OOO_X(X) = A \times B$ is not a domain.
\end{example}

\begin{example}
Let $A$ be a PID which is not local and let $X = \MMM_1(A) \sm \{[e]\}$, where $[e] \in \MMM_1(A)$ is the closed point. Then $X \subset \MMM_1(A)$ is open, hence an arithmetic space. The space $X$ is not affine. To see this, note that
$$
X = \bigcup_{p \in A \text{ prime}} \DDD_1(p)
$$
implies
$$
\OOO_X(X) = \bigcap_{p \in A \text{ prime}} A_p = A
$$
since $A$ has at least two non-associate prime elements. It follows that if $X$ is affine then the inclusion $X \to \MMM_1(A)$ is an isomorphism, which is false since $[e]$ is not in the image. 

If $A$ is a discrete valuation ring then $X = \MMM_1(A) \sm \{[e]\} = \{\eta\}$, where $\eta$ is the generic point. Hence $\OOO_X(X) = A_p = \text{Frac}(A)$ for any prime element $p \in A$, which shows $X \cong \MMM_1(\text{Frac}(A))$ is affine.
\end{example}

\end{document}